\documentclass{article}
\usepackage{amsfonts}
\usepackage{graphicx} % Required for inserting images
\usepackage{tikz-cd}
\title{\textbf{Calabi--Yau Hyperbolicity}}
\author{ABDELOUAHAB KHELIFATI}
\date{}
\usepackage{graphicx} 
\usepackage{booktabs}
\usepackage[english]{babel}
\usepackage{amsmath}
\usepackage{amssymb}
\usepackage{amsfonts}
\usepackage{amsthm}
\usepackage{mathrsfs}
\usepackage{amscd}
\usepackage{enumitem}
\usepackage{extpfeil}
\usepackage{extarrows}
\usepackage{musicography}
\usepackage[utf8]{inputenc}
\usepackage[T1]{fontenc}
\usepackage[parfill]{parskip}
\usepackage{hyperref}
\hypersetup{colorlinks=true, linkcolor=blue, citecolor=red, urlcolor=blue
}
\usepackage{lipsum}
\usepackage{mathtools}
\newcommand{\longdashrightarrow}{%
  \mathrel{\raisebox{0.2ex}{%
    \scalebox{1.5}[1]{$\dashrightarrow$}}}}
\usepackage[all]{xy}

\usepackage{makeidx}

\renewcommand{\thesubsection}{\arabic{section}.\arabic{subsection}}
\usepackage{fullpage}
\usepackage{amssymb, amsmath, amsthm, amscd}
\usepackage{titlesec}
\titleformat{\subsection}
  {\normalfont\Large\bfseries}
  {\thesubsection}{1em}{}
\numberwithin{equation}{section}

\theoremstyle{plain}
\newtheorem{theorem}{Theorem}[section]
\newtheorem{dfn}[theorem]{Definition}
\newtheorem{cl}[theorem]{Claim}
\newtheorem{thm}[theorem]{Theorem}
\newtheorem*{thmA}{Theorem A}
\newtheorem*{thmB}{Theorem B}
\newtheorem*{thmC}{Theorem C}
\newtheorem*{thmD}{Theorem D}
\newtheorem*{thmE}{Theorem E}
\newtheorem{prop}[theorem]{Proposition}
\newtheorem{lem}[theorem]{Lemma}
\newtheorem{cor}[theorem]{Corollary}
\newtheorem{conj}[theorem]{Conjecture}
\theoremstyle{definition}

\newtheorem{qst}[theorem]{Question}
\newtheorem{rmk}[theorem]{Remark}
\newtheorem{ex}[theorem]{Examples}
\newcommand{\B}{\mathbb{B}}
\newcommand{\C}{\mathbb{C}}

\newcommand{\T}{\mathbb{T}}
\newcommand{\Hyp}{\mathbb{H}}
\newcommand{\CP}{\mathbb{P}}
\newcommand{\R}{\mathbb{R}}
\newcommand{\Sp}{\mathbb{S}}
\newcommand{\N}{\mathbb{N}}
\newcommand{\Z}{\mathbb{Z}}

\begin{document}

\maketitle

\vspace{5ex}

\large{\textbf{Abstract.}} The goal of this paper is to introduce a new notion of hyperbolicity for compact complex manifolds with trivial canonical bundle, called \textit{Calabi--Yau hyperbolicity}. We investigate its fundamental properties and establish a characterization in terms of the fundamental group. In particular, we show that no compact Kähler Calabi--Yau manifold is Calabi--Yau hyperbolic, highlighting the genuinely non-Kähler nature of this notion. We also investigate different techniques for constructing examples of such manifolds. Finally, we examine the relationship between Calabi--Yau hyperbolicity and Kobayashi hyperbolicity in the examples considered, highlighting obstructions to the latter in the Calabi--Yau setting.
\section{Introduction}
\hspace{2pt} One of the first systematic notions of hyperbolicity for complex manifolds is the one introduced by S. Kobayashi in 1967 through his intrinsic pseudodistance attached to the complex manifold in question~\cite{kobayashi1967intrinsic}. A complex manifold is called \textbf{Kobayashi hyperbolic} when this pseudodistance is a genuine distance. In particular, Kobayashi hyperbolicity yields the absence of nonconstant entire curves. By Brody's theorem~\cite[Theorem 4.1]{brody1978compact}, the converse is also true for \textbf{compact} complex manifolds.

\hspace{2pt} The study of hyperbolicity has, however, a much longer history, going back to the small Picard theorem and to the hyperbolicity of compact Riemann surfaces of genus $g\geq2$, as follows from Liouville's theorem and the uniformization theorem. These classical examples naturally led to the study of higher-dimensional compact complex manifolds with positive canonical bundle. In the projective setting, this connection is reflected in the Kobayashi conjecture~\cite[Conjecture 7.2]{lang1986hyperbolic}:
\begin{conj}$($\textbf{Kobayashi ’70}$)$
Let $X$ be a compact Kähler manifold which is Kobayashi hyperbolic. Then $K_X$ is ample.
\label{1}
\end{conj}
\hspace{2pt} In particular, M. Gromov's notion of \textbf{Kähler hyperbolicity}~\cite[Definition 0.3]{gromov1991kahler} provides a very special class of compact Kähler Kobayashi hyperbolic manifolds for which the Kobayashi conjecture holds: the canonical bundle of a compact Kähler hyperbolic manifold is ample~\cite[Theorem 2.11]{chen2018compact}. This notion is defined by the existence of a Kähler form whose pullback to the universal cover admits a bounded primitive with respect to the lifted Kähler metric. This condition has strong consequences for the geometry and the topology of the manifold. In particular, the universal cover of a compact Kähler hyperbolic manifold is \textbf{open at infinity}~\cite[Definition II.5]{sullivan1976cycles}.

\hspace{2pt} In this paper, we explore a different direction by asking whether a compact complex manifold with trivial canonical bundle can nevertheless exhibit hyperbolic behavior at the level of the large-scale geometry of its universal cover. More precisely, we introduce a notion of \textbf{Calabi--Yau hyperbolicity} based on the existence of a bounded primitive for the volume form induced by a holomorphic volume form on the universal cover.

\hspace{2pt} Our approach thus seeks to understand which geometric and topological features can give rise to hyperbolic behavior even when the canonical bundle is trivial. A recent preprint by OpenAI~\cite{OAI-Kobayashi-2026} claims to establish the Kobayashi conjecture for compact Kähler manifolds in full generality. This would imply that no compact Kähler manifold with trivial canonical bundle is Kobayashi hyperbolic. This motivates the study of alternative notions of hyperbolicity that remain meaningful when the canonical bundle is trivial, particularly in the non-Kähler setting.

\hspace{2pt} A related perspective appears in the recent work of P. Li~\cite{li2026topological}. Among other results, P. Li proves that the fundamental group of a Calabi--Yau manifold cannot be isomorphic to that of a Kähler hyperbolic manifold~\cite[Theorem 2.8]{li2026topological}. This result illustrates how the topology of Calabi--Yau manifolds can obstruct certain forms of geometric hyperbolicity. Our approach is different: rather than requiring a Kähler structure, we study a condition on the holomorphic volume form that captures large-scale hyperbolic behavior and allows for genuinely non-Kähler examples.

\hspace{2pt} We first establish basic properties of Calabi--Yau hyperbolicity, including a characterization in terms of the fundamental group. In particular, we show:
\begin{thmA}[\textbf{Characterization of Calabi--Yau hyperbolic manifolds}] Let $X$ be a compact complex manifold with $K_X\simeq\mathcal{O}_X$. Then $X$ is Calabi--Yau hyperbolic if and only if $\pi_1(X)$ is non-amenable. In particular, no compact Kähler Calabi--Yau manifold is Calabi--Yau hyperbolic.
\end{thmA}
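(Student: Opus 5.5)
The plan is to reduce the statement to a classical coarse-geometric dichotomy for top-degree forms on the universal cover $\widetilde X$ of a compact manifold. Fix a holomorphic volume form $\Omega$ trivializing $K_X$ and set $dV_\Omega = i^{n^2}\Omega\wedge\overline{\Omega}$, with $n=\dim_\C X$. This is a smooth, strictly positive real $2n$-form.

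Fix any Hermitian metric $g$ on $X$, with Riemannian volume form $\mathrm{vol}_g$. Compactness of $X$ gives $dV_\Omega = f\,\mathrm{vol}_g$ with $0<c\le f\le C$. These bounds persist after lifting to $(\widetilde X,\tilde g)$, which has bounded geometry and is quasi-isometric to $\pi_1(X)$ by Švarc--Milnor. Calabi--Yau hyperbolicity asks that $\widetilde{dV_\Omega}=d\eta$ with $\eta$ a $\tilde g$-bounded $(2n-1)$-form. I would use Brooks' theorem: $\pi_1(X)$ is amenable if and only if the Cheeger constant $h(\widetilde X)$ vanishes, i.e.\ if and only if there are compact domains $D_k\subset\widetilde X$ with smooth boundary and $\mathrm{area}(\partial D_k)/\mathrm{vol}(D_k)\to 0$.

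\emph{Calabi--Yau hyperbolic $\Rightarrow$ non-amenable.} Suppose $\widetilde{dV_\Omega}=d\eta$ with $\|\eta\|_\infty\le A$. For any Følner domain $D_k$, Stokes gives
\[
c\,\mathrm{vol}(D_k)\le\int_{D_k}\widetilde{dV_\Omega}=\int_{\partial D_k}\eta\le A\,\mathrm{area}(\partial D_k).
\]
This contradicts $\mathrm{area}(\partial D_k)/\mathrm{vol}(D_k)\to0$. Hence $h(\widetilde X)>0$, and $\pi_1(X)$ is non-amenable by Brooks.

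\emph{Non-amenable $\Rightarrow$ Calabi--Yau hyperbolic.} This is the main obstacle: we must produce a bounded primitive, not just rule one out. I would use the theorem of Gromov/Sullivan that on a bounded-geometry manifold with positive Cheeger constant, every bounded top-degree form is $d$ of a bounded form. Applied to $\tilde f\,\mathrm{vol}_{\tilde g}$, this gives the result.

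If a self-contained argument is preferred, I would discretize. First choose a fundamental domain and a partition of unity pulled back from $X$. This reduces the problem to finding a bounded $1$-chain (a ``flow'') on the Cayley graph of $\pi_1(X)$ whose boundary is a prescribed bounded $0$-chain with positive coefficients. Such a flow exists because non-amenability is equivalent to the vanishing of uniformly finite homology $H_0^{uf}(\pi_1(X))$ (Block--Weinberger). The existence of the bounded flow itself comes from Hahn--Banach or max-flow/min-cut duality against the linear isoperimetric inequality. Second, turn the flow back into a bounded $(2n-1)$-form. Correct it locally on each translate of the fundamental domain by solving $d\beta=\text{(exact, compactly supported, mean-zero)}$ with uniform bounds. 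The uniform bounds hold because only finitely many model problems occur, all lifted from the compact $X$. The delicate point I expect is keeping these local corrections uniformly bounded and compatible on overlaps. Equivariance and compactness of $X$ should handle it.

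\emph{Kähler case.} If $X$ is compact Kähler with $K_X$ trivial, Yau's theorem gives a Ricci-flat Kähler metric. The Beauville--Bogomolov decomposition then shows that a finite étale cover of $X$ is a product of a complex torus with simply connected factors. Hence $\pi_1(X)$ is virtually abelian, so it is amenable. By the equivalence above, $X$ is not Calabi--Yau hyperbolic.
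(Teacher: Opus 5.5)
Your proposal is correct and follows essentially the same route as the paper. Both arguments rest on Brooks' theorem, on the Sullivan--Gromov equivalence (on the bounded-geometry cover $\widetilde X$) between a positive Cheeger constant and a $\widetilde d$(bounded) volume form, and on the Beauville--Bogomolov decomposition to show that $\pi_1(X)$ is virtually abelian in the Kähler case. The only difference is minor. The paper absorbs the density of $\nu_\Omega$ by noting that $H^{2n}_{DR}(X,\R)$ is one-dimensional and that any form cohomologous to a $\widetilde d$(bounded) form is again $\widetilde d$(bounded), so it only needs to treat the Riemannian volume form. You instead handle the density directly, using Stokes for one direction and the stronger statement about arbitrary bounded top-degree forms for the other; this extra strength is not needed, since the density is lifted from the compact $X$.
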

\hspace{2pt} Thus, although the definition is inspired by Kähler hyperbolicity, the resulting notion has a different geometric scope and leads naturally to the study of non-Kähler manifolds.

\hspace{2pt} The preceding discussion leads to the following result for Calabi--Yau manifolds in Fujiki class~$\mathcal{C}$, which follows from the recent preprint~\cite{OpenAIAbelianitySpecialKahler2026} claiming a proof of \textbf{Campana's abelianity conjecture}~\cite[Conjecture 7.1]{campana2004orbifolds} in all dimensions.
\begin{thmB}
Let $X$ be a compact Calabi--Yau manifold in Fujiki class~$\mathcal{C}$. Then $\pi_1(X)$ is virtually abelian, and hence amenable. In particular, $X$ is not Calabi--Yau hyperbolic.
\end{thmB}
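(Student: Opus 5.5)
The proof has two steps: a topological statement about Calabi--Yau manifolds in Fujiki class~$\mathcal{C}$, followed by Theorem~A. We work with a compact complex manifold $X$ with $K_X\simeq\mathcal{O}_X$ that is bimeromorphic to a compact Kähler manifold.

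\textbf{Step 1: $X$ is special in Campana's sense.} Choose a Kähler modification $\mu\colon \tilde X\to X$, that is, a compact Kähler manifold $\tilde X$ with a bimeromorphic holomorphic map $\mu$. The Kodaira dimension is a bimeromorphic invariant, so $\kappa(\tilde X)=\kappa(X)=0$. Campana showed that compact Kähler manifolds with $\kappa=0$ are special: a non-special manifold admits a fibration of general type, and its Kodaira dimension is then bounded below by the positive Kodaira dimension of the orbifold base, which is impossible when $\kappa=0$. We cite this result (Campana, \emph{Orbifolds, special varieties and classification theory}, Theorem~5.1) rather than reprove it. Specialness is also a bimeromorphic notion, so $\tilde X$ is special.

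\textbf{Step 2: transfer of the fundamental group and abelianity.} The fundamental group is a bimeromorphic invariant of compact complex manifolds: the indeterminacy loci have real codimension at least $4$, and exceptional fibres of modifications between smooth spaces are simply connected. Hence $\pi_1(X)\simeq\pi_1(\tilde X)$. We apply the abelianity conjecture for special compact Kähler manifolds, as established in~\cite{OpenAIAbelianitySpecialKahler2026}, to $\tilde X$. It gives that $\pi_1(\tilde X)$, and hence $\pi_1(X)$, is virtually abelian.

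\textbf{Step 3: amenability and conclusion.} Abelian groups are amenable. Amenability passes to finite-index overgroups: an invariant mean on the finite-index subgroup can be averaged over coset representatives. So $\pi_1(X)$ is amenable, and by Theorem~A, $X$ is not Calabi--Yau hyperbolic.

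\textbf{Main obstacle.} The delicate point is Steps~1--2, namely legitimately moving from the non-Kähler $X$ in class~$\mathcal{C}$ to a Kähler model. This requires both $\kappa(X)=0$ implying specialness and the bimeromorphic invariance of $\pi_1$. The second fact I would prove via resolution of the indeterminacy of $X\dashrightarrow\tilde X$ by a sequence of blow-ups with smooth centers, none of which changes $\pi_1$. Everything else reduces to citing the abelianity result and Theorem~A.
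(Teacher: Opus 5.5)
Your proposal is correct and follows essentially the same route as the paper. Both arguments pass to a Kähler modification, use $\kappa=0$ and Campana's Theorem~5.1 to get specialness, apply the claimed abelianity result, transfer $\pi_1$ back via invariance under modifications, and conclude with Theorem~A; the only difference is that the paper proves the $\pi_1$-invariance by a direct codimension argument (removing a complex-codimension-$\geq 2$ set from $X$ does not change $\pi_1$, the complement of the exceptional set in the Kähler model surjects on $\pi_1$, and a diagram chase finishes), so neither simple connectedness of exceptional fibres nor a blow-up domination argument is needed.
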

\hspace{2pt} This leads us to ask whether a similar obstruction to Calabi--Yau hyperbolicity holds for the broader class of compact Calabi--Yau manifolds satisfying the $\partial\bar\partial$-property. More precisely, we formulate the following conjecture.
\begin{conj}[\textbf{Fundamental groups of Calabi--Yau $\partial\bar\partial$-manifolds}]
Let $X$ be a compact Calabi--Yau $\partial\bar\partial$-manifold. Then $\pi_1(X)$ is amenable. In particular, $X$ is not Calabi--Yau hyperbolic.
\end{conj}
\hspace{2pt} In Section \S\ref{3.2}, we present the following examples of Calabi--Yau hyperbolic manifolds:
\begin{enumerate}
    \item[\textbf{(\textit{a})}] Compact quotients $X=G/\Gamma$, where $G$ is a simply connected semisimple complex Lie group and $\Gamma\subset G$ is a cocompact lattice (called \textbf{Yachou manifolds}~\cite{yachou1998varietes}).
    \item[\textbf{(\textit{b})}] Generalized Calabi--Gray manifolds $X$ associated with a hyperkähler manifold $M$ and a pair $(Y,\varphi)$, as constructed by T. Fei~\cite{fei2016construction}. In this case, $X$ is Calabi--Yau hyperbolic if and only if $\pi_1(Y)$ is non-amenable.
\end{enumerate}
\hspace{2pt} We also investigate a geometric consequence of Calabi--Yau hyperbolicity in terms of higher-dimensional holomorphic maps. Motivated by the classical implication from Kähler hyperbolicity to Kobayashi hyperbolicity, we consider the notion of \textbf{\textit{n}-cyclic hyperbolicity}, defined by the nonexistence of non-degenerate holomorphic maps from $\C^n$ into $X$ satisfying a suitable \textit{subexponential growth} condition~\cite[Definition~2.3]{marouani2023balanced}.
\begin{thmC}
Every compact complex manifold $X$ with non-amenable fundamental group is $n$-cyclically hyperbolic, where $n=\dim_\C X$. In particular, every Calabi--Yau hyperbolic manifold is $n$-cyclically hyperbolic. \label{C}
\end{thmC}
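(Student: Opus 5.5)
The plan is to reduce Theorem C to two ingredients: a large-scale statement about non-amenable groups, and a Stokes-type argument in the spirit of Marouani's treatment of $n$-cyclic hyperbolicity~\cite{marouani2023balanced}. Fix an arbitrary Hermitian metric $\omega$ on $X$, let $\pi\colon\widetilde X\to X$ be the universal cover, and set $\tilde\omega=\pi^*\omega$. The whole argument runs through the top-degree form $\tilde\omega^n$.

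\emph{Step 1: a bounded primitive.} I would first show that if $\pi_1(X)$ is non-amenable, then $\tilde\omega^n=d\eta$ for some smooth $(2n-1)$-form $\eta$ on $\widetilde X$ with $\sup_{\widetilde X}|\eta|_{\tilde\omega}<\infty$. This is the same mechanism as in the proof of Theorem A, so I would reuse it verbatim, since the argument only uses that the volume form is $\pi_1$-invariant and the cover is cocompact.
\begin{itemize}
\item Since $\pi_1(X)$ acts cocompactly on $\widetilde X$, the space $(\widetilde X,\tilde\omega)$ is quasi-isometric to a Cayley graph of $\pi_1(X)$.
\item By Følner's criterion combined with Brooks' theorem, non-amenability gives a linear isoperimetric inequality, that is, a positive Cheeger constant for $\widetilde X$.
\item By Sullivan--Gromov duality, a positive Cheeger constant is equivalent to every bounded top-degree form, in particular $\tilde\omega^n$, being $d$ of a bounded form. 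Concretely, one solves $\operatorname{div}V=1$ with $V$ a bounded vector field and takes $\eta=\iota_V\tilde\omega^n$.
\end{itemize}
The second sentence of Theorem C then follows at once: by Theorem A, a Calabi--Yau hyperbolic manifold has non-amenable $\pi_1$.

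\emph{Step 2: a Stokes inequality for entire maps.} Suppose, for contradiction, that $f\colon\C^n\to X$ is holomorphic, non-degenerate at some point, and of subexponential growth in the sense of~\cite[Definition~2.3]{marouani2023balanced}. Since $\C^n$ is simply connected, $f$ lifts to $\tilde f\colon\C^n\to\widetilde X$, and
\[
f^*\omega^n=\tilde f^*\tilde\omega^n=d(\tilde f^*\eta).
\]
Let $V(r)=\int_{B_r}f^*\omega^n$. Because $f^*\omega^n\ge 0$ and $f$ is non-degenerate somewhere, $V(r)>0$ for all $r$ beyond some $r_0$. Stokes' theorem on the ball $B_r$ gives
\[
V(r)=\int_{S_r}\tilde f^*\eta\le \|\eta\|_\infty\int_{S_r}\bigl|\Lambda^{2n-1}d\tilde f\bigr|\,d\sigma_r,
\]
and the integrand equals $|\Lambda^{2n-1}df|$ pointwise because $\pi$ is a local isometry. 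Thus the volume on balls is controlled linearly by a boundary quantity of $f$. The subexponential growth hypothesis is designed precisely to bound this boundary quantity in terms of $V$ and $V'$.

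\emph{Step 3: contradiction.} Combining the Stokes bound with the growth hypothesis, typically via Cauchy--Schwarz on $S_r$ against $V'(r)=\int_{S_r}|\Lambda^{n}df|^2\,d\sigma_r$ (after normalisation), should give a differential inequality of the form
\[
V(r)\le C\,V'(r)^{\alpha}\,\rho(r)
\]
with $\rho$ of subexponential size. Integrating this forces $V(r)$ to grow at least exponentially, contradicting the subexponential growth condition. Hence no such $f$ exists, and $X$ is $n$-cyclically hyperbolic.

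\emph{Main obstacle.} Step 2 is formal and Step 1 is classical; the delicate point is Step 3. There I must match the boundary term produced by Stokes exactly with the quantity controlled in Marouani's definition of subexponential growth, and check that the resulting ODE inequality really yields exponential growth. If Marouani's proof that ``$\tilde\omega^n$ is $\tilde d$(bounded) implies $n$-cyclic hyperbolicity'' is available as stated, I would simply invoke it after Step 1. Otherwise I would redo it following the divisorial case in Marouani--Popovici, replacing $\omega^{n-1}$ by $\omega^n$ and $\C^{n-1}$ by $\C^n$.
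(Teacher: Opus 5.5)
\textbf{Verdict: genuine gap in Step~3.} Your architecture is the paper's. You take a bounded primitive of a top-degree form on $\widetilde X$, lift $f$ to $\widetilde f$, apply Stokes on $\B_r$, and then use Cauchy--Schwarz with the coarea formula to get a differential inequality that contradicts condition~(\textit{ii}). Your Step~1, which works directly with $\widetilde\omega^n$, is what the first sentence of Theorem~C needs. The paper's written proof starts from a CY-hyperbolic form $\nu_\Omega$ and uses $\omega_n\le C_1\nu_\Omega$, so it literally covers only the Calabi--Yau case. The general case follows once $\omega_n$ is known to be $\widetilde d$(bounded), which is exactly what Brooks plus Sullivan--Gromov give.

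The gap is Step~3, which is the actual content of the theorem and which you leave as ``should give''. The conclusion you draw from your target inequality is false in that generality. An inequality $V(r)\le C\,V'(r)\,\rho(r)$ with $\rho$ merely subexponential does not force exponential growth. For $\rho(r)=r$ it only gives $V(r)\gtrsim r^{1/C}$, which is compatible with~(\textit{ii}). The missing idea is that condition~(\textit{i}) bounds exactly the complementary Cauchy--Schwarz factor, with exactly the power of $r$ produced by the coarea formula. That is why the resulting inequality has a \emph{constant} coefficient.

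Here is how to close it.
\begin{enumerate}
\item Measure the Stokes boundary term as the $f^*\omega$-area $A_{\omega,f}(\Sp_r)=\int_{\Sp_r}d\sigma_{\omega,f,r}$. This is the Jacobian of $f|_{\Sp_r}$, not the full norm $|\Lambda^{2n-1}df|$; with the full norm the weights will not match~(\textit{i}). With $V(r)=\operatorname{Vol}_{\omega,f}(\B_r)$ and $\widetilde\omega_n=d\eta$, Stokes gives $V(r)\le\lVert\eta\rVert_\infty A_{\omega,f}(\Sp_r)$.
\item Apply Cauchy--Schwarz on $\Sp_r$ with weights $|d\beta|^{\pm1/2}_{f^*\omega}$. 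Combine it with the coarea identity $V'(r)=2r\int_{\Sp_r}|d\beta|^{-1}_{f^*\omega}\,d\sigma_{\omega,f,r}$ and with condition~(\textit{i}). For $r>r_0$ this gives
\[
A_{\omega,f}(\Sp_r)^2\le\frac{V'(r)}{2r}\int_{\Sp_r}|d\beta|_{f^*\omega}\,d\sigma_{\omega,f,r}\le\frac{C_0}{2}\,V(r)\,V'(r).
\]
Hence $V(r)\le\tfrac{C_0}{2}\lVert\eta\rVert_\infty^2\,V'(r)=:K^{-1}V'(r)$.
\item Choose $r_1>r_0$, so that $V(r_1)>0$ by non-degeneracy. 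Integrating from $r_0$ itself gives nothing, since $\int_{r_0}^{r_0}V=0$. You then get $V(r)\ge V(r_1)e^{K(r-r_1)}$, so $\log\int_0^tV(r)\,dr\ge Kt-c$.
\item Therefore $t/\lambda-\log\int_0^tV$ stays bounded above for $\lambda=1/K$, contradicting~(\textit{ii}).
\end{enumerate}
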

\hspace{2pt} Beyond the mentioned examples, we discuss methods for constructing new Calabi--Yau hyperbolic manifolds from known ones. These constructions are gathered in the following theorem.
\begin{thmD}[\textbf{Construction of Calabi--Yau hyperbolic manifolds}] \leavevmode
\begin{enumerate}
    \item If $X$ is a Calabi--Yau hyperbolic manifold and $Y$ is a compact Calabi--Yau manifold, then $X\times Y$ is Calabi--Yau hyperbolic.
    \item If $X$ and $Y$ are two compact Calabi--Yau manifolds, and $f:X\longrightarrow Y$ is a finite étale cover, then $X$ is Calabi--Yau hyperbolic if and only if $Y$ is Calabi--Yau hyperbolic.
    \item Let $X$ be a Calabi--Yau hyperbolic manifold, and let $\mu:X\longrightarrow Y$ be a crepant modification. Then $Y$ is Calabi--Yau hyperbolic.
    \item The following families of deformations consist of Calabi--Yau hyperbolic manifolds:
    \begin{enumerate}
        \item[(a)] Let $X$ be a generalized Calabi--Gray manifold associated with a hyperkähler manifold $M$ and a pair $(Y,\varphi)$. If $\{M_t\}_{t\in\mathbb{B}_\varepsilon}$ is a holomorphic family of small deformations of $M$, then there exists a holomorphic family $\{X_t\}_{t\in\mathbb{B}_\varepsilon}$ of generalized Calabi--Gray manifolds associated with $\{M_t\}_{t\in\mathbb{B}_\varepsilon}$ and the fixed pair $(Y,\varphi)$, with $X_0=X$. In particular, each $X_t$ is Calabi--Yau hyperbolic.
        \item[(b)] Let $X=\mathrm{SL}_2(\mathbb{C})/\Gamma$, where $\Gamma\subset\mathrm{SL}_2(\mathbb{C})$ is a torsion-free cocompact lattice with $b_1(\Gamma)>0$. The family of deformations $\{X_\rho\}_{\rho\sim\rho_0}$ arising from Ghys's construction defines a non-trivial family of Calabi--Yau hyperbolic manifolds.
    \end{enumerate}
    \item Let $Y$ be a compact CY-hyperbolic manifold, and let $p:X\longrightarrow Y$ be a holomorphic principal elliptic bundle. Then $X$ is CY-hyperbolic.
\end{enumerate}
\end{thmD}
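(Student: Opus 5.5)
The plan is to reduce every item of Theorem D to Theorem A. Each manifold is checked to be compact with trivial canonical bundle, so that Calabi--Yau hyperbolicity becomes equivalent to non-amenability of $\pi_1$. The proof then turns into group theory about how $\pi_1$ behaves under each construction. We will use three standard facts about amenability. It passes to subgroups and to quotients. It is stable under extensions. A group is amenable if and only if a finite-index subgroup is amenable.

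For (1), $K_{X\times Y}=p_1^*K_X\otimes p_2^*K_Y$ is trivial and $\pi_1(X\times Y)=\pi_1(X)\times\pi_1(Y)$. This product surjects onto $\pi_1(X)$, which is non-amenable, so the product is non-amenable. For (2), an étale map satisfies $f^*K_Y=K_X$, and $\pi_1(X)$ is a finite-index subgroup of $\pi_1(Y)$, so the two groups are amenable or not together. For (3), a crepant modification $\mu$ satisfies $K_X=\mu^*K_Y$, which gives triviality of $K_Y$ (or we take $Y$ Calabi--Yau by hypothesis). We also need $\pi_1(X)\to\pi_1(Y)$ to be an isomorphism, or at least surjective with kernel we can control. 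For a proper modification of normal spaces the induced map on $\pi_1$ is surjective, since the fibres are connected. For smooth $Y$ it is an isomorphism, because $\pi_1$ is a bimeromorphic invariant of compact complex manifolds. We then need the converse direction: $\pi_1(Y)$ is non-amenable because it is isomorphic to $\pi_1(X)$. This step needs care if $Y$ is allowed to be singular. I expect to use smoothness of $Y$, which is implicit in Theorem A applying to $Y$.

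For (4a), the content is first to construct the family. Fei's construction is $X=M\times Y$ twisted by the twistor data of $M$ pulled back along $\varphi:Y\to\mathbb{P}^1$. For small deformations $M_t$ of a hyperkähler $M$, each $M_t$ remains hyperkähler by stability of Kähler structures and Yau's theorem. The twistor family then varies holomorphically, so one pulls back the twistor spaces $Z(M_t)$ along the fixed $\varphi$. This gives a holomorphic family $X_t$ with trivial canonical bundle, where the triviality comes from Fei's criterion on $\varphi$, which does not involve $t$. Each $X_t$ is diffeomorphic to $X$ by Ehresmann, so $\pi_1(X_t)\cong\pi_1(X)$. Non-amenability is therefore preserved, and item (b) of the examples applies, since $\pi_1(X)\cong\pi_1(Y)$ when $M$ is simply connected. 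Alternatively we can use the stated criterion directly. For (4b), Ghys's deformations $X_\rho=\Gamma\backslash \mathrm{SL}_2(\mathbb{C})$ twisted by a homomorphism $\rho$ near the trivial one are again compact, with a holomorphic volume form induced by the bi-invariant form. They are diffeomorphic to $X$, hence have $\pi_1\cong\Gamma$, a cocompact lattice in a non-amenable group. That lattice is non-amenable because it contains a free group by Tits. Non-triviality of the family is quoted from Ghys; it needs $b_1(\Gamma)>0$ to produce nontrivial $\rho$.

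For (5), a principal elliptic bundle $p:X\to Y$ has $K_X\cong p^*K_Y$, since the vertical tangent bundle is trivialised by the fundamental vector field of the fibre action. Hence $K_X$ is trivial. The homotopy long exact sequence gives $\pi_1(E)\to\pi_1(X)\to\pi_1(Y)\to\pi_0(E)=1$, so $\pi_1(X)$ surjects onto the non-amenable group $\pi_1(Y)$ and is non-amenable. The main obstacle in the whole proof is (4a), namely making rigorous that Fei's construction varies holomorphically in $t$ and preserves trivial canonical bundle. Item (3) comes second, if singular targets are allowed.
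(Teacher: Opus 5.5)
Your proposal is correct and matches the paper's proof item by item. In each case you verify that the canonical bundle is trivial and then reduce, via Theorem~A, to how non-amenability of $\pi_1$ behaves under products, finite-index subgroups, modifications (which induce isomorphisms on $\pi_1$), the Calabi--Gray and Ghys deformations (which leave $\pi_1$ unchanged), and the surjection $\pi_1(X)\to\pi_1(Y)$ for principal elliptic bundles.

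The one step you leave implicit is the triviality of $K_Y$ in (3) for $\mu:X\to Y$ as stated; it follows from $\mu_*\mathcal{O}_X=\mathcal{O}_Y$ and the projection formula. The holomorphic dependence on $t$ in (4a), and the tacit assumption there that $X$ itself is CY-hyperbolic, are treated at the same level of detail in the paper.
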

\hspace{2pt} We conclude by discussing obstructions to Kobayashi hyperbolicity for Calabi--Yau manifolds in arbitrary dimension, as well as for compact complex surfaces. The following theorem summarizes these obstructions and their implications for the examples and constructions considered in this paper.
\begin{thmE}[\textbf{Obstructions to Kobayashi hyperbolicity}] \leavevmode
\begin{enumerate}
    \item Assuming the validity of the claimed proof of the Global Spherical Shell (GSS) conjecture~\cite{OAI-GSS-2026}, every compact Kobayashi hyperbolic complex surface is Kähler.
    \item The examples of Calabi--Yau hyperbolic manifolds constructed from Yachou manifolds and generalized Calabi--Gray manifolds, as well as the further manifolds obtained from them by the constructions considered in Theorem~C, are not Kobayashi hyperbolic.
\end{enumerate}
\end{thmE}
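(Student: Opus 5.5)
Theorem~E has two parts, and I will treat them separately.

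\textbf{Part (1).} I will go through the Kodaira classification of compact complex surfaces, with Kobayashi hyperbolicity encoded, via Brody's theorem, as the absence of nonconstant entire curves. Suppose $S$ is a compact non-Kähler surface, so $b_1(S)$ is odd, and assume it is Kobayashi hyperbolic.

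\emph{Step 1: reduce to a minimal model.} Every rational curve $\CP^1\to S$ is an entire curve, so $S$ contains no $(-1)$-curves and is minimal.

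\emph{Step 2: Kodaira dimension $\ge 0$.} The non-Kähler minimal surfaces in this range are primary and secondary Kodaira surfaces and properly elliptic surfaces with odd $b_1$. Each of these carries an elliptic fibration, or is a quotient of an elliptic fibration, so it contains elliptic curves. An elliptic curve is the image of a nonconstant map $\C\to T\to S$, so none of these is hyperbolic.

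\emph{Step 3: class VII, $\kappa=-\infty$, $b_1=1$.}
\begin{enumerate}
\item[(i)] If $b_2=0$, Bogomolov's theorem (with the proofs of Li--Yau--Zheng and Teleman) says $S$ is a Hopf surface or an Inoue--Bombieri surface. Hopf surfaces are covered by $\C^2\setminus\{0\}$, so they are not hyperbolic. For Inoue surfaces I use their explicit description as quotients of $\Hyp\times\C$ by affine groups: the complex line $\{w\}\times\C$ projects to a nonconstant entire curve.
\item[(ii)] If $b_2>0$, I invoke the assumed GSS result~\cite{OAI-GSS-2026}: $S$ contains a global spherical shell. By Dloussky and Kato, such an $S$ is a Kato surface, which contains exactly $b_2$ rational curves. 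Each of these gives a nonconstant map from $\CP^1$ or its normalization, so $S$ is not hyperbolic.
\end{enumerate}
Hence every hyperbolic compact surface is Kähler.

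\textbf{Part (2).} For Yachou manifolds $G/\Gamma$, I take a one-parameter subgroup $t\mapsto\exp(tA)$ for any nonzero $A\in\mathfrak g$. This gives a nonconstant holomorphic map $\C\to G\to G/\Gamma$; it is nonconstant because $\Gamma$ is discrete.

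For generalized Calabi--Gray manifolds, the fibres of $X\to Y$ are hyperkähler manifolds $M$. These are compact Kähler Calabi--Yau, so by the Kobayashi conjecture (or directly, since $M$ contains an entire curve, e.g.\ twistor-line arguments or Verbitsky's result that hyperkähler manifolds are non-hyperbolic) there is a nonconstant entire curve in a fibre, and hence in $X$.

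For the constructions, non-hyperbolicity is inherited in each case:
\begin{enumerate}
\item[(i)] Products contain $X\times\{pt\}$.
\item[(ii)] Étale covers and quotients lift or push entire curves, since $\C$ is simply connected.
\item[(iii)] For a crepant modification $\mu:X\to Y$, the image $\mu\circ f$ of an entire curve $f$ is nonconstant unless $f$ lies in the exceptional locus. In that case I instead pick an entire curve in the original family avoiding it; for $G/\Gamma$ the translates are dense, so a generic translate works.
\item[(iv)] Deformations use the same argument: Ghys deformations are still quotients of $G$ with holomorphic one-parameter orbits, and Calabi--Gray families have hyperkähler fibres.
\item[(v)] Principal elliptic bundles contain elliptic fibres.
\end{enumerate}

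\textbf{Main obstacle.} I expect the main obstacle to be Step 3(ii), namely verifying the GSS-to-rational-curves implication cleanly; the other steps are routine.
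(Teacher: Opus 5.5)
Your Part (1) follows the paper's route: the Enriques--Kodaira classification, elliptic curves for $\kappa\ge 0$, Bogomolov's theorem (Hopf or Inoue) for $b_2=0$, and the assumed GSS result (Kato surfaces carry $b_2$ rational curves) for $b_2>0$. Most of Part (2) also matches: the curves $z\mapsto\pi(\exp(z\xi)g_0)$ on Yachou manifolds and on Ghys's deformations, and inheritance through products, finite étale covers, deformations and elliptic fibres. For generalized Calabi--Gray manifolds the paper uses Campana's theorem that \emph{some} fibre $\pi^{-1}(t_0)$ of $\mathcal{Z}_M\to\CP^1$ contains an entire curve. It then uses surjectivity of $\varphi$ (open mapping plus compactness) to find $y_0$ with $\varphi(y_0)=t_0$. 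If you use the twistor-line argument, you need that surjectivity step, which you omit. Your alternatives avoid it by putting an entire curve in every fibre, but they rest on much heavier input: Verbitsky's ergodicity theorem, or the claimed Kobayashi conjecture, which is conditional and appears in the paper only as a remark.

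The genuine gap is step (iii). Pushing an entire curve $f$ forward along $\mu:X\to Y$ gives a nonconstant curve only if $f(\C)\not\subset\operatorname{Exc}(\mu)$. You can arrange this only when entire curves pass through a dense set of points, as on $G/\Gamma$. For a generalized Calabi--Gray manifold you have produced entire curves only inside fibres of $p_Y$; on the paper's route they lie in a single fibre, which is a divisor when $\dim Y=1$. Nothing in your argument prevents all of them from lying in the exceptional locus, and the same problem propagates to manifolds built from Calabi--Gray examples.

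The paper never pushes curves forward. It shows that if the \emph{source} of a modification onto a smooth target is Kobayashi hyperbolic, the modification is a biholomorphism, since otherwise the exceptional locus contains a rational curve (Biswas--McKay). That covers the body's version of the crepant construction, where the new manifold is the source. In the direction you adopted (new manifold is the target, as in Theorem D), the cleanest fix is that crepant modifications between compact complex manifolds are trivial. The Jacobian determinant of $\mu$ is a section of $K_X\otimes\mu^*K_Y^{-1}\simeq\mathcal{O}_X$ that is not identically zero, so it is a nonzero constant. Hence $\mu$ is a proper local biholomorphism of degree one, i.e.\ a biholomorphism, and step (iii) produces nothing new.
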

\section*{Acknowledgments}
\hspace{2pt} I would like to thank Dan Popovici for taking the time to look at this paper, for drawing my attention to his recent joint works with H. Kasuya and L. Ugarte \cite{kasuya2025higher,kasuya2025properties}, and for suggesting that I investigate the relationship between Calabi--Yau hyperbolicity and its geometric counterpart, namely $n$-cyclic hyperbolicity, as addressed in \hyperref[C]{Theorem~C}.
\section{Classical hyperbolicity notions}
\hspace{2pt}Throughout this paper, $X$ will be a compact connected complex manifold of dimension $\dim_\C X=n$ unless otherwise stated, and $\pi:\widetilde{X}\longrightarrow X$ its universal cover. In particular, if $\omega$ is some Hermitian metric on $X$, then $\widetilde{\omega}=\pi^*\omega$ is a \textit{complete} Hermitian metric on $\widetilde{X}$. We begin by recalling the classical notions of hyperbolicity that motivate the definition studied in this paper, together with some of their important properties.
\begin{dfn}
Let $X$ be a (not necessarily compact) complex manifold. We say that $X$ is \textbf{Kobayashi hyperbolic} if the \textbf{Kobayashi pseudo-distance} $($defined in~\cite{kobayashi1967intrinsic}$)$ is a genuine distance on $X$.
\end{dfn}
R. Brody  gave a characterization of Kobayashi hyperbolicity in the compact setting~\cite{brody1978compact}. Namely:
\begin{thm} Let $X$ be a compact complex manifold. Then:
\begin{center}
    X is Kobayashi hyperbolic $\Longleftrightarrow$ $X$ contains no nonconstant entire curves
\end{center} \label{4}
\end{thm}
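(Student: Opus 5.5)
We have to prove Brody's theorem: for compact $X$, Kobayashi hyperbolicity is equivalent to the absence of nonconstant entire curves.

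\emph{Easy direction.} Suppose $f:\C\to X$ is a nonconstant holomorphic map. The Kobayashi pseudodistance $d_\C$ vanishes identically, since the disc of radius $R$ maps onto any segment of length $\ll R$ by a rescaling. By the distance-decreasing property of holomorphic maps, $d_X(f(z),f(w))\le d_\C(z,w)=0$. Choosing $z,w$ with $f(z)\neq f(w)$ shows that $d_X$ is not a distance. This needs no compactness.

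\emph{Hard direction (Brody reparametrization).} I will work with the infinitesimal Kobayashi--Royden pseudometric $F_X$ and use Royden's theorem that $d_X$ is its integrated form. Fix a Hermitian metric $\omega$ on $X$; its norm is $|\cdot|_\omega$. If $X$ is not hyperbolic, then there is no constant $c>0$ with $F_X\ge c|\cdot|_\omega$. Otherwise $d_X$ would dominate $c$ times the geodesic distance of $\omega$, hence be a distance. Unwinding the definition of $F_X$, this gives holomorphic maps $f_k:\mathbb{D}\to X$ with $|f_k'(0)|_\omega\ge k$.

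\emph{Brody's lemma.} Next I renormalize. Set $g_k(z)=f_k(r z)$ with $r<1$ close to $1$, so that $g_k$ is defined on a neighbourhood of $\overline{\mathbb{D}}$. Consider the function $(1-|z|^2)\,|g_k'(z)|_\omega$ on $\mathbb D$. It is continuous, vanishes on the boundary, and is at least $rk$ at $0$, so it attains its maximum $M_k\ge rk$ at some point $z_k$. Precompose $g_k$ with the disc automorphism sending $0$ to $z_k$; since $(1-|z|^2)|dz|$ is the invariant metric, the maximum of the new function is now attained at $0$. Then rescale, $h_k(z)=g_k(z/M_k)$, defined on the disc of radius $M_k\to\infty$. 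Invariance gives $|h_k'(0)|_\omega=1$ and
\[
|h_k'(z)|_\omega\le \frac{1}{1-|z|^2/M_k^2}\quad \text{on } |z|<M_k .
\]

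\emph{Limit.} On each compact subset of $\C$ the derivatives of the $h_k$ are therefore uniformly bounded for large $k$, so the family is equicontinuous. Because $X$ is compact, Arzelà--Ascoli and a diagonal argument yield a subsequence converging locally uniformly to a map $h:\C\to X$. Uniform limits of holomorphic maps are holomorphic, since holomorphy is preserved locally in charts via Cauchy's formula, and derivatives converge as well. Hence $|h'(0)|_\omega=1$ and $|h'|_\omega\le 1$. In particular $h$ is a nonconstant entire curve, indeed a Brody curve.

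The main obstacle I expect is the first step of the hard direction: showing that failure of hyperbolicity forces $F_X$ not to be bounded below by $c\,\omega$. This depends on compactness together with Royden's integration theorem and the upper semicontinuity of $F_X$. The cleaner route is the contrapositive. If $\sup\{|f'(0)|_\omega : f:\mathbb D\to X\}<\infty$, then $F_X\ge c|\cdot|_\omega$ directly, and integrating along chains of discs bounds $d_X$ below by $c$ times the $\omega$-distance. That gives hyperbolicity without needing Royden's theorem in full.
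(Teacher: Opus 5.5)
Your argument is correct and is Brody's reparametrization proof; the paper gives no proof of its own and simply cites Brody's Theorem~4.1, whose proof is this argument. Your closing chain-of-discs argument already settles the first step of the hard direction without Royden's theorem or compactness. One small slip: the automorphism-invariant metric on the disc is $|dz|/(1-|z|^2)$, not $(1-|z|^2)|dz|$. What is invariant under precomposition by disc automorphisms is the quantity $(1-|z|^2)|g'(z)|_\omega$, and that is exactly what your argument uses.
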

The latter property has come to be called \textbf{Brody hyperbolicity}. 

\hspace{2pt}A distinguished class of compact Kähler Kobayashi hyperbolic manifolds is given by the \textit{Kähler hyperbolic manifolds}, introduced by M. Gromov in one of his seminal papers~\cite{gromov1991kahler}:
\begin{dfn}
    A compact Kähler manifold $X$ is called \textbf{Kähler hyperbolic} if it admits an \textbf{$\widetilde{d}($bounded$)$} Kähler form $\omega$, i.e. $\widetilde{\omega}:=\pi^*\omega=d\zeta$, where $\zeta$ is a $L^\infty_{\widetilde{\omega}}$-bounded 1-form on $\widetilde{X}$.
\end{dfn}

\hspace{2pt}The bounded-primitive condition in Gromov's definition of Kähler hyperbolicity has motivated several extensions of hyperbolicity to compact complex manifolds endowed with special Hermitian structures, including \textbf{balanced hyperbolicity}~\cite{marouani2023balanced}, \textbf{SKT hyperbolicity} and \textbf{Gauduchon hyperbolicity}~\cite{marouani2023skt} (by replacing the $\widetilde d$-boundedness condition with a boundedness condition formulated in terms of Aeppli cohomology, which is called $(\widetilde{\partial+\bar\partial})$\textbf{-boundedness}, \textbf{$p$-Kähler hyperbolicity}~\cite{haggui2023compact}, \textbf{sG hyperbolicity} and \textbf{$p$-HS hyperbolicity}~\cite{ma2024strongly}, as well as \textbf{HS hyperbolicity} and \textbf{$p$-SKT hyperbolicity}~\cite{khelifati2026deformations}. Also recently, partial variants such as \textbf{partial $p$-Kähler hyperbolicity} have been introduced~\cite{kasuya2025partially}.

\hspace{2pt}The bounded-primitive condition in the definition of Kähler hyperbolicity is also closely related to the linear isoperimetric geometry of the universal cover. Let $(M,g)$ be a complete oriented Riemannian manifold. Its \textbf{\textit{modified} Cheeger constant} is given by:
\begin{equation*}
    h_g(M):=\inf_{D\Subset M}\dfrac{\operatorname{Area}_g(\partial D)}{\operatorname{Vol}_g(D)}
\end{equation*}
where the infimum is taken over all relatively compact smooth domains $D\Subset M$. This is the natural non-compact version of the isoperimetric constant introduced by J. Cheeger in~\cite[Definition 1]{cheeger1970lower} for compact Riemannian manifolds.
\begin{rmk}
The Cheeger constant $h(M,g)$ depends on the choice of the Riemannian metric $g$. For instance, if $g'=\lambda^2g$ for some $\lambda>0$, then:
\begin{equation*}
\operatorname{Area}_{g'}(\partial D)=\lambda^{m-1}\operatorname{Area}_{g}(\partial D),\qquad\operatorname{Vol}_{g'}(D)=\lambda^m\operatorname{Vol}_{g}(D),
\end{equation*}
where $m=\dim_{\R}M$. Consequently: $h_{g'}(M)=\lambda^{-1}h_g(M)$. However, when $M=\widetilde{X}$ is the universal cover of a compact Riemannian manifold $X$, the positivity of $h_{\widetilde{g}}(M)$ is independent of the choice of the Riemannian metric $g$ on $X$, since any two Riemannian metrics on $X$ are uniformly equivalent and so are their lifts to $M$. \label{2}
\end{rmk}

\hspace{2pt} When $M$ has \textbf{bounded geometry}, i.e. the sectional curvature is bounded and the injectivity radius is bounded away from zero, we have the following:
\begin{dfn}$($\cite[Definition II.5, Proposition II.6]{sullivan1976cycles},~\cite[\S4.1]{gromov1981hyperbolic},~\cite[Corollary 1.2]{sikorav2001growth}$)$
    Let $(M^m,g)$ be a complete oriented Riemannian manifold that has bounded geometry. Then $M$ is \textbf{open at infinity} if one of the following equivalent conditions holds:
    \begin{enumerate}
        \item The volume form $dV_g$ is $d($bounded$)$, i.e. $dV_g=d\tau$, where $\tau$ is an $L_g^\infty$-bounded $(m-1)$-form on $M$.
        \item $h_g(M)>0$, i.e. The following \textbf{linear isoperimetric inequality} holds for some constant $C>0:$
        \begin{equation*}
            \operatorname{Vol}_g(D)\leq C\operatorname{Area}_g(\partial D)
        \end{equation*}
        for any relatively compact smooth domain $D\Subset M$ $($One can take $C=h_g(M)^{-1})$.
    \end{enumerate}
\end{dfn}
It is easy to see that the universal cover $\widetilde{X}$ of a Kähler hyperbolic manifold ($X,\omega$) is open at infinity, as $\widetilde{\omega}^n=d(\zeta\wedge\widetilde{\omega}^{n-1})$ and:
\[
\lVert\zeta\wedge\widetilde{\omega}^{n-1}\rVert_{L^\infty_{\widetilde{\omega}}(\widetilde{X})}\leq\lVert\zeta\rVert_{L^\infty_{\widetilde{\omega}}(\widetilde{X})}\lVert\omega\rVert^{n-1}_{L^\infty_{\omega}(X)}<+\infty.
\]
\hspace{2pt}When $M$ is the universal cover of a closed Riemannian manifold $X$, the openness at infinity property of $M$ is also related to the fundamental group $\pi_1(X)$ by a theorem of R. Brooks (see also~\cite[\S4.2]{gromov1981hyperbolic}):
\begin{thm}\cite[Theorem 1]{brooks1981fundamental}
    Let $X$ be a closed Riemannian manifold and $M=\widetilde{X}$ its universal cover. Then $M$ is open at infinity if and only if $\pi_1(X)$ is non-amenable. \label{8}
\end{thm}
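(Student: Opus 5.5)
The statement is Brooks' theorem: the universal cover $M=\widetilde{X}$ of a closed Riemannian manifold $X$ satisfies $h_{\widetilde g}(M)>0$ if and only if $\Gamma=\pi_1(X)$ is non-amenable. The plan is to discretize. Fix a fundamental domain $F\subset M$ for the deck action, a finite symmetric generating set $S$ of $\Gamma$, and the Cayley graph $\mathrm{Cay}(\Gamma,S)$. By Følner's criterion, $\Gamma$ is amenable if and only if for every $\varepsilon>0$ there is a finite set $A\subset\Gamma$ with $|\partial_S A|\le\varepsilon|A|$, where $\partial_S A$ denotes the set of elements of $A$ having an $S$-neighbour outside $A$. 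By Remark~\ref{2}, the positivity of $h$ does not depend on the metric on $X$, and $M$ has bounded geometry since it covers a compact manifold. So the task is to compare the continuous Cheeger constant of $M$ with the combinatorial one of the Cayley graph, in both directions.

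\textbf{Amenable $\Rightarrow$ $h=0$.} Given a Følner set $A$, I would take $D_A=\bigcup_{\gamma\in A}\gamma\overline F$ and smooth it slightly, for instance by taking a sublevel set of a smoothed characteristic function. This gives $\operatorname{Vol}(D_A)\approx|A|\operatorname{Vol}(F)$. The boundary of $D_A$ is contained in the union of the translates $\gamma\partial F$ with $\gamma\in\partial_S A$, provided $S$ is chosen to contain all $\gamma$ with $\gamma\overline F\cap\overline F\neq\emptyset$. Hence $\operatorname{Area}(\partial D_A)\le C|\partial_S A|$, and the ratio $\operatorname{Area}(\partial D_A)/\operatorname{Vol}(D_A)$ tends to $0$ along a Følner sequence.

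\textbf{Non-amenable $\Rightarrow$ $h>0$.} This is the main obstacle, because an arbitrary domain $D$ need not be a union of tiles. The plan is as follows. Given $D\Subset M$, let $A=\{\gamma:\operatorname{Vol}(D\cap\gamma F)\ge\frac12\operatorname{Vol}(F)\}$. Split $\operatorname{Vol}(D)$ into the contribution of the tiles in $A$, which is at most $|A|\operatorname{Vol}(F)$, and that of the tiles where $D$ occupies less than half the volume. For the latter, I would apply a uniform relative isoperimetric inequality on the slightly enlarged tile $\gamma F'$, where $F'$ is a fixed compact neighbourhood of $\overline F$; all translates are isometric, so the constant is uniform. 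This gives $\operatorname{Vol}(D\cap\gamma F)\le C\operatorname{Area}(\partial D\cap\gamma F')$ whenever the fraction is at most one half; strictly, one applies the relative inequality to $\min(\operatorname{Vol}(D\cap\gamma F'),\operatorname{Vol}(\gamma F'\setminus D))$, and care is needed here. Likewise, for each $\gamma\in\partial_S A$ there is a neighbour $\gamma s\notin A$, so $D$ fills at least half of $\gamma F$ but less than half of $\gamma sF$. The relative isoperimetric inequality on the connected compact set $\gamma F''\supset\gamma F\cup\gamma sF$ then forces $\operatorname{Area}(\partial D\cap\gamma F'')\ge c>0$. Since the translates $\gamma F''$ have bounded overlap multiplicity, summing yields $|\partial_S A|\le C\operatorname{Area}(\partial D)$. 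Non-amenability gives $|A|\le C'|\partial_S A|$, and combining the two estimates gives $\operatorname{Vol}(D)\le C''\operatorname{Area}(\partial D)$.

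The delicate points are the uniform local (Neumann-type) isoperimetric inequality on the compact pieces $F',F''$, which are compact Lipschitz domains, so that inequality is standard, and the handling of tiles where $D$ fills more than half of $\gamma F'$ but is near the threshold. The thresholding trick above is designed to absorb exactly these. Alternatively, one could route the argument through $\lambda_0(M)>0$ via Cheeger–Buser and Kesten's theorem, which would be Brooks' original approach, but I would attempt the direct discretization first.
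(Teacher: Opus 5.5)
Your argument is correct. The paper gives no proof of this statement: it simply quotes Brooks, and its definition of ``open at infinity'' already identifies that property with $h_{\widetilde g}(\widetilde X)>0$, via Sullivan--Gromov--Sikorav. So proving the Cheeger-constant version directly, as you do, is exactly what is required.

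Your tiling/F{\o}lner comparison is essentially the standard discretization behind Brooks's theorem. It is therefore a self-contained substitute for the citation rather than a genuinely different route. Going through Kesten's theorem instead would not save anything, since linking the random walk on $\pi_1(X)$ to the geometry of $\widetilde X$ needs a discretization of the same kind.

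One simplification: the step you flag as delicate is automatic.
\begin{itemize}
\item If $\operatorname{Vol}(D\cap\gamma F)<\tfrac12\operatorname{Vol}(F)$, then $\operatorname{Vol}(\gamma F'\setminus D)\geq\operatorname{Vol}(\gamma F\setminus D)>\tfrac12\operatorname{Vol}(F)>\operatorname{Vol}(D\cap\gamma F)$. Also $\operatorname{Vol}(D\cap\gamma F')\geq\operatorname{Vol}(D\cap\gamma F)$. Hence the minimum in the relative isoperimetric inequality on $\gamma F'$ already dominates $\operatorname{Vol}(D\cap\gamma F)$, and no thresholding subtlety arises.
\item On $\gamma F''$, both terms of the minimum are at least $\tfrac12\operatorname{Vol}(F)$, simply because $\gamma F$ and $\gamma sF$ both lie in $\gamma F''$.
\end{itemize}

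The remaining choices are routine:
\begin{itemize}
\item take $F$ to be a union of lifted simplices of a smooth triangulation of $X$, so that $\partial F$ has finite area;
\item take $F'$ and $F''$ connected with smooth boundary, so that an $L^1$-Poincar\'e inequality, and hence the linear relative isoperimetric inequality, holds on them with a constant uniform over all translates;
\item recall that the left and right F{\o}lner conditions are interchanged by $A\mapsto A^{-1}$, so your right-multiplication Cayley-graph boundary matches the paper's left-translate formulation of amenability.
\end{itemize}
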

Recall that a finitely generated group $G$ is called \textbf{amenable} if it admits an invariant mean~\cite{vonneumann1929allgemeinen}. Equivalently, by Følner's criterion~\cite{folner1955groups}, there exists
a sequence $(F_k)_{k\geq1}$ of non-empty finite subsets of $G$ such that:
\[
\lim_{k\longrightarrow\infty}
\frac{|gF_k\triangle F_k|}{|F_k|}
=0
\qquad\text{for every }g\in G.
\]
A group which is not amenable is called \textbf{non-amenable}. 
\begin{rmk}
A result of G.M. Adel'son-Vel'skii and Y.A. Shreider in~\cite{adelʹson1957banach} asserts that every finitely generated group of subexponential growth is amenable. Consequently, every non-amenable finitely generated group has exponential growth. \label{3}
\end{rmk}

\hspace{2pt} The various notions of hyperbolicity recalled above are closely related to the positivity properties of the canonical bundle. Recall that the \textbf{canonical bundle} of $X$ is the holomorphic line bundle of $(n,0)$-forms on $X$:
\[
K_X:=\Lambda^{n,0}T^*X=\operatorname{det}(\Lambda^{1,0}T^*X)=-\operatorname{det}(T^{1,0}X)
\]
\hspace{2pt} In the compact Kähler setting, the Kobayashi conjecture~\ref{1} predicts that Kobayashi hyperbolicity forces $K_X$ to be ample. This is known, in particular, for Kähler hyperbolic manifolds, for which the ampleness of the canonical bundle was established by B.-L. Chen and X. Yang~\cite[Theorem 2.11]{chen2018compact}. More recently, a proof of the Kobayashi conjecture has been claimed in~\cite{OAI-Kobayashi-2026}.

\hspace{2pt} At the opposite end, \textit{Fano manifolds} provide a basic class of examples which cannot be hyperbolic, either in the Kobayashi sense or in the large-scale geometric sense considered above. Recall that a compact complex manifold $X$ is called \textbf{Fano} if $-K_X$ is ample. Fano manifolds are rationally connected by the independent results of F. Campana~\cite{campana1992connexite} and Kollár--Miyaoka--Mori~\cite{kollar1992rationally}, and rationally connected manifolds are simply connected. In particular, $\pi_1(X)$ is amenable. Thus, Fano manifolds are also excluded from the large-scale hyperbolic picture discussed above. Moreover, Fano manifolds contain nonconstant rational curves and hence nonconstant entire curves, so Brody's theorem~\ref{4} implies that they are Kobayashi non-hyperbolic.

\hspace{2pt} More generally, if $X$ is a compact Kähler manifold such that $-K_X$ is nef, then a theorem of M. Păun shows that $\pi_1(X)$ has polynomial growth~\cite[Theorem 1]{paun1998varietes,paun1997groupe}. Thus, $\pi_1(X)$ is amenable by Remark~\ref{3}. The case $K_X\simeq\mathcal{O}_X$ is a particular instance of this situation. Related topological obstructions have recently been studied by P. Li in~\cite{li2026topological}. In particular, P. Li proves that a Calabi--Yau manifold of even complex dimension $n\geq 4$ is not homeomorphic to a weak Fano manifold, provided that $b_2=1$ when $n\geq 6$ \cite[Theorem 2.1]{li2026topological}. Moreover, he shows that the fundamental group of a Calabi--Yau manifold cannot be isomorphic to that of a Kähler hyperbolic manifold \cite[Theorem 2.8]{li2026topological}. These results further illustrate the distinction between Calabi--Yau geometry and geometric settings characterized by positivity or Kähler hyperbolicity.

\hspace{2pt} This naturally leads to the question of whether the situation can be different in the non-Kähler setting: can a compact complex manifold with trivial canonical bundle nevertheless have a universal cover exhibiting genuinely hyperbolic large-scale geometry?

\section{Calabi--Yau hyperbolicity}
\hspace{2pt}We begin by recalling some classical examples of compact complex manifolds with trivial canonical bundle, distinguishing between the Kähler and non-Kähler settings. These examples will provide the geometric context for our study of \textit{Calabi--Yau hyperbolicity}.
\subsection{Calabi--Yau manifolds}
\hspace{2pt}Throughout this paper, we use the terminology \textbf{Calabi--Yau manifold} in the broad sense, without imposing any Kähler, projectivity, or simply-connectedness assumption.
\begin{dfn} A compact complex manifold $X$ is said to be a \textbf{Calabi--Yau manifold} if one of the following equivalent conditions holds:
\begin{enumerate}[label=(\roman*)]
    \item The canonical bundle of $X$ is trivial, i.e. $K_X\simeq\mathcal{O}_X$.
    \item $\exists\Omega\in H^0(X,K_X)\simeq H^{n,0}(X,\C)$ such that $\Omega(x)\neq 0$, $\forall x\in X$.
    \item $\exists\Omega\in \mathscr{C}_{n,0}^\infty(X)\cap\operatorname{Ker}\bar\partial$ such that $\Omega(x)\neq 0$, $\forall x\in X$.
\end{enumerate} \label{7}
\end{dfn}
In particular, if $K_X\simeq\mathcal{O}_X$, then $h^{n,0}(X)=1$. Indeed, since $K_X$ is trivial, it admits a nowhere-vanishing holomorphic section $\Omega\in H^0(X,K_X)$ (called a \textbf{Calabi--Yau form}). Every other holomorphic section of $K_X$ can then be written as $f\Omega$, for some holomorphic function $f$. Since $X$ is compact and connected, $f$ must be constant on $X$. Thus $H^0(X,K_X)=\C[\Omega]$.
\begin{ex} There are several important classes of Calabi--Yau manifolds in this sense, arising from rather different geometric settings. We briefly recall some of them:
\begin{enumerate}
      \item \textbf{Projective Calabi--Yau manifolds.} Let $X\subset\CP^{n+1}$ be a smooth hypersurface of degree $d$. By the adjunction formula, its canonical bundle is given by
      \[
      K_X\simeq\mathcal{O}_X(d-n-2).
      \]
      In particular, if $d=n+2$, then $K_X\simeq\mathcal{O}_X$, and hence $X$ is a Calabi--Yau manifold. 
      \item \textbf{Kähler Calabi--Yau manifolds.} Several important classes arise in the Kähler setting:      \begin{enumerate}[label=(\textit{\alph*})] 
            \item \textbf{Complex tori.} A complex torus $\T^n=\C^n/\Lambda$ (where $\Lambda\subset\C^n$ is a lattice) is Kähler and has trivial canonical bundle.
            \item \textbf{Hyperkähler manifolds.} A compact \textit{hyperkähler} manifold $X$ is a Kähler manifold admitting a \textit{holomorphic symplectic form}, i.e. an everywhere non-degenerate $2$-form $\sigma\in H^0(X,\Omega_X^2)$ such that $\bar\partial\sigma=0$. In particular, $\dim_{\C}X=2m$ for some $m\geq 1$, and $\sigma^m$ is a nowhere-vanishing holomorphic volume form, thus $K_X\simeq\mathcal{O}_X$. By Yau's solution of the Calabi conjecture (Theorem~\ref{0}) and the Bochner principle, $X$ is hyperkähler if and only if the underlying smooth manifold admits a Riemannian metric $g$ and three complex structures $I,J,K$ satisfying the quaternionic relations: $I^2=J^2=K^2=IJK=-\operatorname{Id}$, such that $g$ is Kähler with respect to each of them. The associated complex structures are parametrized by a sphere $\Sp^2\simeq\CP^1$ (see~\cite{beauville1983varietes}). The corresponding complex structure on $X\times\CP^1$ defines the \textit{twistor space} of $X$, denoted by $\mathcal{Z}_X$, which comes with a natural projection $\pi:\mathcal{Z}_X\longrightarrow\CP^1$.
            
            A compact hyperkähler manifold $X$ is \textit{irreducible}, or called \textit{irreducible holomorphic symplectic}, if it is simply connected and admits a holomorphic symplectic form $\sigma$ satisfying $H^0(X,\Omega_X^2)=\C\sigma$. In terms of Riemannian holonomy, this corresponds to a hyperkähler metric whose holonomy group is exactly $\operatorname{Sp}(m)$. \textit{K3 surfaces} are the basic examples in complex dimension two. In higher dimensions, examples include \textit{Hilbert schemes} of points on K3 surfaces and \textit{generalized Kummer varieties}.
            \item \textbf{Strict Calabi--Yau manifolds.} In the restrictive sense, a compact simply connected Kähler manifold $Y$ of dimension $n$ is called a \textit{strict Calabi--Yau manifold} if $K_Y\simeq\mathcal{O}_Y$ and $H^0(Y,\Omega_Y^p)=0$ for all $0<p<n$. In complex dimension two, these are just K3 surfaces.
       \end{enumerate}
       \hspace{2pt}We recall a fundamental consequence of the triviality of the canonical bundle in the Kähler setting, which follows from S.-T. Yau's solution of the Calabi conjecture~\cite{yau1978calabi}:
       \begin{thm} Let $X$ be a compact Kähler manifold with $c_1(X)=0$ in $H^2(X,\R)$. Then every Kähler class on $X$ contains a unique Ricci-flat Kähler metric. \label{0}
       \end{thm}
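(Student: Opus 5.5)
This is Yau's theorem (Theorem~\ref{0}), so I would not reprove it from scratch. The plan is to reduce the existence and uniqueness of a Ricci-flat metric in a given Kähler class to the complex Monge--Ampère equation, and then follow Yau's continuity method.

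\emph{Reduction.} Fix a Kähler form $\omega$ in the given class. Since $c_1(X)=0$ in $H^2(X,\R)$, the Ricci form $\operatorname{Ric}(\omega)$ is $d$-exact. The $\partial\bar\partial$-lemma on compact Kähler manifolds then gives a real function $F$ with $\operatorname{Ric}(\omega)=i\partial\bar\partial F$. We normalize $F$ so that
\[
\int_X e^F\omega^n=\int_X\omega^n .
\]
Every Kähler form in $[\omega]$ has the form $\omega_\varphi=\omega+i\partial\bar\partial\varphi$. We also have $\operatorname{Ric}(\omega_\varphi)=\operatorname{Ric}(\omega)-i\partial\bar\partial\log(\omega_\varphi^n/\omega^n)$. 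Hence $\omega_\varphi$ is Ricci-flat if and only if
\[
\omega_\varphi^n=e^{F+c}\omega^n
\]
for some constant $c$, and integrating both sides forces $c=0$. The problem therefore becomes solving this equation for $\varphi$ with $\omega_\varphi>0$, normalized by $\sup\varphi=0$ or $\int\varphi\,\omega^n=0$.

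\emph{Uniqueness.} This is Calabi's argument. Suppose $\omega_\varphi^n=\omega_\psi^n$. Then
\[
0=\int_X(\varphi-\psi)\bigl(\omega_\psi^n-\omega_\varphi^n\bigr).
\]
We expand the difference as $i\partial\bar\partial(\psi-\varphi)\wedge\sum_k\omega_\psi^k\wedge\omega_\varphi^{n-1-k}$ and integrate by parts. This yields a sum of nonnegative terms $\int i\partial(\varphi-\psi)\wedge\bar\partial(\varphi-\psi)\wedge\cdots$, so $\varphi-\psi$ is constant.

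\emph{Existence.} I would use the continuity method on the family
\[
\omega_{\varphi_t}^n=c_te^{tF}\omega^n,\qquad t\in[0,1],
\]
where $c_t$ is the normalizing constant. The set of $t$ for which a solution exists is nonempty, since $\varphi_0=0$ works. It is open by the implicit function theorem in Hölder spaces: the linearization is the Laplacian of $\omega_{\varphi_t}$, which is invertible on functions with mean zero. Closedness needs a priori estimates that are uniform in $t$, namely $C^0$, then $C^2$, then $C^{2,\alpha}$. After that, Schauder bootstrapping gives smoothness and Arzelà--Ascoli gives a convergent subsequence.

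\emph{Main obstacle.} The hard step is the $C^0$ estimate. I would follow Yau's Moser iteration. We test the equation against $|\varphi|^p\varphi$ and integrate by parts to get $L^p$ gradient bounds, then iterate using the Sobolev inequality of $\omega$, starting from an $L^1$ bound. The $L^1$ bound comes from Green's function estimates, because $\Delta_\omega\varphi>-n$. Alternatively one could cite Kołodziej's pluripotential $L^\infty$ estimate.

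The $C^2$ estimate is the Aubin--Yau computation. Applying $\Delta_{\omega_\varphi}$ to $\log\operatorname{tr}_\omega\omega_\varphi-A\varphi$, with $A$ larger than the bisectional curvature bound of $\omega$, and using the maximum principle gives $\operatorname{tr}_\omega\omega_\varphi\le Ce^{A(\varphi-\inf\varphi)}$. Together with the equation, this bounds $\omega_\varphi$ above and below. The $C^{2,\alpha}$ estimate then follows from Evans--Krylov, or from Calabi's third-order estimate, since the equation is concave and uniformly elliptic once $C^2$ is controlled. In the paper's context, it is cleanest simply to cite~\cite{yau1978calabi} for these estimates.
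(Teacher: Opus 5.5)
Your proposal is correct. It takes essentially the same approach as the paper, which does not reprove this statement and simply quotes it as Yau's solution of the Calabi conjecture~\cite{yau1978calabi}. Your outline faithfully summarizes the argument of that reference: reduction to $\omega_\varphi^n=e^F\omega^n$ via the $\partial\bar\partial$-lemma, Calabi's integration-by-parts uniqueness, and the continuity method with Moser-iteration $C^0$, Aubin--Yau $C^2$ and Evans--Krylov or Calabi higher-order estimates.
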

       \hspace{2pt}In particular, every compact Kähler Calabi--Yau manifold admits a Ricci-flat Kähler metric. The structure of such manifolds is further described by the \textbf{Beauville--Bogomolov decomposition} theorem:
       \begin{thm}\cite{bogomolov1974decomposition,bogomolov1974kahler,kobayashi1981recent,beauville1983varietes} Let $X$ be a compact Ricci-flat Kähler manifold. Then there exists a finite étale cover $\widehat{X}\longrightarrow X$ such that:
       \[
       \widehat{X}\simeq T^q\times\prod_{j=1}^r Y_j^{n_j}\times\prod_{k=1}^s Z_k^{2m_k},
       \]
       where $T^q$ is a complex torus of dimension $q$, each $Y_j^{n_j}$ is a strict Calabi--Yau manifold of complex dimension $n_j$, and each $Z_k^{2m_k}$ is an irreducible hyperkähler manifold of complex dimension $2m_k$. \label{5}
       \end{thm}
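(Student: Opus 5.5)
The plan is to follow the classical route of Bogomolov, Calabi and Beauville: pass to the universal cover with the Ricci-flat metric, split it isometrically, identify the factors by their holonomy, and then return to a finite cover of $X$. First, by Theorem~\ref{0} we may assume $X$ carries a Ricci-flat Kähler metric $g$. Lift $g$ to $\widetilde{X}$; the lift is complete and Ricci-flat. I would invoke the Cheeger--Gromoll splitting theorem, iterated, to obtain an isometric and holomorphic splitting
\[
\widetilde{X}\simeq\C^q\times M',
\]
where $M'$ contains no lines. The splitting is holomorphic because the line factors are spanned by parallel vector fields, and parallel fields are compatible with the parallel complex structure. Since $\pi_1(X)$ acts cocompactly on $\widetilde{X}$, the Cheeger--Gromoll argument shows that $M'$ is compact. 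Being a factor of a simply connected manifold, $M'$ is also simply connected.

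Next, I would apply the de Rham decomposition to $M'$, writing it as a product of irreducible simply connected compact Ricci-flat Kähler manifolds. None of these factors is locally symmetric, since a Ricci-flat symmetric space is flat, and flatness would contradict compactness together with simple connectivity. Berger's classification then forces each factor to have holonomy either $\mathrm{SU}(n_j)$ or $\mathrm{Sp}(m_k)$.

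To identify the factors, I would use the Bochner principle on a compact Ricci-flat Kähler manifold: holomorphic forms are parallel, so they correspond to holonomy-invariant forms. For holonomy $\mathrm{SU}(n_j)$, the only invariants in $\Lambda^{p,0}$ occur for $p=0$ and $p=n_j$, so $H^0(\Omega^p)=0$ for $0<p<n_j$ and the factor is a strict Calabi--Yau manifold $Y_j$. For holonomy $\mathrm{Sp}(m_k)$, the invariant $(2,0)$-forms form the line $\C\sigma$, with $\sigma$ nondegenerate, so the factor is an irreducible hyperkähler manifold $Z_k$.

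Finally, I would return to $X$. The group $\Gamma=\pi_1(X)$ acts by holomorphic isometries preserving the de Rham splitting. Hence $\Gamma\subset\mathrm{Isom}(\C^q)\times\mathrm{Isom}(M')$, at least after passing to a finite-index subgroup that does not permute isomorphic factors. Since $M'$ carries no nonzero parallel vector fields, Bochner shows it has no nonzero Killing fields, so $\mathrm{Isom}(M')$ is compact and discrete, hence finite. The kernel $\Gamma_1$ of the projection to $\mathrm{Isom}(M')$ is therefore of finite index. Moreover, $\Gamma_1$ acts freely, properly and cocompactly on $\C^q$ by isometries, i.e.\ as a crystallographic group.

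The main obstacle is this last step, and I would handle it with Bieberbach's theorem: a finite-index subgroup $\Gamma_2\subset\Gamma_1$ acts on $\C^q$ by translations forming a lattice $\Lambda$. These translations are complex affine because they preserve the flat Kähler structure. Then
\[
\widehat{X}=\widetilde{X}/\Gamma_2\simeq(\C^q/\Lambda)\times\prod_{j} Y_j\times\prod_k Z_k
\]
is a finite étale cover of $X$ of the required form. Along the way one must check that the action splits as a product, which holds because $\Gamma_2$ acts trivially on $M'$, and that the holonomy and isometry-group facts apply to each factor separately.
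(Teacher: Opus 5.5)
Your proposal is correct. It follows the classical Beauville argument: a Ricci-flat metric, the Cheeger--Gromoll splitting $\widetilde{X}\simeq\C^q\times M'$ with $M'$ compact, de Rham plus Berger plus Bochner to identify the factors of $M'$, then finiteness of $\mathrm{Isom}(M')$ and Bieberbach's theorem to descend to a finite \'etale cover. The paper gives no proof of its own and only quotes the theorem from Bogomolov, Kobayashi and Beauville, so your argument is essentially the one in the cited source.
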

       Using the exact sequence of fundamental groups associated with a fibration, we deduce from Theorem~\ref{5} that:
       \begin{cor}\cite[Corollary 4.2]{demailly2015structure} Let $X$ be a Kähler Calabi--Yau manifold. Then there is an exact sequence
       \[
       0\longrightarrow\Z^{2q}\longrightarrow\pi_1(X)\longrightarrow G\longrightarrow 0
       \]
       where $G$ is a ﬁnite group. Namely, $\pi_1(X)$ is almost abelian $($hence amenable$)$ and is an extension of a ﬁnite group $G$ by the normal subgroup $\pi_1(\widehat{X})\simeq\Z^{2q}$. \label{6}
       \end{cor}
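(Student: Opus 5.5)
The plan is to derive the exact sequence from the Beauville--Bogomolov decomposition (Theorem~\ref{5}) together with the standard behaviour of fundamental groups under finite covers.

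First, by Yau's theorem (Theorem~\ref{0}), $X$ carries a Ricci-flat Kähler metric, since $K_X\simeq\mathcal{O}_X$ gives $c_1(X)=0$. Theorem~\ref{5} then gives a finite étale cover $\widehat{X}\to X$ with
\[
\widehat{X}\simeq T^q\times\prod_{j=1}^r Y_j^{n_j}\times\prod_{k=1}^s Z_k^{2m_k}.
\]
Replacing $\widehat{X}\to X$ by its Galois closure keeps this product form. Indeed, the Galois closure is a finite étale cover of $\widehat{X}$, and a finite étale cover of such a product is again of the same form: the simply connected factors lift trivially, and a finite cover of a torus is a torus. So we may assume $\widehat{X}\to X$ is Galois with finite deck group $G$.

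Next we compute $\pi_1(\widehat{X})$. Strict Calabi--Yau manifolds and irreducible hyperkähler manifolds are simply connected by definition. The fundamental group of a product is the product of the fundamental groups. Hence
\[
\pi_1(\widehat{X})\simeq\pi_1(T^q)\simeq\Z^{2q}.
\]

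Finally, covering space theory for the Galois cover gives the exact sequence
\[
1\to\pi_1(\widehat{X})\to\pi_1(X)\to G\to 1,
\]
with $\pi_1(\widehat{X})$ normal of finite index. This is the claimed extension, and $\pi_1(X)$ is virtually abelian.

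It remains to check amenability. Abelian groups are amenable, and amenability passes to extensions: $\Z^{2q}$ is amenable, a finite group is amenable, and an extension of amenable groups is amenable. Alternatively, $\pi_1(X)$ has polynomial growth, so Remark~\ref{3} applies.

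The only delicate step is the Galois-closure reduction. Arguing that the closure still splits as a product takes some care, for instance by appealing to the de Rham decomposition of the Ricci-flat metric. One can sidestep this by instead taking $N$ to be the normal core of $\pi_1(\widehat{X})\simeq\Z^{2q}$ in $\pi_1(X)$. This $N$ is normal of finite index in $\pi_1(X)$, and as a finite-index subgroup of $\Z^{2q}$ it is again free abelian of rank $2q$. This yields the exact sequence directly.
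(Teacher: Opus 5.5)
Your proposal is correct and follows essentially the paper's route: Yau's theorem plus the Beauville--Bogomolov decomposition give a finite étale cover $\widehat{X}$ with $\pi_1(\widehat{X})\simeq\Z^{2q}$, since the non-torus factors are simply connected, and the covering-space exact sequence then gives the extension. Your normal-core (equivalently Galois-closure) step supplies the normality of the finite-index subgroup, which the paper's one-line appeal to the fibration exact sequence takes for granted; your own covering argument already shows the Galois closure keeps the product form, so the de Rham decomposition is not needed.
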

       \item \textbf{Non-Kähler Calabi--Yau manifolds.} The class of compact non-Kähler Calabi--Yau manifolds contains a wide variety of geometries and topologies.
       \begin{enumerate}[label=(\textit{\alph*})]
       \item \textbf{Primary Kodaira surfaces.} A \textit{primary Kodaira surface} is a minimal compact non-Kähler complex surface of Kodaira dimension $0$ with trivial canonical bundle and first Betti number equal to 3. Its universal cover is biholomorphic to $\C^2$ (see, for instance,~\cite{kodaira1964structure,kodaira1966structure,barth2004compact}).
       \item \textbf{Compact complex parallelizable manifolds.} A compact complex manifold $X$ is called \textit{complex parallelizable} if its holomorphic tangent bundle is holomorphically trivial, i.e. $T_X\simeq\mathcal{O}_X^{\oplus n}$~\cite{wang1954complex}. In particular, $K_X=\det(T_X^*)\simeq\mathcal{O}_X$. By Wang's theorem~\cite[Theorem 1]{wang1954complex}, every compact complex parallelizable manifold can be expressed as a quotient $G/\Gamma$, where $G$ is a connected complex Lie group and $\Gamma\subset G$ is a discrete cocompact subgroup. Moreover, a compact complex parallelizable manifold is Kähler if and only if it is a complex torus~\cite[Corollary 2]{wang1954complex}. Two classical three-dimensional examples are particularly relevant:
       \begin{enumerate}[label=\textit{(\roman*)}]
       \item The \textbf{Iwasawa manifold.} Let $H_{\C}$ be the complex Heisenberg group
       \[
       H_{\C}=\left\{
       \begin{pmatrix}
       1 & z_1 & z_3\\
       0 & 1 & z_2\\
       0 & 0 & 1
       \end{pmatrix}
       \;\middle|\;
       z_1,z_2,z_3\in\C
       \right\},
       \]
       and let $\Gamma$ be its subgroup consisting of matrices whose entries $z_1,z_2,z_3$ belong to the Gaussian integers $\Z[i]$. The quotient $I:=H_{\C}/\Gamma$ is called the \textit{Iwasawa manifold}~\cite{nakamura1975complex}. It is a non-Kähler compact complex-parallelizable threefold, so it has trivial canonical bundle.
       \item \textbf{Compact quotients of $\mathrm{SL}_2(\C)$.} Let $\Gamma\subset \mathrm{SL}_2(\C)$ be a torsion-free cocompact lattice. Then $X=\mathrm{SL}_2(\C)/\Gamma$ is a compact non-Kähler complex-parallelizable threefold, hence it has trivial canonical bundle. $\mathrm{SL}_2(\C)$ admits a nowhere-vanishing holomorphic volume form invariant under left translations, which descends to the quotient.
       \end{enumerate}
       \item \textbf{Non-Kähler Calabi--Yau threefolds with trivial fundamental group.} In complex dimension three, R. Friedman~\cite{friedman1986simultaneous} and Lu--Tian~\cite{lu1994complex} constructed examples of compact complex manifolds with trivial canonical bundle on the underlying smooth manifolds $X_k=\mathop{\#}_k(\Sp^3\times \Sp^3)$, for integers $k\geq 2$ via \textit{conifold transitions}. These manifolds are simply connected and cannot be Kähler.
       \item \textbf{Examples arising from hyperbolic geometry.} J. Fine and D. Panov~\cite{finepanov2010hyperbolic} constructed further compact non-Kähler Calabi--Yau manifolds by exploiting hyperbolic geometry. Among their results, they obtained infinitely many distinct complex structures with trivial canonical bundle on $2(\Sp^3\times \Sp^3)\mathbin{\#}(\Sp^2\times\Sp^4)$.
       \item \textbf{Generalized Calabi--Gray manifolds.} Let $M$ be a compact hyperkähler manifold with $\dim_\C M=2m$. Given a compact complex manifold $Y$ and a non-constant holomorphic map $\varphi: Y\longrightarrow\CP^1$ satisfying $K_Y\simeq\varphi^*\mathcal{O}_{\CP^1}(2m)$, the associated \textit{generalized Calabi--Gray manifold} is the fiber product
       \[
       X:=Y\times_{\CP^1}\mathcal{Z}_M=\{(y,z)\in Y\times\mathcal{Z}_M\mid \varphi(y)=\pi(z)\}.
       \]
       As smooth manifolds, $X\underset{\mathscr{C^\infty}}{\cong}M\times Y$. Moreover, $K_X\simeq\mathcal{O}_X$~\cite{fei2016construction,fei2018generalized,fei2024remarks}.
       \item \textbf{Holomorphic \textit{p}-contact manifolds.} Recently, H. Kasuya, D. Popovici and L. Ugarte introduced the notion of a \textit{holomorphic $p$-contact structure} on a compact complex manifold $X$ with $\dim_\C X=n=2p+1$ \cite{kasuya2025higher}. This is a holomorphic $(p,0)$-form $\Gamma$ such that $\Gamma\wedge\partial\Gamma$ is nowhere vanishing on $X$. In particular, every holomorphic $p$-contact manifold is Calabi--Yau, since $\Omega_{\Gamma}:=\Gamma\wedge\partial\Gamma$ is a nowhere-vanishing holomorphic section of the canonical bundle $K_X$, hence $K_X\simeq\mathcal{O}_X$. The authors construct two classes examples of such manifolds in arbitrary dimensions that arise as higher-dimensional generalizations of the classical three-dimensional Iwasawa manifold \cite[Proposition~3.2]{kasuya2025higher}.
       
       In the classical case $p=1$, five classes of compact complex threefolds carrying holomorphic contact structures are known: the Iwasawa manifold, the Nakamura manifolds, compact quotients of $\mathrm{SL}_2(\C)$, a non-complex-parallelisable nilmanifold, and a non-complex-parallelisable solvmanifold \cite[Section \S3]{kasuya2025properties}. Within the setting of compact quotients of six-dimensional Lie groups endowed with left-invariant complex structures, Propositions~3.2 and~3.4 of \cite{kasuya2025properties} establish the corresponding classification results for nilmanifolds and solvmanifolds, respectively. Relying on the classification of A. Otal and L. Ugarte, Observation~3.5 in \cite{kasuya2025properties} identifies the Lie algebra underlying $\mathrm{SL}_2(\C)$ as the unique unimodular non-solvable real Lie algebra of dimension six admitting a complex structure with a non-zero closed $(3,0)$-form \cite{otal2023six}.
       \end{enumerate}
       \hspace{2pt}The examples above show that compact Calabi--Yau manifolds can exhibit substantially different geometric and topological properties in the absence of the Kähler condition. 
\end{enumerate}
\end{ex}
\subsection{Calabi--Yau hyperbolicity: Definition and examples}\label{3.2}
\hspace{2pt} An interesting question concerning compact Calabi--Yau manifolds is whether they can exhibit any form of hyperbolicity. In the Kähler setting, S. Kobayashi conjectured that the Kobayashi pseudometric vanishes identically on every compact Calabi--Yau manifold~\cite[Problem~C.1]{kobayashi1976intrinsic}. This conjecture is known to hold for all K3 surfaces and for certain compact hyperkähler manifolds, as established by L. Kamenova, S. Lu, and M. Verbitsky~\cite{kamenova2014kobayashi}. A weaker form of the conjecture asserts that every compact Kähler Calabi--Yau manifold is Kobayashi non-hyperbolic. Using mirror symmetry, L. Kamenova and C. Vafa proved this weaker statement for Kähler Calabi--Yau manifolds whose mirror dual exists and is not Hodge degenerate~\cite{kamenova2020kobayashi}. 

\hspace{2pt} Motivated by this question, we introduce a different notion of hyperbolicity for compact Calabi--Yau manifolds. Unlike Kobayashi hyperbolicity, our definition involves a large-scale geometric condition on the universal cover of a compact Calabi--Yau manifold. This approach allows us to investigate geometric properties of compact Calabi--Yau manifolds beyond the Kähler setting.
\begin{dfn}
    Let $(X,\omega)$ be a compact Hermitian manifold with $K_X\simeq\mathcal{O}_X$, and equip its universal cover $\pi:\widetilde{X}\longrightarrow X$ with the lifted Hermitian metric $\widetilde{\omega}:=\pi^*\omega$. $X$ is \textbf{Calabi--Yau hyperbolic} $($or simply \textbf{CY-hyperbolic}$)$ if there exists a Calabi--Yau form $\Omega$ such that the associated volume form $\nu_\Omega:=c_ni^{n^2}\Omega\wedge\bar\Omega$ $($where $c_n>0)$ is $\widetilde{d}($bounded$)$, i.e. $\widetilde{\nu}_\Omega:=\pi^*\nu_\Omega=d\zeta$, where $\zeta$ is a smooth real $L^\infty_{\widetilde{\omega}}$-bounded $(2n-1)$-form on $\widetilde{X}$. Such a form $\Omega$ will be called a \textbf{Calabi--Yau hyperbolic form} $($or a \textbf{CY-hyperbolic form} for short$)$.
\end{dfn}
\begin{rmk}
    \begin{enumerate}
        \item The notion of CY-hyperbolicity does not depend on the choice of Hermitian metric $\omega$ on $X$, since $X$ is compact hence any two Hermitian metrics on $X$ are uniformly equivalent. 
        \item If one Calabi--Yau form $\Omega$ on $X$ is hyperbolic, then every Calabi--Yau form on $X$ is hyperbolic (see the discussion following Definition~\ref{7}).
        \item More generally, as we shall explain in the sequel, CY-hyperbolicity is a topological property of compact Calabi--Yau manifolds. 
    \end{enumerate}
\end{rmk}
\begin{lem}\cite[Lemma~2.4]{chen2018compact} Let $(M^m,g)$ be a compact Riemannian manifold, and let $\alpha$ be a closed $\widetilde{d}($bounded$)$ $k$-form on $M$ with $1\leq k\leq m$. Then any $\beta\in\{\alpha\}_{DR}\in H^k_{DR}(X,\R)$ is also $\widetilde{d}($bounded$)$.
\end{lem}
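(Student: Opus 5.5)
We prove the lemma directly from the definitions. The idea is that two representatives of the same de Rham class differ by an exact form on $M$ whose primitive lives on the compact manifold. Such a primitive is automatically bounded, and this boundedness survives pulling back to the universal cover.

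Let $\pi:\widetilde{M}\longrightarrow M$ be the universal cover, equipped with the lifted metric $\widetilde{g}=\pi^*g$. By hypothesis, $\pi^*\alpha=d\zeta$ for some smooth $(k-1)$-form $\zeta$ on $\widetilde{M}$ with $\lVert\zeta\rVert_{L^\infty_{\widetilde{g}}(\widetilde{M})}<+\infty$.

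Let $\beta$ be any representative of the class $\{\alpha\}_{DR}$. By definition of de Rham cohomology there is a smooth $(k-1)$-form $\eta$ on $M$ with $\beta=\alpha+d\eta$. Pulling back and using that $\pi^*$ commutes with $d$, we get
\[
\pi^*\beta=\pi^*\alpha+d(\pi^*\eta)=d\bigl(\zeta+\pi^*\eta\bigr).
\]
So it suffices to show that $\zeta+\pi^*\eta$ is $L^\infty_{\widetilde{g}}$-bounded.

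Since $\pi$ is a local isometry between $(\widetilde{M},\widetilde{g})$ and $(M,g)$, the pointwise norm satisfies $|\pi^*\eta|_{\widetilde{g}}(\widetilde{x})=|\eta|_g(\pi(\widetilde{x}))$ for every $\widetilde{x}\in\widetilde{M}$. Hence
\[
\lVert\pi^*\eta\rVert_{L^\infty_{\widetilde{g}}(\widetilde{M})}=\lVert\eta\rVert_{L^\infty_g(M)},
\]
which is finite because $\eta$ is smooth and $M$ is compact. By the triangle inequality,
\[
\lVert\zeta+\pi^*\eta\rVert_{L^\infty_{\widetilde{g}}(\widetilde{M})}\leq\lVert\zeta\rVert_{L^\infty_{\widetilde{g}}(\widetilde{M})}+\lVert\eta\rVert_{L^\infty_g(M)}<+\infty.
\]
Therefore $\beta$ is $\widetilde{d}$(bounded), with bounded primitive $\zeta+\pi^*\eta$.

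There is no real obstacle in this argument. The only point to check is the pointwise identity $|\pi^*\eta|_{\widetilde{g}}=|\eta|_g\circ\pi$, which holds because $\widetilde{g}$ is by construction the pullback metric. For the same reason, the notion is independent of the choice of $g$: any two metrics on the compact manifold $M$ are uniformly equivalent, and their lifts to $\widetilde{M}$ are uniformly equivalent with the same constants.
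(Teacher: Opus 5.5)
Your proof is correct and is the same argument as the paper's: write $\beta=\alpha+d\eta$, take $\zeta+\pi^*\eta$ as the primitive of $\pi^*\beta$, and bound it by the triangle inequality, using that $\pi$ is a local isometry and that $\eta$ is bounded on the compact manifold $M$. The paper only states the inequality $\lVert\pi^*\eta\rVert_{L^\infty_{\widetilde{g}}}\leq\lVert\eta\rVert_{L^\infty_g}$, whereas you note it is an equality; this changes nothing in the argument.
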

\begin{proof}
$\alpha$ being $\widetilde{d}($bounded$)$ means there exists a $L_{\widetilde{g}}^\infty$-bounded $(k-1)$-form $\zeta$ on $\widetilde{M}$ such that $\widetilde{\alpha}:=\pi^*\alpha=d\zeta$. Since $\beta\in\{\alpha\}_{DR}$, there exists a $(k-1)$-form $\gamma$ on $M$ such that $\beta=\alpha+d\gamma$. Then: $\widetilde{\beta}:=\pi^*\beta=\pi^*\alpha+\pi^*(d\gamma)=d(\zeta+\pi^*\gamma)$, with:
\begin{equation*}
\lVert\zeta+\pi^*\gamma\rVert_{L_{\widetilde{g}}^\infty(\widetilde{M})}\leq\lVert\zeta\rVert_{L_{\widetilde{g}}^\infty(\widetilde{M})}+\lVert\gamma\rVert_{L_g^\infty(M)}<+\infty.
\end{equation*}
\end{proof}
\begin{dfn}\cite{brunnbauer2024atoroidal} Let $(M^m,g)$ be a compact Riemannian manifold. A cohomology class $\{\alpha\}\in H^k_{DR}(M,\R)$ is \textbf{hyperbolic} if $\alpha$ is $\widetilde{d}($bounded$)$. We denote the space of \textbf{hyperbolic cohomology classes} of degree $k$ by $V^k_{\mathrm{hyp}}(M)\subset H^k_{DR}(M,\R)$.
\end{dfn}
It is straightforward to check that $V^k_{\mathrm{hyp}}(M)$ is a vector subspace of $H^k_{DR}(M,\R)$.

\hspace{2pt} Note that when $(M^m,g)$ is a closed connected oriented Riemannian manifold, then either $V^m_{\mathrm{hyp}}(M)=0$ or $V^m_{\mathrm{hyp}}(M)=H^m_{DR}(M,\R)$. The second case is precisely the case where $\widetilde{M}$ is open at infinity, which is equivalent to the positivity of the Cheeger constant $h_{\widetilde{g}}(\widetilde{M})$. By Remark~\ref{2}, the positivity of $h_{\widetilde{g}}(\widetilde{M})$ is independent of the choice of Riemannian metric $g$ on $M$. Thus, by Theorem~\ref{8}, this occurs if and only if $\pi_1(M)$ is non-amenable. We therefore obtain the following:
\begin{prop}
    Let $X$ be a compact Calabi--Yau manifold. Then $X$ is CY-hyperbolic if and only if $\pi_1(X)$ is non-amenable. \label{9}
\end{prop}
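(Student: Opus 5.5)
The plan is to reduce Proposition~\ref{9} to the openness at infinity of $\widetilde{X}$ and then apply Brooks' Theorem~\ref{8}. Let $X$ be compact, connected, of complex dimension $n$, with Calabi--Yau form $\Omega$, and fix a Hermitian metric $\omega$ on $X$ with associated Riemannian metric $g$. First I observe that $\nu_\Omega=c_n i^{n^2}\Omega\wedge\bar\Omega$ is a smooth, real, closed $2n$-form. It is closed for degree reasons. It is nowhere vanishing and positive with respect to the complex orientation, because $\Omega(x)\neq0$ for every $x$. Hence there is a smooth positive function $f$ on $X$ with $\nu_\Omega=f\,dV_g$. By compactness, $0<a\leq f\leq b<\infty$. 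In particular $\int_X\nu_\Omega>0$, so $\{\nu_\Omega\}_{DR}$ is a nonzero class in $H^{2n}_{DR}(X,\R)\simeq\R$.

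For the direction ``non-amenable $\Rightarrow$ CY-hyperbolic'', I would argue as follows. The lifted metric $\widetilde g$ on $\widetilde X$ has bounded geometry, since it is the pullback of a metric on a compact manifold: curvature bounds descend and the injectivity radius is bounded below by deck invariance. So Brooks' Theorem~\ref{8} together with the equivalent characterizations of openness at infinity gives $dV_{\widetilde g}=d\tau$ with $\tau$ an $L^\infty_{\widetilde g}$-bounded form. Thus $\{dV_g\}$ is a hyperbolic class. Since $V^{2n}_{\mathrm{hyp}}(X)$ is a vector subspace of the one-dimensional space $H^{2n}_{DR}(X,\R)$, it equals $H^{2n}_{DR}(X,\R)$. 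Then $\{\nu_\Omega\}$ is hyperbolic, and Lemma~[Chen--Yang 2.4] shows that $\nu_\Omega$ itself is $\widetilde d$(bounded). The $L^\infty_{\widetilde g}$- and $L^\infty_{\widetilde\omega}$-norms agree up to constants. The primitive can also be taken real: $\widetilde\nu_\Omega$ is real, so I would replace $\zeta$ by $\operatorname{Re}\zeta$, which does not increase the norm.

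For the converse, suppose $\widetilde\nu_\Omega=d\zeta$ with $\zeta$ bounded. Then $\{dV_g\}=\lambda\{\nu_\Omega\}$, where $\lambda=\operatorname{Vol}_g(X)/\int_X\nu_\Omega$. By the same lemma, $dV_g$ is $\widetilde d$(bounded), so $\widetilde X$ is open at infinity and $\pi_1(X)$ is non-amenable by Theorem~\ref{8}. As an alternative that avoids the cohomological step, one could prove the linear isoperimetric inequality directly. For $D\Subset\widetilde X$, Stokes gives
\[
a\operatorname{Vol}(D)\leq\int_D\widetilde\nu_\Omega=\int_{\partial D}\zeta\leq\lVert\zeta\rVert_{L^\infty}\operatorname{Area}(\partial D),
\]
and conversely a bounded primitive of $dV_{\widetilde g}$ could be transferred.

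I expect the main obstacle to be only bookkeeping. One point is making sure Brooks' theorem is applied in the form ``$h_{\widetilde g}>0$ iff non-amenable'', and that the equivalence with a $d$(bounded) volume form (Sullivan, Gromov, Sikorav) is used under the bounded-geometry hypothesis, which holds here automatically. The other point is the one-dimensionality of top cohomology, which uses connectedness and orientability of $X$; both are automatic for a connected complex manifold.
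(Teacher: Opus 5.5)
Your proposal is correct and follows essentially the paper's argument. Since $\nu_\Omega$ represents a nonzero class in $H^{2n}_{DR}(X,\R)\simeq\R$, it is $\widetilde d$(bounded) if and only if $V^{2n}_{\mathrm{hyp}}(X)=H^{2n}_{DR}(X,\R)$, that is, if and only if $\widetilde X$ is open at infinity, which by Brooks' theorem is equivalent to $\pi_1(X)$ being non-amenable; your extra bookkeeping (bounded geometry of $\widetilde g$, choosing a real primitive, comparability of the $L^\infty$ norms, and the direct Stokes estimate for one direction) only makes explicit what the paper leaves implicit.
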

\hspace{2pt} Another immediate consequence of the definition, is the vanishing of the \textbf{\textit{L}$^2$-Betti numbers} at degree $0$ and degree $2n$. The space of square-integrable differential $k$-forms on $\widetilde{X}$ is denoted by $L_k^2(\widetilde{X})$. The \textbf{reduced \textit{L}$^2$-cohomology} of $\widetilde{X}$ is defined by:
\[
\overline{H}_{(2)}^k(\widetilde{X}):=\operatorname{Ker}(d_k)\big/\overline{\operatorname{Im}(d_{k-1})}\hspace{2pt},\quad\text{ for }0\leq k\leq 2n,
\] 
where the differential $d_k:L^2_k(\widetilde{X})\longrightarrow L^2_{k+1}(\widetilde{X)}$ is understood in the $L^2$ sense and the closure is taken in the $L^2$-norm. The deck transformation group $\pi_1(X)$ acts on $\overline{H}_{(2)}^k(\widetilde X)$, and the $k$-th $L^2$-Betti number is defined by $b_k^{(2)}(\widetilde X):=\dim_{\pi_1(X)}\overline{H}_{(2)}^k(\widetilde X)$, where $\dim_{\pi_1(X)}$ denotes the \textbf{von Neumann dimension}. These invariants are independent of the choice of Riemannian metric on the compact quotient (see~\cite[Chapter 1]{luck2002l2} for a general reference). 

\hspace{2pt} By the $L^2$-Hodge-de Rham Theorem~\cite[Theorem 1.59]{luck2002l2}, $\overline{H}_{(2)}^k(\widetilde{X})$ is naturally isomorphic to the space of \textbf{\textit{L}$^2$-harmonic \textit{k}-forms} $\mathcal{H}_{(2)}^k(\widetilde{X}):=\operatorname{Ker}\widetilde{\Delta}_k\subset L^2_k(\widetilde{X})$, where:
\[
\widetilde{\Delta}_k=dd^*+d^*d
\] 
is the self-adjoint $L^2$-Hodge Laplacian acting on $L_k^2$-forms in $\widetilde{X}$. In particular:
\[
b_k^{(2)}(\widetilde{X})=0 \Longleftrightarrow\operatorname{Ker}\widetilde{\Delta}_k=\{0\}\hspace{2pt},\quad\text{ for any }0\leq k\leq 2n.
\]
Cheeger's inequality~\cite{cheeger1970lower} yields the following lower bound for the spectrum of $\widetilde{\Delta}_0$: 
\[
\lambda\geq \frac{1}{4}h(\widetilde{X})^2.
\]
Hence, the $L^2$-Betti numbers vanish in degrees $0$ and $2n$ (the latter follows from Poincaré duality for $L^2$-Betti numbers~\cite[Theorem~1.35]{luck2002l2}).
\begin{cor}
    If $X$ is a compact CY-hyperbolic manifold, then: $b_0^{(2)}=b_{2n}^{(2)}=0$.
\end{cor}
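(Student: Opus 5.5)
The plan is to deduce the vanishing of $b_0^{(2)}$ from a spectral gap for the function Laplacian on $\widetilde{X}$, and then to get $b_{2n}^{(2)}$ by duality.

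\textbf{Step 1 (positive Cheeger constant).} By Proposition~\ref{9}, CY-hyperbolicity of $X$ gives that $\pi_1(X)$ is non-amenable. Theorem~\ref{8} then says that $\widetilde{X}$ is open at infinity. With the lifted metric $\widetilde{\omega}$, which has bounded geometry since it is lifted from a compact manifold, this means $h(\widetilde{X})>0$. Alternatively, one can argue directly from the definition. Write $\widetilde{\nu}_\Omega=d\zeta$ with $\zeta$ bounded. The form $\nu_\Omega$ is a positive multiple of $dV_\omega$, with a ratio bounded above and below on the compact $X$. Stokes' theorem then gives
\[
\operatorname{Vol}(D)\leq C\int_D\widetilde{\nu}_\Omega=C\int_{\partial D}\zeta\leq C'\operatorname{Area}(\partial D).
\]

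\textbf{Step 2 (spectral gap in degree $0$).} Cheeger's inequality, extended to complete manifolds via exhaustion by relatively compact domains, gives
\[
\inf\operatorname{Spec}(\widetilde{\Delta}_0)\geq \tfrac14 h(\widetilde{X})^2>0.
\]
Hence $\operatorname{Ker}\widetilde{\Delta}_0=\{0\}$ in $L^2$. A more elementary route: an $L^2$-harmonic function on a complete manifold is constant, by Yau's $L^2$ Liouville argument, i.e. cut-off integration by parts shows $df=0$. A constant lies in $L^2$ only if it is zero, because $\operatorname{Vol}(\widetilde{X})=\infty$, which already follows from Step 1 or simply from $\pi_1(X)$ being infinite. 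Then the $L^2$-Hodge--de Rham isomorphism quoted above gives $b_0^{(2)}(\widetilde{X})=0$.

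\textbf{Step 3 (top degree).} The Hodge star $\widetilde{*}$ of the lifted metric is a $\pi_1(X)$-equivariant isometry $L^2_0\to L^2_{2n}$ that intertwines $\widetilde{\Delta}_0$ and $\widetilde{\Delta}_{2n}$, using that $\widetilde{X}$ is oriented because it is complex. So $\operatorname{Ker}\widetilde{\Delta}_{2n}=\widetilde{*}\operatorname{Ker}\widetilde{\Delta}_0=\{0\}$, and therefore $b_{2n}^{(2)}=0$. Equivalently, one can invoke $L^2$ Poincaré duality from \cite[Theorem~1.35]{luck2002l2}.

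The only delicate point is Step 2: making sure Cheeger's inequality, which is stated for compact manifolds, applies to the complete non-compact $\widetilde{X}$. I would either cite the standard non-compact version or sidestep it with the Liouville argument, which needs only completeness and infinite volume.
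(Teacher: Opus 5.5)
Your proposal is correct and follows the paper's argument. You show the Cheeger constant of $\widetilde{X}$ is positive, and you justify this more explicitly than the paper does, either through non-amenability and Brooks' theorem or directly by Stokes. You then use Cheeger's spectral lower bound for $\widetilde{\Delta}_0$ to exclude nonzero $L^2$-harmonic functions, and $L^2$ Poincaré duality (equivalently, the equivariant Hodge star) to handle degree $2n$. Your alternative Liouville argument is also valid, and it makes clear that the corollary actually only needs $\pi_1(X)$ to be infinite.
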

\hspace{2pt} In view of Proposition~\ref{9} and Corollary~\ref{6} of the Beauville--Bogomolov decomposition~\ref{5}, CY-hyperbolicity is a genuinely non-Kähler hyperbolicity notion. Meaning that:
\begin{cor}
No compact Kähler Calabi--Yau manifold is CY-hyperbolic. \label{16}
\end{cor}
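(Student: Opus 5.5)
The plan is to argue by contradiction and combine Proposition~\ref{9} with Corollary~\ref{6}. Let $X$ be a compact Kähler manifold with $K_X\simeq\mathcal{O}_X$. By Proposition~\ref{9}, it suffices to show that $\pi_1(X)$ is amenable.

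\textbf{Step 1: a Ricci-flat metric.} Since $K_X$ is trivial, $c_1(X)=0$ in $H^2(X,\R)$. Theorem~\ref{0} then gives a Ricci-flat Kähler metric in any Kähler class. So $X$ is a compact Ricci-flat Kähler manifold, and the Beauville--Bogomolov decomposition (Theorem~\ref{5}) applies.

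\textbf{Step 2: the structure of $\pi_1(X)$.} By Corollary~\ref{6}, $\pi_1(X)$ sits in an exact sequence
\[
0\longrightarrow\Z^{2q}\longrightarrow\pi_1(X)\longrightarrow G\longrightarrow 0,
\]
with $G$ finite. The idea is that the strict Calabi--Yau and irreducible hyperkähler factors are simply connected. Hence $\pi_1(\widehat X)\simeq\pi_1(T^q)\simeq\Z^{2q}$, which sits as a finite-index normal subgroup of $\pi_1(X)$.

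\textbf{Step 3: amenability.} I will invoke the standard closure properties of amenable groups. Abelian groups are amenable; for instance, $\Z^{2q}$ has polynomial growth, so Remark~\ref{3} applies. Finite groups are amenable, and extensions of amenable groups by amenable groups are amenable. Alternatively, $\pi_1(X)$ contains a finite-index subgroup of polynomial growth, so it has polynomial growth itself, and Remark~\ref{3} applies directly. One could also bypass the decomposition entirely by quoting Păun's theorem, since $-K_X$ is trivial and hence nef. Either way $\pi_1(X)$ is amenable, so by Proposition~\ref{9} $X$ is not CY-hyperbolic.

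\textbf{Main obstacle.} The only delicate point is Step 2: justifying that the finite étale cover induces a finite-index inclusion on $\pi_1$, and that the non-torus factors are simply connected. Both follow from covering space theory and the definitions of strict Calabi--Yau and irreducible hyperkähler manifolds. Since Corollary~\ref{6} is already stated, I expect to cite it directly.
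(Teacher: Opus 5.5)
Your proposal is correct and follows the paper's own argument: the paper obtains the corollary from Proposition~\ref{9} together with Corollary~\ref{6}, which uses the Beauville--Bogomolov decomposition to show that $\pi_1(X)$ is almost abelian and hence amenable. One small point: normality of $\pi_1(\widehat X)$ in $\pi_1(X)$ is not needed in Step~2, since a group containing a finite-index amenable subgroup is already amenable.
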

\hspace{2pt} We are led to the following natural question: can a compact CY-hyperbolic manifold be in \textit{Fujiki class $\mathcal{C}$}? Recall that a compact complex manifold $X$ belongs to \textbf{Fujiki class $\mathcal{C}$} if it is bimeromorphic to a compact Kähler manifold. Equivalently, there exists a \textbf{modification} of compact complex manifolds $\mu:Y\longrightarrow X$ where $Y$ is Kähler. i.e. $\mu$ is a proper holomorphic map restricting to a biholomorphism outside a proper analytic subset of $X$. In general, modifications do not preserve the triviality of the canonical bundle. For instance, if $\mu$ is the blow-up of a smooth submanifold of codimension $d\geq 2$, with exceptional divisor $E$, then: $K_Y\simeq \mu^*K_X\otimes\mathcal{O}_Y((d-1)E)$. Thus, even if $K_X$ is trivial, $K_Y$ need not be trivial, and vice versa. However, we have:
\begin{enumerate}
    \item[\textbf{(\textit{a})}] The \textit{Kodaira dimension} is a bimeromorphic invariant: Let $X$ be a compact complex manifold with $\dim_\C X=n$.  For each integer $m\geq 1$, the \textbf{\textit{m}-th plurigenus} of $X$ is defined by
    \[
    P_m(X):=h^0(X,K_X^{\otimes m})=\dim_{\C}H^0(X,K_X^{\otimes m}).
    \]
    The \textbf{Kodaira dimension} of $X$, denoted by $\kappa(X)$, is defined as follows. If $P_m(X)=0$ for every $m\geq 1$, then $\kappa(X)=-\infty$. Otherwise, for every $m\geq 1$ such that $P_m(X)>0$, consider the \textit{m-th pluricanonical map}
    \[
    \Phi_m:X\longdashrightarrow\CP\bigl(H^0(X,K_X^{\otimes m})^*\bigr).
    \]
    Then: $\kappa(X):=\underset{{\substack{m\geq 1\\ P_m(X)>0}}}\max\dim_\C \Phi_m(X)$. In particular, $\kappa(X)\in\{-\infty,0,1,\ldots,n\}$. 
    
    If $K_X\simeq\mathcal O_X$, then $K_X^{\otimes m}\simeq\mathcal O_X$ for every $m\geq 1$. Since $X$ is compact and connected, we have $h^0(X,\mathcal O_X)=1$, and hence $P_m(X)=1$ for every $m\geq 1$. Therefore, $\kappa(X)=0$.

    If $X$ and $Y$ are two bimeromorphic compact complex manifolds, then one has~\cite[Corollary 6.4]{ueno1975classification}:
    \[
    P_m(X)=P_m(Y)\hspace{2pt},\quad\text{ for every }m\geq 1\quad\text{ and }\quad\kappa(X)=\kappa(Y).
    \]
    \item[\textbf{(\textit{b})}] Fundamental groups are invariant under modifications: 
    \begin{lem}
    Let $X$ and $Y$ be compact complex manifolds, and let $\mu:Y \longrightarrow X$ be a modification. Then the induced homomorphism $\mu_*:\pi_1(Y) \longrightarrow \pi_1(X)$ between the fundamental groups is an isomorphism. \label{10}
    \end{lem}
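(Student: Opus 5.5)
The plan is to show that $\mu_*$ is surjective and injective separately. Both directions rest on one geometric fact: the exceptional locus has real codimension at least $2$, and the fibres of $\mu$ are connected.

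Let $S\subset X$ be the proper analytic subset outside which $\mu$ is a biholomorphism, and set $E:=\mu^{-1}(S)$. Since $\mu$ is a proper bimeromorphic holomorphic map between compact manifolds, $E$ is a proper analytic subset of $Y$. Both $S$ and $E$ have complex codimension $\geq 1$, hence real codimension $\geq 2$. A standard transversality argument then gives that $\pi_1(X\setminus S)\to\pi_1(X)$ and $\pi_1(Y\setminus E)\to\pi_1(Y)$ are surjective: any loop, and in fact any path with endpoints off the subset, can be perturbed to avoid a real-codimension-$2$ subcomplex.

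\emph{Surjectivity.} Consider the commutative square formed by the biholomorphism $\mu|:Y\setminus E\to X\setminus S$ and the two inclusions. Because $\pi_1(X\setminus S)\to\pi_1(X)$ is onto, every class in $\pi_1(X)$ is represented by a loop in $X\setminus S$. That loop lifts through $\mu|$ to a loop in $Y\setminus E$, so the class lies in the image of $\mu_*$.

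\emph{Injectivity.} This is the main obstacle. Inclusion of complements of codimension-$1$ sets is not injective on $\pi_1$ (divisor complements have meridians), so the square alone does not suffice. I would use that the fibres of $\mu$ are connected, by Zariski's main theorem, since $X$ is normal and $\mu$ is bimeromorphic. The key step is then to show that $\mu_*$ kills nothing beyond the meridians.

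The kernel of $\pi_1(X\setminus S)\to\pi_1(X)$ is normally generated by meridians around the codimension-$1$ components of $S$ (for smooth $X$, removing codimension $\geq 2$ changes nothing). Since $X$ is smooth, $S$ can in fact be taken of codimension $\geq 2$: the indeterminacy locus of $\mu^{-1}$ has codimension $\geq 2$ in a normal variety. With that choice $\pi_1(X\setminus S)\cong\pi_1(X)$. In $Y$, the map $\pi_1(Y\setminus E)\to\pi_1(Y)$ is surjective with kernel normally generated by meridians of the divisorial components of $E$. Therefore $\pi_1(Y)$ is a quotient of $\pi_1(Y\setminus E)\cong\pi_1(X\setminus S)\cong\pi_1(X)$, and the composite $\pi_1(X)\to\pi_1(Y)\xrightarrow{\mu_*}\pi_1(X)$ is the identity. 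This forces the quotient map to be injective, so $\mu_*$ is an isomorphism.

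The delicate points are the codimension-$2$ choice of $S$ (normality of $X$ plus Zariski) and the compatibility of the isomorphisms in the square. A fallback route uses a cell-by-cell homotopy argument: a null-homotopy in $X$ of $\mu\circ\gamma$ is made transverse to $S$, hence misses it, and is then lifted to $Y$.
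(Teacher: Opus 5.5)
Your argument is correct and, once you choose $S$ of codimension $\geq 2$, it is exactly the paper's proof: $\pi_1(X\setminus S)\to\pi_1(X)$ is an isomorphism, $\pi_1(Y\setminus E)\to\pi_1(Y)$ is surjective, and the commutative square through the biholomorphism $Y\setminus E\cong X\setminus S$ forces $\mu_*$ to be bijective. You additionally justify the codimension-$2$ choice via the indeterminacy locus of $\mu^{-1}$, which the paper simply asserts; the detour through meridians and connected fibres is not needed.
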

    We include a proof of this standard fact for the sake of completeness.
    \begin{proof}
    Let $E\subset X$ be an analytic subset with $\operatorname{codim}_\C E\geq 2$ such that $\mu$ restricts to a biholomorphism $\mu|_{Y\setminus E'}:Y\setminus E'\longrightarrow X\setminus E$, where $E':=\mu^{-1}(E)$ with $\operatorname{codim}_\C E'\geq 1$. Since $E$ and $E'$ are analytic subsets, they admit stratifications by smooth locally closed submanifolds. Every stratum $S_\alpha$ of $E$ has $\operatorname{codim_\R S_\alpha\geq 4}$ in $X$, while every stratum $S'_\beta$ of $E'$ has $\operatorname{codim}_\R S'_\beta\geq 2$ in $Y$. By applying~\cite[Théorème~2.3, Chapitre IX]{Godbillon1971elements} to these strata, the inclusion $j_X:X\setminus E\xhookrightarrow{\quad} X$ induces an isomorphism on fundamental groups, whereas the inclusion $j_Y:Y\setminus E'\xhookrightarrow{\quad} Y$ induces a surjection on fundamental groups. These maps fit into the commutative diagram:
    \[
    \begin{tikzcd}[column sep=large, row sep=large]
    \pi_1(Y\setminus E')
    \arrow[r, "{(\mu|_{Y\setminus E'})_*}", "\cong"']
    \arrow[d, "{(j_Y)_*}"']
    &
    \pi_1(X\setminus E)
    \arrow[d, "{(j_X)_*}", "\cong"'] \\
    \pi_1(Y)
    \arrow[r, "\mu_*"']
    &
    \pi_1(X)
    \end{tikzcd}
    \]
    The composition $(j_X)_*\circ(\mu|_{Y\setminus E'})_*$ is an isomorphism. Since $\mu_*\circ(j_Y)_*=(j_X)_*\circ(\mu|_{Y\setminus E'})_*$ and $(j_Y)_*$ is surjective, it follows that $\mu_*$ is an isomorphism.
    \end{proof}
\end{enumerate}
\hspace{2pt} In complex dimensions at most three, the virtual abelianness of the fundamental group of a compact Calabi--Yau manifold in Fujiki class~$\mathcal{C}$ follows by combining~\cite[Theorem~1.3 and Proposition~3.9]{campana2014abelianity} with~\cite[Corollary~3.32]{campana2004orbifolds}. A recent preprint \cite{OpenAIAbelianitySpecialKahler2026}, which claims a proof of Campana's abelianity conjecture~\cite[Conjecture~7.1]{campana2004orbifolds} in arbitrary dimensions, allows us to extend this conclusion to all dimensions.
\begin{thm}
Let $X$ be a compact Calabi--Yau manifold in Fujiki class~$\mathcal{C}$. Then $\pi_1(X)$ is virtually abelian, and hence amenable. \label{thm:CY-amenability}
\end{thm}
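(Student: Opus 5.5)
The plan is to reduce the statement to Campana's abelianity conjecture for \emph{special} compact Kähler manifolds, applied to a Kähler model of $X$. Since $X$ is in Fujiki class~$\mathcal{C}$, I fix a modification $\mu:Y\longrightarrow X$ with $Y$ a compact Kähler manifold. By Lemma~\ref{10}, $\mu_*:\pi_1(Y)\longrightarrow\pi_1(X)$ is an isomorphism, so it suffices to prove that $\pi_1(Y)$ is virtually abelian.

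The first substantive step is to show that $Y$ is special in Campana's sense. The Kodaira dimension is a bimeromorphic invariant (item~\textbf{(\textit{a})} above), and $K_X\simeq\mathcal{O}_X$ gives $\kappa(X)=0$, hence $\kappa(Y)=0$. Campana proved in \cite{campana2004orbifolds} that compact Kähler manifolds (indeed class~$\mathcal{C}$ manifolds) with $\kappa=0$ are special. Concretely, if $Y$ admitted a Bogomolov sheaf $L\subset\Omega^p_Y$ with $\kappa(Y,L)=p>0$, then $K_Y$ would contain enough positivity to force $\kappa(Y)\geq p$. Specialness is itself a bimeromorphic notion, so this can be phrased equally well on $X$ or on $Y$.

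Next I invoke Campana's abelianity conjecture \cite[Conjecture~7.1]{campana2004orbifolds}: the fundamental group of a special compact Kähler manifold is virtually abelian. In dimension at most three this is available unconditionally. One combines \cite[Theorem~1.3 and Proposition~3.9]{campana2014abelianity}, which handle $\kappa=0$ and the relevant low-dimensional cases, with \cite[Corollary~3.32]{campana2004orbifolds}, which identifies $\kappa=0$ manifolds as special. In arbitrary dimension I rely on the claimed proof in \cite{OpenAIAbelianitySpecialKahler2026}. Either way, $\pi_1(Y)$, and hence $\pi_1(X)$, is virtually abelian.

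Amenability then follows formally. A finitely generated abelian group has polynomial growth, so it is amenable by Remark~\ref{3}. Amenability passes from a finite-index subgroup to the ambient group, for example by averaging an invariant mean over cosets. The only genuine obstacle is the abelianity step itself. The rest consists of bookkeeping of bimeromorphic invariance (Lemma~\ref{10} and $\kappa$) and the implication $\kappa=0\Rightarrow$ special. In writing the argument, I would take care to state clearly that the all-dimensional conclusion is conditional on \cite{OpenAIAbelianitySpecialKahler2026}.
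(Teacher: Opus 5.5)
Your proposal is correct and follows the paper's proof essentially step by step: a Kähler modification $\mu:Y\to X$, the equality $\kappa(Y)=\kappa(X)=0$, specialness of $Y$ via Campana's theorem (the paper cites \cite[Theorem~5.1]{campana2004orbifolds}), the claimed abelianity result of \cite{OpenAIAbelianitySpecialKahler2026}, and Lemma~\ref{10} to transfer virtual abelianity back to $\pi_1(X)$. Your ``concretely'' gloss on why $\kappa=0$ forces specialness is only a heuristic, not an argument. Since the step is carried by the citation to Campana, this does no harm, and the same goes for your explicit flagging that the all-dimensional conclusion is conditional.
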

\begin{proof}
Since $X$ belongs to Fujiki class~$\mathcal{C}$, there exists a modification $\mu:Y\longrightarrow X$ from a compact Kähler manifold $Y$. As modifications preserve the Kodaira dimension, we have $\kappa(Y)=\kappa(X)=0$. By~\cite[Theorem~5.1]{campana2004orbifolds}, $Y$ is therefore special. Applying the result claimed in~\cite{OpenAIAbelianitySpecialKahler2026} to $Y$, we conclude that $\pi_1(Y)$ is virtually abelian. Since modifications induce isomorphisms on fundamental groups, we have $\pi_1(X)\simeq\pi_1(Y)$, so $\pi_1(X)$ is also virtually abelian and hence amenable. In particular, no compact Calabi--Yau manifold in Fujiki class~$\mathcal{C}$ is Calabi--Yau hyperbolic by Proposition~\ref{9}.
\end{proof}
\hspace{2pt} We now turn to non-Kähler Calabi--Yau manifolds and examine their hyperbolicity in the sense of CY-hyperbolicity introduced in this paper.
\begin{ex}
    \begin{enumerate}
        \item The only compact Calabi--Yau surfaces are complex tori, K3 surfaces, and primary Kodaira surfaces. The first two are Kähler and hence are not CY-hyperbolic by the preceding corollary. A primary Kodaira surface has a fundamental group $\Gamma$ admitting the following presentation (see~\cite{kodaira1964structure}, \S6):
        
        \[
        \Gamma=\langle\delta_1,\delta_2,\delta_3,\gamma\hspace{1pt}\lvert\hspace{1pt}[\delta_1,\delta_2]=\delta_3^r\hspace{1pt},\hspace{2pt}[\delta_i,\delta_3]=[\delta_i,\gamma]=1\hspace{1pt},\hspace{2pt}\text{for }i=1,2,3\rangle\hspace{2pt},\quad r\in\N_{>0}.
        \]
        
        The group $\Gamma$ is two-step nilpotent and finitely generated. It is a classical result that every finitely generated nilpotent group has polynomial growth~\cite{milnor1968growth,wolf1968growth}, hence amenable by Remark~\ref{3}. By our characterization in Proposition~\ref{9} of CY-hyperbolicity, no compact Calabi--Yau surface is CY-hyperbolic. Consequently, complex dimension three is the lowest possible dimension for a CY-hyperbolic manifold.
        \item The Friedman--Lu--Tian complex structures on $\mathop{\#}_k(\Sp^3\times\Sp^3)$, for $k\geq 2$, and the Fine--Panov complex structures on $2(\Sp^3\times\Sp^3)\mathop{\#}(\Sp^2\times\Sp^4)$ yield Calabi--Yau manifolds. However, since these manifolds are simply connected, they are not CY-hyperbolic.
        \item Let $X=G/\Gamma$ be a compact complex parallelizable manifold, where $G$ is a simply connected complex Lie group and $\Gamma\subset G$ is a cocompact lattice. The CY-hyperbolicity of $X$ is therefore determined by the amenability of $\Gamma$. For instance, consider the Iwasawa manifold $X=H_\C/\Gamma$. Since $H_{\C}$ is nilpotent, so is $\Gamma$, and hence $\Gamma$ is amenable. Consequently, the Iwasawa manifold is not CY-hyperbolic.
        
        In contrast, let $\Gamma\subset \mathrm{SL}_2(\C)$ be a torsion-free cocompact lattice. Since $\mathrm{SL}_2(\C)$ is a non-compact semisimple Lie group, the Borel density theorem implies that $\Gamma$ is Zariski dense. By the Tits alternative, $\Gamma$ contains a non-abelian free subgroup and is therefore non-amenable (see~\cite{borel1960density,tits1972free}). Consequently, the compact complex parallelizable threefold $\mathrm{SL}_2(\C)/\Gamma$ is CY-hyperbolic.
        
        More generally, every \textbf{Yachou manifold} $X=G/\Gamma$, where $G$ is a simply connected semisimple complex Lie group and $\Gamma\subset G$ is a cocompact lattice~\cite{yachou1998varietes}, is CY-hyperbolic, since $\Gamma\simeq\pi_1(X)$ is non-amenable by the same arguments. Moreover, Yachou manifolds are known to be \textbf{degenerate balanced}~\cite[Proposition 5.4]{popovici2015aeppli}: they admit a Hermitian metric $\omega$ such that $\omega^{n-1}$ is $d$-exact (proved in~\cite[Proposition 18]{yachou1998varietes}). Hence, they are also balanced hyperbolic in the sense of~\cite[Definition 2.1]{marouani2023balanced}.

        However, no complex Lie group is Brody hyperbolic. Indeed, if $G$ is a complex Lie group with Lie algebra $\mathfrak g$, then any nonzero element $\xi\in\mathfrak g$ defines a nonconstant entire curve $f_\xi:\C\longrightarrow G$, $z\longmapsto\exp(z\xi)$. Consequently, no compact complex-parallelizable manifold is Brody hyperbolic (or, equivalently, Kobayashi hyperbolic, by Brody's theorem~\ref{4}).
        \item The generalized Calabi--Gray manifolds constructed by T. Fei~\cite{fei2024remarks} provide examples of Calabi--Yau manifolds whose CY-hyperbolicity is determined by the fundamental group of the base. Indeed, since such a manifold $X$ is diffeomorphic to $M\times Y$, where $M$ is a compact hyperkähler manifold, we have $\pi_1(X)\simeq\pi_1(M)\times\pi_1(Y)$. As $\pi_1(M)$ is amenable, our characterization~\ref{9} of CY-hyperbolicity implies that $X$ is CY-hyperbolic if and only if $\pi_1(Y)$ is non-amenable.
        
        In the classical Calabi--Gray construction~\cite{calabi1958construction,gray1969vector}, the base $Y$ is a compact Riemann surface of genus $g$ arising as a non-flat minimal surface immersed in a flat three-torus, and $\varphi: Y\longrightarrow\CP^1$ is its Gauss map. Since every compact minimal surface of genus $g=1$ in a flat three-torus is flat, while Meeks's result~\cite[Corollary~3.1]{meeks1990theory} excludes compact minimal surfaces of genus $g=2$, the non-flatness of the surface in question forces $g\geq 3$. Consequently, $\pi_1(Y)$ is non-amenable, and the resulting manifold is CY-hyperbolic.
        
        In contrast, for higher-dimensional bases, the fundamental group may be amenable. For instance, the simply connected elliptic surface $Y=E(2m+2)$, with elliptic fibration $\varphi:Y\longrightarrow\CP^1$, satisfies $K_Y\simeq\varphi^*\mathcal{O}_{\CP^1}(2m)$ by the canonical bundle formula for elliptic surfaces~\cite{kodaira1964structure}. Hence it meets Fei's canonical-bundle condition, but $\pi_1(Y)=0$, so the associated generalized Calabi--Gray manifold is not CY-hyperbolic.
        \item \textbf{\textit{p}-contact metric hyperbolicity.} A compact complex manifold $X$ of dimension $n=2p+1$, endowed with a holomorphic $p$-contact structure $\Gamma$, is called \textbf{\textit{p}-contact metrically hyperbolic}~\cite[Definition 6.1]{kasuya2025properties} if there exist a Hermitian metric $\omega$ and a real $d$-closed, $\widetilde{d}$(bounded) $(p,p)$-form $\Omega$ such that:
        \[
        \frac{\omega^p}{p!}=i^{p^2}\Gamma\wedge\overline{\Gamma}+\Omega.
        \]
        \hspace{2pt} \hspace{2pt} Compact quotients of $\mathrm{SL}_2(\C)$ provide examples of Calabi--Yau hyperbolic manifolds carrying holomorphic contact structures that are not $1$-contact metrically hyperbolic. Indeed, let $X=\mathrm{SL}_2(\C)/\Gamma$ be a compact quotient by a cocompact torsion-free lattice $\Gamma\subset\mathrm{SL}_2(\C)$. Then $X$ carries a holomorphic contact structure defined by a holomorphic $1$-form $\alpha$. Let $\pi:G\longrightarrow X$ be the quotient map, where $G=\mathrm{SL}_2(\C)$. Choose a nonzero vector $v\in\operatorname{Ker}(\pi^*\alpha)_e\subset T_eG$, where $e$ denotes the identity element of $G$. Let $V$ be the right-invariant holomorphic vector field on $G$ determined by $v$. Since $V$ is invariant under the right action of $\Gamma$, it descends to a nowhere-vanishing holomorphic vector field on $X$, which we also denote by $V$. Consider the holomorphic function $h:=\alpha(V)\longrightarrow\C$. Since $X$ is compact and connected, $h$ is constant. Moreover, by our choice of $v$, we have $h(\pi(e))=(\pi^*\alpha)_e(v)=0$, and hence $\alpha(V)\equiv0$ on $X$. Fixing $g_0\in G$, we can therefore define a nonconstant holomorphic immersion $f:\C\longrightarrow X$, $f(z)=\pi\bigl(\exp(zv)g_0\bigr)$, which is \textbf{horizontal} since $f'(z)=V_{f(z)}$, i.e. it satisfies $f^*\alpha=0$. \cite[Theorem~6.4]{kasuya2025properties} implies that $X$ is not $1$-contact metrically hyperbolic. It remains an open question whether the converse holds:
        \begin{qst}
            Does $p$-contact metric hyperbolicity imply Calabi--Yau hyperbolicity?
        \end{qst}
    \end{enumerate}
\end{ex}
\hspace{2pt} It is worth noting that, although Corollary~\ref{16} and Theorem~\ref{thm:CY-amenability} impose restrictions on compact Calabi--Yau hyperbolic manifolds, they do not preclude their fundamental groups from being Kähler groups: both possibilities occur among the examples discussed above. On the one hand, a cocompact torsion-free lattice $\Gamma\subset\mathrm{SL}_2(\mathbb{C})$ is not a Kähler group, since it is isomorphic to the fundamental group of a closed real hyperbolic $3$-manifold, which cannot be a Kähler group by a theorem of J. Carlson and D. Toledo \cite{carlson1989harmonic}. On the other hand, Calabi--Yau hyperbolic manifolds arising as generalized Calabi--Gray manifolds can have Kähler fundamental groups. For instance, the manifold resulting from the classical Calabi--Gray construction is diffeomorphic to the product of a compact Riemann surface of genus $g\geq 3$ and a compact hyperkähler manifold, hence its fundamental group is Kähler. 

\hspace{2pt} Calabi--Yau hyperbolic manifolds arising from T. Fei's construction of generalized Calabi--Gray manifolds are likewise not Kobayashi hyperbolic.
\begin{thm}
Let $X=Y\times_{\CP^1}\mathcal{Z}_M$ be a generalized Calabi--Gray manifold. Then $X$ is Kobayashi non-hyperbolic. \label{15}
\end{thm}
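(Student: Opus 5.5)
By Brody's theorem (Theorem~\ref{4}), it suffices to exhibit a nonconstant holomorphic map $f:\C\longrightarrow X$, since $X$ is compact. The natural source of entire curves is the hyperkähler fiber $M$: the projection $p:X\longrightarrow Y$, $(y,z)\longmapsto y$, has fiber over $y\in Y$ equal to $\pi^{-1}(\varphi(y))\subset\mathcal{Z}_M$. This fiber is $M$ equipped with the complex structure $I_{\varphi(y)}$ from the twistor family. So $X$ contains complex submanifolds $X_y:=\{y\}\times\pi^{-1}(\varphi(y))$, each biholomorphic to a compact hyperkähler manifold $(M,I_{\varphi(y)})$. An entire curve in one of these fibers gives an entire curve in $X$ via the inclusion $X_y\hookrightarrow X$.

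The key step is to show that some compact hyperkähler manifold in the twistor family contains a nonconstant entire curve. I would take the following route. The twistor family contains complex structures $I_t$ for which $(M,I_t)$ is not Kobayashi hyperbolic. For instance, $(M,I_t)$ is projective for a dense set of $t\in\CP^1$, and a projective hyperkähler manifold contains rational curves or at least entire curves. More robustly, I would invoke Kamenova--Lu--Verbitsky~\cite{kamenova2014kobayashi}: the Kobayashi pseudometric vanishes identically on K3 surfaces and on those hyperkähler manifolds they treat, and they also show that no compact hyperkähler manifold is Kobayashi hyperbolic. Then Brody's theorem gives a nonconstant entire curve $g:\C\longrightarrow(M,I_{\varphi(y)})$ for every $y$.

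This is the main obstacle: we need non-hyperbolicity of \emph{every} (or at least one attained) fiber $(M,I_{\varphi(y)})$, not merely of a generic member of the twistor family. Since $\varphi$ is non-constant on a compact $Y$ (and $\CP^1$ is connected), $\varphi$ is surjective. Hence every $t\in\CP^1$ occurs as $\varphi(y)$ for some $y$, and it suffices to find a single $t$ with $(M,I_t)$ non-hyperbolic. I would first try the density of projective (or at least non-hyperbolic) members in the twistor line, which follows from the Kamenova--Lu--Verbitsky argument for the non-hyperbolicity of all compact hyperkähler manifolds. Failing that, a fallback is available: since Kobayashi hyperbolicity is an open condition in families (Brody), and the twistor family contains members with large Picard number, and hence containing entire curves, at least one fiber is non-hyperbolic.

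Finally, with such $y$ and $g$, I define $f:=\iota_y\circ g:\C\longrightarrow X$, where $\iota_y:(M,I_{\varphi(y)})\simeq X_y\hookrightarrow X$ is the holomorphic inclusion of the fiber. To see that $\iota_y$ is holomorphic, note that $X_y$ is the fiber $p^{-1}(y)$ of the holomorphic map $p$, which is a submersion because $\pi$ is a submersion. Then $f$ is a nonconstant entire curve in $X$, and by Theorem~\ref{4} $X$ is Kobayashi non-hyperbolic.
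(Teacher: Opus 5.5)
\paragraph{Where the proposal stands.} Your reduction is the same as the paper's. The fibres $p^{-1}(y)$ are the twistor fibres $(M,I_{\varphi(y)})$, and $\varphi$ is surjective. So a single twistor fibre carrying a nonconstant entire curve suffices, and Brody's theorem finishes. The gap is that the one non-formal input, the existence of such a fibre, is never actually secured.

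The paper gets it directly from Campana's theorem (Theorem~1 of \cite{Campana1992twistor}): in the twistor family of a compact hyperkähler manifold, at least one fibre contains a nonconstant entire curve. This is exactly the ``single $t$'' you say suffices.

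Your ``more robust'' option, that no compact hyperkähler manifold is Kobayashi hyperbolic, is a true theorem. It is usually credited to Verbitsky (via ergodicity of complex structures), and its proof uses Campana's twistor theorem as input. Cited as such it would close the argument; every fibre would then work and surjectivity of $\varphi$ would be unnecessary. But right after asserting it for every fibre, you call this step the main obstacle and fall back on other arguments. So as written, the proof rests on those fallbacks.

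\paragraph{Why the fallbacks fail.} Density of projective fibres in the twistor line is true. The next step, that a projective hyperkähler manifold contains rational or entire curves, is not available in general. Rational curves are known for K3 surfaces (Mori--Mukai) and certain deformation types. ``Entire curves'' is precisely the non-hyperbolicity statement this route was meant to avoid.

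The last fallback is logically off. Brody's openness of the hyperbolic locus only says the non-hyperbolic fibres form a closed set, which cannot produce one. Moreover, a twistor family need not contain members of large Picard number at all. For irreducible $M$, an integral class is of type $(1,1)$ for $I_t$ exactly when its Beauville--Bogomolov projection to the positive $3$-plane $W\subset H^2(M,\R)$ lies on the line spanned by the Kähler class of $I_t$; here $W$ is spanned by the three Kähler classes. For a generic hyperkähler metric, $W^{\perp}$ meets no rational $2$-plane nontrivially, so every fibre has Picard number at most one.

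The missing ingredient is therefore Campana's twistor theorem, or the general non-hyperbolicity theorem stated outright as a citation. With it, your argument coincides with the paper's.
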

\begin{proof}
Let $p_Y:X\longrightarrow Y$ denote the natural projection onto the first factor. For any $y\in Y$, the fiber of $p_Y$ over $y$ is biholomorphic to the fiber $\pi^{-1}(\varphi(y))$ of the twistor projection. By the construction of the twistor space, this fiber is the hyperkähler manifold $M$ equipped with the complex structure parametrized by $\varphi(y)$.

Since $\varphi$ is nonconstant and holomorphic, the open mapping theorem implies that $\varphi(Y)$ is open in $\CP^1$. Moreover, as $Y$ is compact, $\varphi(Y)$ is compact and hence closed in $\CP^1$. Since $\CP^1$ is connected, it follows that $\varphi(Y)=\CP^1$.

By F. Campana's theorem~\cite[Theorem 1]{Campana1992twistor}, at least one fiber of the twistor projection $\pi$ contains a nonconstant entire holomorphic curve. Thus, there exists $t_0\in\CP^1$ and a nonconstant holomorphic map $g:\C\longrightarrow\pi^{-1}(t_0)$. By the surjectivity of $\varphi$, we can choose $y_0\in Y$ such that $\varphi(y_0)=t_0$. The fiber $p_Y^{-1}(y_0)$ is then biholomorphic to $\pi^{-1}(t_0)$, so $g$ composes with the natural inclusion $p_Y^{-1}(y_0)\xhookrightarrow{\quad} X$ of the fiber into $X$, inducing a nonconstant entire curve in $X$. Consequently, $X$ is Kobayashi non-hyperbolic by Brody's criterion~\ref{4}.
\end{proof}
\begin{rmk}
Assuming conjecture~\ref{1}, claimed to be proved in the preprint~\cite{OAI-Kobayashi-2026} but not yet independently validated, the theorem admits a shorter alternative proof. If $X$ were Kobayashi hyperbolic, then each fiber $M_y:=p_Y^{-1}(y)$ would also be Kobayashi hyperbolic. But $M_y$ is a compact Kähler manifold with trivial canonical bundle, yielding a contradiction.
\end{rmk}
\hspace{2pt} The examples discussed above suggest that CY-hyperbolicity and Kobayashi hyperbolicity may be incompatible. This leads us to the following natural question.
\begin{qst}
Does there exist a compact Calabi--Yau hyperbolic manifold that is also  Kobayashi hyperbolic? \label{14}
\end{qst}
\hspace{2pt} Assuming the validity of the proof of the GSS conjecture in~\cite{OAI-GSS-2026}, a stronger obstruction to Kobayashi hyperbolicity holds in complex dimension two:
\begin{thm}
Assuming the \textbf{GSS conjecture}, every compact Kobayashi hyperbolic complex surface is Kähler.
\end{thm}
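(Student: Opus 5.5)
We argue by contradiction, using the Enriques--Kodaira classification of compact complex surfaces. Let $S$ be a compact Kobayashi hyperbolic surface and suppose $S$ is not Kähler. By Brody's theorem~\ref{4}, $S$ contains no nonconstant entire curves; in particular it contains no rational curves and no elliptic curves (images of $\C\to\C/\Lambda$), and no compact complex tori embedded in it. First, $S$ is minimal: a $(-1)$-curve is a $\CP^1$. A compact surface is Kähler if and only if $b_1(S)$ is even. So a non-Kähler minimal surface has odd $b_1$, and the classification leaves exactly these candidates: \emph{Kodaira surfaces} (primary or secondary), \emph{properly elliptic surfaces with odd $b_1$} (Kodaira dimension $1$), \emph{non-Kähler elliptic surfaces of Kodaira dimension $-\infty$}, and \emph{surfaces of class VII} ($\kappa=-\infty$, $b_1=1$). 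Surfaces of general type are projective, hence Kähler, so they do not occur.

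The plan is to exclude the cases in turn.
\begin{enumerate}[label=(\roman*)]
\item \emph{Elliptic cases.} Kodaira surfaces, and all non-Kähler surfaces with $\kappa\geq 0$, are elliptic fibrations over a curve (Kodaira surfaces even have universal cover $\C^2$). Any smooth fiber is an elliptic curve $\C/\Lambda$, which gives a nonconstant entire curve $\C\to\C/\Lambda\hookrightarrow S$. This contradicts hyperbolicity.
\item \emph{Class VII with $b_2=0$.} By Bogomolov's theorem (Li--Yau--Zheng, Teleman), such a surface is a Hopf surface or an Inoue surface. Hopf surfaces are quotients of $\C^2\setminus\{0\}$ and contain elliptic curves, the images of lines through the origin. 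Inoue surfaces have universal cover $\Hyp\times\C$, and the projection $\{pt\}\times\C\to S$ is a nonconstant entire curve. For the Inoue surfaces $S_M$ the $\C$-leaves descend to an immersion of $\C$ via the holomorphic foliation by the $\C$-direction. Both subcases are therefore excluded.
\item \emph{Class VII with $b_2>0$.} Here we invoke the GSS conjecture: every minimal class VII surface with $b_2>0$ contains a global spherical shell, so it is a Kato surface. Kato surfaces always contain rational curves; indeed, by Dloussky's results there are exactly $b_2$ rational curves, forming cycles or trees. Such a curve gives a nonconstant map $\CP^1\to S$, hence an entire curve $\C\to\CP^1\to S$, which again contradicts Brody hyperbolicity.
\end{enumerate}

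Having excluded every non-Kähler case, $S$ must be Kähler. We expect the main obstacle to be the bookkeeping in the classification. One must ensure that every minimal non-Kähler class is covered, including the non-minimal reduction via Castelnuovo's contraction. One must also cite, for each class, a reference exhibiting either a rational curve or an elliptic curve or an explicit $\C$-leaf; for Inoue surfaces the entire curve comes from the $\C$-factor of the universal cover rather than from a compact curve. The GSS step is used as a black box through~\cite{OAI-GSS-2026} together with the known fact that GSS surfaces contain rational curves.
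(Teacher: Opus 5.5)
Your proposal is correct and follows essentially the same route as the paper: you reduce to minimal surfaces (exceptional curves are rational), dispose of Kodaira and non-Kähler elliptic surfaces via their elliptic fibres, dispose of class VII surfaces with $b_2=0$ via Bogomolov's theorem and explicit entire curves in the universal covers $\C^2\setminus\{0\}$ and $\Hyp\times\C$, and dispose of those with $b_2>0$ via the GSS conjecture and the $b_2$ rational curves of a Kato surface. The only slip is minor: in a Hopf surface not every line through the origin descends to an elliptic curve (only invariant ones do, e.g.\ the coordinate axes for a diagonal contraction), but every such line still gives a nonconstant entire curve such as $z\longmapsto(e^z,0)$ composed with the covering map, exactly as in the paper, so the argument is unaffected.
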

\begin{proof}
We use the Enriques--Kodaira classification, according to which the non-Kähler compact complex surfaces are either non-Kähler elliptic surfaces, Kodaira surfaces, or surfaces of class~VII (see, for instance,~\cite{barth2004compact}). Non-Kähler elliptic surfaces admit elliptic fibrations and therefore contain smooth elliptic curves. Kodaira surfaces likewise admit elliptic fibrations. Since every elliptic curve is covered by $\C$, these surfaces cannot be Kobayashi hyperbolic. 

It remains to consider surfaces of class~VII. These are compact complex surfaces with first Betti number equal to one. If the second Betti number vanishes, the classification theorem of Bogomolov implies that a minimal surface of class~VII is either a \textit{Hopf surface} or an \textit{Inoue surface}~\cite{bogomolov1976classification,inoue1974surfaces}. Neither type is Kobayashi hyperbolic. Indeed, the universal cover of a Hopf surface is biholomorphic to $\C^2\setminus\{0\}$, which contains, for instance, the nonconstant entire curve $z\longmapsto (e^z,0)$. Similarly, the universal cover of an Inoue surface is biholomorphic to $\Hyp\times\C$, where $\Hyp$ denotes the upper half-plane. Fixing $w_0\in\Hyp$, the map $z\longmapsto (w_0,z)$ defines a nonconstant entire curve in the universal cover. Composing these maps with the corresponding covering projections yields nonconstant entire curves in the surfaces themselves.

The remaining case consists of minimal surfaces of class~VII with positive second Betti number. The \textbf{GSS conjecture} asserts that every such surface is a \textit{Kato surface}, i.e. a compact complex surface with positive first Betti number that has a \textit{global spherical shell} (see~\cite{dloussky2003class}), i.e. a holomorphically embedded neighborhood of $\Sp^3$ in $\C^2\setminus\{0\}$ whose complement is connected. In particular, a minimal Kato surface contains exactly $b_2$ irreducible rational curves. Since rational curves preclude Kobayashi hyperbolicity, these surfaces are Kobayashi non-hyperbolic either. Finally, nonminimal surfaces contain exceptional rational curves and are therefore Kobayashi non-hyperbolic.   
\end{proof}
\hspace{2pt} In higher dimensions, the question remains open. Assuming the validity of the proof of Kobayashi's conjecture~\ref{1} given in~\cite{OAI-Kobayashi-2026}, the weaker form of S. Kobayashi's conjecture for Calabi--Yau manifolds follows: no compact Kähler Calabi--Yau manifold is Kobayashi hyperbolic. This conclusion should be distinguished from the original conjecture of S. Kobayashi stating that the Kobayashi pseudodistance vanishes identically on every compact Kähler Calabi--Yau manifold.

\hspace{2pt} Since CY-hyperbolic manifolds are non-Kähler, the Kähler result above does not settle our original Question~\ref{14}.

\hspace{2pt} In parallel with Question~\ref{14}, we aim to highlight the relation between CY-hyperbolicity and a corresponding geometric notion based on subexponential growth introduced in \cite[Definition~2.3]{marouani2023balanced} and subsequently extended to higher dimensions in \cite{haggui2023compact,kasuya2025partially,khelifati2026deformations}. In the spirit of the classical implication from Kähler hyperbolicity to Kobayashi hyperbolicity, we investigate whether CY-hyperbolicity rules out non-degenerate holomorphic maps from $\mathbb{C}^n$ into $X$ satisfying this subexponential growth condition.

\hspace{2pt} Let $(X,\omega)$ be a compact Hermitian manifold with $\dim_\C X=n\geq 2$, and let $f:\C^n\longrightarrow X$ be a holomorphic map which is \textit{non-degenerate} at some point $z_0\in\C^n$, i.e. $d_{z_0}f:\C^n\longrightarrow T^{1,0}_{z_0}X$ is of maximal rank. Set: $\Sigma_f:=\{z\in\C^n\hspace{1pt}:\hspace{1pt}\operatorname{rank}\left(d_{z}f\right)\leq n-1\}$; this is a proper analytic subset of $\C^n$. Then $f^*\omega$ defines a \textit{degenerate metric} on $\C^n$ with degeneration set $\Sigma_f$ ; that is $f^*\omega\geq 0$ on $\C^n$ and defines a genuine Hermitian metric on $\C^n\setminus\Sigma_f$. For any $r>0$, we define the ($\omega,f$)\textbf{-volume} of the ball $\B_r\subset\C^n$ of radius $r$ centered at $0$ to be:
\begin{equation}
    \operatorname{Vol}_{\omega,f}(\B_r):=\int_{\B_r}f^*\omega_n>0\hspace{2pt},\quad\qquad\text{where }\hspace{2pt}\omega_n=\dfrac{\omega^n}{n!}
\end{equation}
For any $z\in\C^n$, let $\beta(z)=\lvert z\rvert^2$ be its squared Euclidean norm. By definition of the Hodge star operator induced by the Hermitian metric $f^*\omega$ on $\C^n\setminus\Sigma_f$, we have: 
\begin{equation}
    \dfrac{d\beta(z)}{\lvert d\beta(z)\rvert_{f^*\omega}}\wedge*_{f^*\omega}\left(\dfrac{d\beta(z)}{\lvert d\beta(z)\rvert_{f^*\omega}}\right)=f^*\omega_n(z)\hspace{2pt},\qquad\text{for any } z\in\C^n\setminus\Sigma_f
\end{equation}
This implies that the ($2n-1$)-form $d\sigma_{\omega,f}:=*_{f^*\omega}\left(\dfrac{d\beta(z)}{\lvert d\beta(z)\rvert_{f^*\omega}}\right)$ defines an area measure on spheres $\Sp_r:=\partial\B_r\subset\C^n$. In particular, we define the ($\omega,f$)\textbf{-area} of the sphere $\Sp_r$ of radius $r>0$ by:
\begin{equation}
    A_{\omega,f}(\Sp_r):=\int_{\Sp_r}d\sigma_{\omega,f,r}>0\hspace{2pt},\qquad\text{ where }\hspace{2pt}d\sigma_{\omega,f,r}=\left(d\sigma_{\omega,f}\right)_{\lvert\Sp_r}
\end{equation}
\begin{dfn}[Definition 2.3, \cite{marouani2023balanced} and Definition 2.3, \cite{kasuya2025partially}]
In the same setting, we say that $f$ has \textbf{subexponential growth} if:
    \renewcommand{\labelenumi}{(\textit{\roman{enumi}})}
    \begin{enumerate}
    \item There exist $C_0$ and $r_0>0$ such that: $\int_{\Sp_r}\lvert d\beta\rvert_{f^*\omega}d\sigma_{f,\omega,r}\leq C_0 r \operatorname{Vol}_{\omega,f}(\B_r)$, $\forall\hspace{1pt}r>r_0$.
    \item For any $\lambda>0$, we have: $\underset{t\rightarrow+\infty}{\limsup}\left(\dfrac{t}{\lambda}-\log\left(\int_0^t\operatorname{Vol}_{\omega,f}(\B_r)dr\right)\right)=+\infty$.
    \end{enumerate} \label{20}
\end{dfn}
\begin{dfn}\cite[$p=n$]{haggui2023compact,kasuya2025partially,khelifati2026deformations}
We say that $X$ is $n$\textbf{-cyclically hyperbolic} if there is no holomorphic map $f:\C^n\longrightarrow X$ that is non-degenerate at some point $z_0\in\C^n$ and has subexponential growth.
\end{dfn}
We now prove the following:
\begin{thm}
Any compact complex manifold $X$ with non-amenable fundamental group $\pi_1(X)$ is $n$-cyclically hyperbolic, where $n=\dim_\C X$. In particular, any Calabi--Yau manifold $X$ with $\dim_\C X=n$ is $n$-cyclically hyperbolic.
\end{thm}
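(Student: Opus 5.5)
The idea is to reduce the statement to the known implication ``$\widetilde{d}$(bounded) volume form $\Rightarrow$ $n$-cyclic hyperbolicity'', which is the $p=n$ case of the argument in \cite{marouani2023balanced,haggui2023compact,kasuya2025partially}. By Theorem~\ref{8}, non-amenability of $\pi_1(X)$ means $\widetilde X$ is open at infinity. Fix any Hermitian metric $\omega$ on $X$. Since $\widetilde X$ has bounded geometry (as the universal cover of a compact manifold), there is an $L^\infty_{\widetilde\omega}$-bounded $(2n-1)$-form $\zeta$ on $\widetilde X$ with $\pi^*\omega_n=d\zeta$.

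Suppose, for contradiction, that $f:\C^n\to X$ is holomorphic, non-degenerate at $z_0$, and of subexponential growth. Because $\C^n$ is simply connected, $f$ lifts to $\widetilde f:\C^n\to\widetilde X$ with $f=\pi\circ\widetilde f$. Then
\[
f^*\omega_n=\widetilde f^*\pi^*\omega_n=d(\widetilde f^*\zeta).
\]
Stokes' theorem on $\B_r$ gives
\[
\operatorname{Vol}_{\omega,f}(\B_r)=\int_{\Sp_r}\widetilde f^*\zeta .
\]
The pointwise bound on the integrand is the standard one. At points of $\Sp_r\setminus\Sigma_f$, measure $\widetilde f^*\zeta$ against the metric $f^*\omega=\widetilde f^*\widetilde\omega$. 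Its norm is bounded by $\|\zeta\|_{L^\infty}$, because $\widetilde f$ is a local isometry from $(\C^n\setminus\Sigma_f,f^*\omega)$ onto its image. Restricting to the sphere, the restriction of a $(2n-1)$-form to $\Sp_r$ is controlled by $|\cdot|_{f^*\omega}\,d\sigma_{\omega,f,r}$. This yields
\[
\operatorname{Vol}_{\omega,f}(\B_r)\le C\,A_{\omega,f}(\Sp_r),\qquad C=\|\zeta\|_{L^\infty_{\widetilde\omega}}.
\]
The set $\Sigma_f$ has measure zero on almost every sphere, so it causes no trouble.

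The main obstacle is turning this isoperimetric inequality into a differential inequality for $V(r):=\operatorname{Vol}_{\omega,f}(\B_r)$, since $A_{\omega,f}(\Sp_r)$ is not directly $V'(r)$. The coarea formula gives
\[
V'(r)=\int_{\Sp_r}\frac{2r}{|d\beta|_{f^*\omega}}\,d\sigma_{\omega,f,r}.
\]
Set $F(t)=\int_0^t V(r)\,dr$. I will apply Cauchy--Schwarz:
\[
A_{\omega,f}(\Sp_r)^2\le\Big(\int_{\Sp_r}\frac{d\sigma}{|d\beta|}\Big)\Big(\int_{\Sp_r}|d\beta|\,d\sigma\Big)\le \frac{V'(r)}{2r}\,C_0\,r\,V(r).
\]
Here the last step uses condition (i) of Definition~\ref{20}. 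Combined with $V\le C A$, this gives
\[
V(r)^2\le \frac{C^2C_0}{2}\,V'(r)V(r),\qquad\text{so}\qquad V'(r)\ge\frac{2}{C^2C_0}\,V(r)\quad\text{for } r>r_0.
\]
Hence $V$ grows at least like $e^{r/\lambda_0}$ with $\lambda_0=C^2C_0/2$, since $V(r_0)>0$ by non-degeneracy. Integrating, $\log F(t)\ge t/\lambda_0-c$ for large $t$. It follows that $t/\lambda-\log F(t)$ stays bounded above for any $\lambda\ge\lambda_0$. This contradicts condition (ii) of Definition~\ref{20}, so no such $f$ exists and $X$ is $n$-cyclically hyperbolic.

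The ``in particular'' clause follows from Proposition~\ref{9}: a CY-hyperbolic manifold has non-amenable $\pi_1$. Here I read ``any Calabi--Yau manifold'' as ``any CY-hyperbolic manifold'', as in Theorem~C.
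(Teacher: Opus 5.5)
Your proof is correct and follows essentially the same route as the paper. Stokes' theorem with a bounded primitive gives the linear isoperimetric inequality $\operatorname{Vol}_{\omega,f}(\B_r)\le C\,A_{\omega,f}(\Sp_r)$, and Cauchy--Schwarz together with the coarea formula and condition (\textit{i}) turns it into exponential volume growth, contradicting condition (\textit{ii}). The differences are minor and in your favour: you take the bounded primitive of $\pi^*\omega_n$ directly from Brooks's theorem and the open-at-infinity characterization, so your argument covers every compact $X$ with non-amenable $\pi_1(X)$, whereas the paper's write-up starts from a CY-hyperbolic form and the comparison $\omega_n\le C_1\nu_\Omega$. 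Your inequality $V'\ge \frac{2}{C^2C_0}V$, imposed on $V$ itself, also gives the exponential lower bound cleanly, avoiding the degenerate normalization $F(r_0)=0$ in the paper's final step.
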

\begin{proof}
Let $(X,\omega)$ be a compact Calabi--Yau manifold, and let $\pi:\widetilde{X}\longrightarrow X$ be its universal covering map. Let $\Omega$ be a CY-hyperbolic form on $X$, and $\nu_\Omega:=c_n i^{n^2}\Omega\wedge\overline{\Omega}$, where $c_n>0$, its associated volume form. There exists a smooth real $L_{\widetilde{\omega}}^\infty$-bounded $(2n-1)$-form $\zeta$ on $\widetilde{X}$ such that $\widetilde{\nu}_\Omega=d\zeta$.

Assume, by contradiction, that there exists a holomorphic map $f:\C^n\longrightarrow X$ which is non-degenerate at some point and has subexponential growth in the sense of Definition~\ref{20}. Since $\C^n$ is simply connected, $f$ lifts to a holomorphic map $\widetilde{f}:\C^n\longrightarrow\widetilde{X}$ satisfying $\pi\circ\widetilde{f}=f$. The degeneracy locus of $\widetilde{f}$ is precisely $\Sigma_f$. Hence, $f^*\omega$ defines a genuine Hermitian metric on $\C^n\setminus\Sigma_f$. We first make the following observation:
\begin{cl}
Let $\zeta$ be a smooth $\widetilde{\omega}$-bounded $k$-form on $\widetilde{X}$, where $k\leq 2n$. Then $\widetilde{f}^*\zeta$ is bounded with respect to the degenerate metric $f^*\omega$ on $\C^n\setminus\Sigma_f$.
\label{21}
\end{cl}
\begin{proof}
Let $z\in\C^n\setminus\Sigma_f$ and let $\xi_1,\ldots,\xi_k\in T_z\C^n$, then
\begin{equation*}
\left|\widetilde{f}^*\zeta(\xi_1,\ldots,\xi_k)\right|^2_{f^*\omega}=\left|\zeta\bigl(\widetilde{f}_*\xi_1,\ldots,\widetilde{f}_*\xi_k\bigr)\right|^2_{\widetilde{\omega}}\overset{(a)}{\leq}C\prod_{j=1}^k\left|\widetilde{f}_*\xi_j\right|^2_{\widetilde{\omega}}\overset{(b)}{=}C\prod_{j=1}^k|\xi_j|^2_{f^*\omega}
\end{equation*}
where $(a)$ follows from the boundedness of $\zeta$, while $(b)$ follows from $\widetilde{f}^*\widetilde{\omega}=f^*\omega$.
\end{proof}

Since $X$ is compact and both $\omega_n:=\omega^n/n!$ and $\nu_\Omega$ are positive volume forms, there exists a constant $C_1>0$ such that $\omega_n\leq C_1\nu_\Omega$ on $X$. Consequently, for every $r>0$, we have
\begin{equation}
\operatorname{Vol}_{\omega,f}(\B_r)=\int_{\B_r}f^*\omega_n\leq C_1\int_{\B_r}f^*\nu_\Omega.
\label{22}
\end{equation}
Since $\widetilde{\nu}_\Omega=d\zeta$, we obtain
\begin{equation}
f^*\nu_\Omega=\widetilde{f}^*\widetilde{\nu}_\Omega=\widetilde{f}^*(d\zeta)=d(\widetilde{f}^*\zeta).
\label{23}
\end{equation}
Combining (\ref{22}) and (\ref{23}) with Stokes' theorem yields
\begin{align*}
\operatorname{Vol}_{\omega,f}(\B_r)
&\leq C_1\int_{\B_r}d(\widetilde{f}^*\zeta)\\
&=C_1\int_{\Sp_r}\widetilde{f}^*\zeta\\
&\leq C_1B\,A_{\omega,f}(\Sp_r),
\end{align*}
where the last inequality follows from Claim~\ref{21}, since $\zeta$ is $\widetilde{\omega}$-bounded. Here $B>0$ is a constant independent of $r$. Setting $C:=C_1B>0$, we conclude that for every $r>0$
\begin{equation}
\operatorname{Vol}_{\omega,f}(\B_r)\leq C A_{\omega,f}(\Sp_r)
\label{24}
\end{equation}
On the other hand, the Cauchy--Schwarz inequality yields
\begin{equation}
A_{\omega,f}(\Sp_r)^2\leq\left(\int_{\Sp_r}\frac{1}{|d\beta|_{f^*\omega}}\,d\sigma_{\omega,f,r}\right)\left(\int_{\Sp_r}|d\beta|_{f^*\omega}\,d\sigma_{\omega,f,r}\right).
\label{25}
\end{equation}
Moreover, the coarea formula gives
\begin{equation*}
\operatorname{Vol}_{\omega,f}(\B_r)=\int_0^r\left(\int_{\Sp_t}\frac{1}{|d\beta|_{f^*\omega}}\,d\sigma_{\omega,f,t}\right)2t\,dt,
\end{equation*}
where $\beta(z)=|z|^2$. Combining this identity with (\ref{24}) and (\ref{25}), we obtain
\begin{align*}
\operatorname{Vol}_{\omega,f}(\B_r)&\geq\displaystyle\int_0^r\frac{2t\,A_{\omega,f}(\Sp_t)^2}{\int_{\Sp_t}|d\beta|_{f^*\omega}\,d\sigma_{\omega,f,t}}\,dt\\
&\geq\frac{2}{C^2}\displaystyle\int_0^r\frac{t\,\operatorname{Vol}_{\omega,f}(\B_t)^2}{\int_{\Sp_t}|d\beta|_{f^*\omega}\,d\sigma_{\omega,f,t}}\,dt.
\end{align*}
By condition~(\textit{i}) in Definition~\ref{20}, there exist constants $C_0>0$ and $r_0>0$ such that $\int_{\Sp_t}|d\beta|_{f^*\omega}\,d\sigma_{\omega,f,t}\leq C_0t\operatorname{Vol}_{\omega,f}(\B_t)$ for every $t>r_0$. Therefore, for every $r>r_0$, we obtain:

\begin{align}
\operatorname{Vol}_{\omega,f}(\B_r)&\geq\frac{2}{C^2}\int_0^r\frac{t\,\operatorname{Vol}_{\omega,f}(\B_t)^2}{C_0t\operatorname{Vol}_{\omega,f}(\B_t)}\,dt \notag\\
&\geq\frac{2}{C_0C^2}\int_{r_0}^r \operatorname{Vol}_{\omega,f}(\B_t)\,dt=:KF(r)
\label{26}
\end{align}
where $K:=\dfrac{2}{C_0C^2}>0$ and $F(r):=\int_{r_0}^r\operatorname{Vol}_{\omega,f}(\mathbb B_t)\,dt$. Thus, the previous inequality~\ref{26} takes the following form for every $r>r_0$:
\begin{equation*}
    F'(r)\geq KF(r) \xLongrightarrow{\qquad} (\log F(r))'\geq K \overset{(*)}{\xLongrightarrow{\qquad}} F(r)\geq F(r_0)\exp(K(r-r_0))
\end{equation*}
where $(*)$ follows by integration over $[r_0,r]$. However, the subexponential growth condition~(\textit{ii}) in Definition~\ref{20} forces $F(r)=0$ for all $r>r_0$, which in turn yields: $\operatorname{Vol}_{\omega,f}(\B_r)=0$ for every $r>0$. Therefore $f^*\omega=0$, contradicting our non-degeneracy assumption. Hence, $X$ is $n$-cyclically hyperbolic.
\end{proof}
\subsection{Construction of Calabi--Yau hyperbolic manifolds}
\hspace{2pt} Finally, we present some methods to construct new CY-hyperbolic manifolds.
\begin{prop}
Let $X$ be a compact CY-hyperbolic manifold and $Y$ any compact Calabi--Yau manifold. Then $X\times Y$ is CY-hyperbolic. \label{11}
\end{prop}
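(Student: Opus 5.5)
The plan is to use the characterization in Proposition~\ref{9} together with the standard fact that amenability passes to quotients. First, $X\times Y$ is a compact Calabi--Yau manifold. If $\Omega_X$ and $\Omega_Y$ are Calabi--Yau forms on $X$ and $Y$, then $\Omega:=p_X^*\Omega_X\wedge p_Y^*\Omega_Y$ is a holomorphic $(n+m,0)$-form on $X\times Y$. It is nowhere vanishing, so $K_{X\times Y}\simeq p_X^*K_X\otimes p_Y^*K_Y\simeq\mathcal{O}_{X\times Y}$. The product is also compact and connected.

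Next, I compare fundamental groups: $\pi_1(X\times Y)\simeq\pi_1(X)\times\pi_1(Y)$. By Proposition~\ref{9}, $\pi_1(X)$ is non-amenable, and $\pi_1(X)$ is a quotient of $\pi_1(X)\times\pi_1(Y)$ via the projection. Since quotients of amenable groups are amenable (push forward the invariant mean), $\pi_1(X\times Y)$ is non-amenable. Applying Proposition~\ref{9} again shows that $X\times Y$ is CY-hyperbolic. Alternatively, one can note that $\pi_1(X)$ embeds as a subgroup, and subgroups of amenable groups are amenable.

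As a sanity check, and as a direct analytic argument avoiding Brooks' theorem, one could write
\[
\nu_\Omega = c\,p_X^*\nu_{\Omega_X}\wedge p_Y^*\nu_{\Omega_Y}
\]
for a suitable positive constant $c$. The universal cover is $\widetilde{X}\times\widetilde{Y}$. If $\pi^*\nu_{\Omega_X}=d\zeta$ with $\zeta$ bounded, then $\widetilde{\nu}_\Omega=d\bigl(c\,p_X^*\zeta\wedge p_Y^*\widetilde{\nu}_{\Omega_Y}\bigr)$, because $\widetilde{\nu}_{\Omega_Y}$ is closed. This primitive is bounded for the product metric, since $\nu_{\Omega_Y}$ is bounded on compact $Y$.

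I see no real obstacle. The only points needing care are checking that the product form has the correct normalization, namely that $i^{(n+m)^2}$ compared with $i^{n^2}i^{m^2}$, together with the reordering signs, gives a positive multiple. Either way the conclusion is unaffected, so the group-theoretic route is the cleanest one to present.
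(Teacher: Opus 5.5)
Your proof is correct and is essentially the paper's argument: you trivialize $K_{X\times Y}$, use $\pi_1(X\times Y)\simeq\pi_1(X)\times\pi_1(Y)$, and apply Proposition~\ref{9} twice, while also making explicit the step the paper leaves implicit, namely that a group with a non-amenable quotient (or subgroup) is non-amenable. Your direct check via the primitive $c\,p_X^*\zeta\wedge p_Y^*\widetilde{\nu}_{\Omega_Y}$ is also valid: $i^{(n+m)^2}=(-1)^{nm}i^{n^2}i^{m^2}$ exactly cancels the reordering sign $(-1)^{nm}$, so $c=c_{n+m}/(c_nc_m)>0$, and this route has the small advantage of not relying on Brooks' theorem.
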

\begin{proof}
Since $K_X$ and $K_Y$ are trivial, the canonical bundle of $X\times Y$ is also trivial since: $K_{X\times Y}\simeq p_X^*K_X\otimes p_Y^*K_Y\simeq \mathcal{O}_{X\times Y}$, where $p_X$ and $p_Y$ denote the projections of $X\times Y$ onto $X$ and $Y$, respectively. Moreover, $\pi_1(X)$ is non-amenable by Proposition~\ref{9}. Since $\pi_1(X\times Y)\simeq \pi_1(X)\times\pi_1(Y)$, the fundamental group of $X\times Y$ is non-amenable as well. The result therefore follows from Proposition~\ref{9}.
\end{proof}
\hspace{2pt} A useful structural property of CY-hyperbolic manifolds is the following:
\begin{prop}
Let $X$ and $Y$ be compact Calabi--Yau manifolds and let $f:Y\longrightarrow X$ be a finite étale cover. Then $X$ is CY-hyperbolic if and only if $Y$ is CY-hyperbolic.
\end{prop}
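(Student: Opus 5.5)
We reduce the statement to Proposition~\ref{9}, so that it suffices to show that $\pi_1(X)$ is non-amenable if and only if $\pi_1(Y)$ is non-amenable. (Both $X$ and $Y$ are Calabi--Yau by hypothesis, so Proposition~\ref{9} applies to each.)

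Since $f:Y\longrightarrow X$ is a finite étale cover and $Y$ is connected, covering space theory shows that $f_*:\pi_1(Y)\longrightarrow\pi_1(X)$ is injective. Its image $H:=f_*\pi_1(Y)$ has index equal to the degree $d$ of $f$, which is finite. Thus $\pi_1(Y)$ is isomorphic to a finite-index subgroup of $\pi_1(X)$.

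The key group-theoretic step is that a finitely generated group $G$ is amenable if and only if a finite-index subgroup $H\subset G$ is amenable. Both groups are finitely generated, being fundamental groups of compact manifolds.

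For the direction ``$H$ amenable $\Rightarrow$ $G$ amenable'', we use Følner sets. Choose coset representatives $g_1,\dots,g_d$ of $G/H$, so that every $g\in G$ can be written as $g=g_jh$ with $h\in H$. Given Følner sets $F_k$ for $H$, set $E_k:=\bigcup_j g_jF_k$, a disjoint union. For a generator $s$ of $G$, write $sg_j=g_{\sigma(j)}h_j(s)$ with $h_j(s)\in H$. Then $|sE_k\triangle E_k|\le\sum_j|h_j(s)F_k\triangle F_k|$. Since there are finitely many elements $h_j(s)$ and $|E_k|=d|F_k|$, this ratio tends to $0$.

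An alternative route is the invariant-mean definition: average a left-invariant mean on $H$, transported to each coset, over $G/H$. Either version is routine.

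For the converse, ``$G$ amenable $\Rightarrow$ $H$ amenable'', we use the standard fact that subgroups of amenable groups are amenable. Concretely, given a $G$-invariant mean $m$ on $\ell^\infty(G)$, define a mean on $\ell^\infty(H)$ by $\phi\mapsto m(\hat\phi)$. Here $\hat\phi(g_jh):=\phi(h)$ extends $\phi$ constantly along a fixed set of right coset representatives, chosen so that left translation by $H$ preserves the decomposition $G=\bigsqcup Hg_j$.

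We expect no serious obstacle. The only point needing care is the subgroup direction, where one must use right cosets so that the left action of $H$ commutes with the extension. Alternatively, one can avoid algebra entirely via Brooks' theorem (Theorem~\ref{8}) together with the geometry. Since $f$ is a covering, $\widetilde{Y}=\widetilde{X}$, and the lifted metrics $\widetilde{f^*\omega}$ and $\widetilde{\omega}$ coincide. Hence the Cheeger constants agree, and openness at infinity of the common universal cover is the same condition for $X$ and $Y$.

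This geometric route is in fact the cleanest, and we would present it as the main argument. Fix a Hermitian metric $\omega$ on $X$ and put $f^*\omega$ on $Y$. The universal cover of $Y$ is $\widetilde{X}$, with covering map $f\circ\pi_Y=\pi_X$, so both lifted metrics equal $\widetilde{\omega}$. Positivity of $h_{\widetilde{\omega}}(\widetilde{X})$ is therefore a single condition. By the equivalences recalled above (Remark~\ref{2}, openness at infinity, and Theorem~\ref{8}), it characterizes the non-amenability of both $\pi_1(X)$ and $\pi_1(Y)$, and Proposition~\ref{9} concludes.

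One could also argue directly with forms. If $\Omega$ is a CY-hyperbolic form on $X$, then $f^*\Omega$ is a Calabi--Yau form on $Y$, and its lift to $\widetilde{X}$ coincides with the lift of $\Omega$, which is already $\widetilde d$(bounded). This gives the implication from $X$ to $Y$ immediately; the reverse implication follows from the Cheeger-constant argument.
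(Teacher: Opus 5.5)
Your proposal is correct. Its first half is the paper's proof: the paper notes that $f_*$ embeds $\pi_1(Y)$ as a subgroup of index $\deg f$ in $\pi_1(X)$, states without justification that non-amenability then passes in both directions, and concludes with Proposition~\ref{9}. Your F\o lner-set and invariant-mean arguments supply that standard group-theoretic fact. There is one slip: for the decomposition $G=\bigsqcup_j Hg_j$ the extension must be $\hat\phi(hg_j):=\phi(h)$, not $\hat\phi(g_jh):=\phi(h)$. With left cosets the extension does not commute with left translation by $H$, as your own remark about right cosets indicates.

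The route you make your main argument is genuinely different. Since $f\circ\pi_Y=\pi_X$, the space $(\widetilde{Y},\pi_Y^*f^*\omega)$ is isometric to $(\widetilde{X},\widetilde{\omega})$, so both manifolds are tested on the same Riemannian manifold. The finite-index group theory disappears, and the statement is seen to concern the common universal cover rather than $\pi_1$ as such.

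Your forms argument in fact gives both directions without Brooks' theorem or Proposition~\ref{9}. Let $\Omega_Y$ be a CY-hyperbolic form on $Y$ with $\pi_Y^*\nu_{\Omega_Y}=d\zeta$. Since $h^{n,0}(Y)=1$, we have $f^*\Omega_X=c\,\Omega_Y$ for some $c\neq0$. Hence $\pi_X^*\nu_{\Omega_X}=\pi_Y^*\nu_{f^*\Omega_X}=d(|c|^2\zeta)$, and $\zeta$ is bounded for $\pi_Y^*f^*\omega=\widetilde{\omega}$ by the metric-independence remark.

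The paper's route is shorter once the group theory is granted, and it fits the paper's theme that CY-hyperbolicity is read off from $\pi_1(X)$. Yours is elementary and self-contained: it uses only the definition, uniqueness of the Calabi--Yau form up to scaling, and the equality of universal covers.
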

\begin{proof}
Since $f$ is a finite étale cover, the induced homomorphism $f_*:\pi_1(Y)\xhookrightarrow{\quad}\pi_1(X)$ is injective and identifies $\pi_1(Y)$ with a subgroup of index $[\pi_1(X):f_*(\pi_1(Y))]=\deg(f)$ in $\pi_1(X)$. Hence $\pi_1(Y)$ is non-amenable if and only if $\pi_1(X)$ is non-amenable. The result follows from Proposition~\ref{9}.
\end{proof}
\begin{rmk}
The notion of CY-hyperbolicity is invariant under finite étale covers and extends naturally to compact complex manifolds with torsion canonical bundle, i.e. some positive power of the canonical bundle is trivial. Since every such manifold admits a finite étale cover with trivial canonical bundle, this extension does not enlarge the class of CY-hyperbolic manifolds up to finite étale covers.
\end{rmk}
\hspace{2pt} We now investigate the behavior of CY-hyperbolicity under bimeromorphic transformations. Since a modification does not preserve the triviality of the canonical bundle in general, we restrict our attention to \textit{crepant} modifications. More precisely, a modification $\mu:Y\longrightarrow X$ is called \textbf{crepant} if $K_Y\simeq\mu^*K_X$. In particular, crepant modifications preserve the triviality of the canonical bundle. This leads to the following result:
\begin{thm}
Let $\mu:Y\longrightarrow X$ be a crepant modification between compact complex manifolds. If $X$ is CY-hyperbolic, then $Y$ is CY-hyperbolic. \label{12}
\end{thm}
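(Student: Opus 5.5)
The plan is to reduce the statement to the topological characterization in Proposition~\ref{9}, combined with the invariance of fundamental groups under modifications (Lemma~\ref{10}). This avoids manipulating bounded primitives directly.

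First, check that $Y$ is a compact Calabi--Yau manifold, so that Proposition~\ref{9} applies to it. Since $X$ is CY-hyperbolic, we have $K_X\simeq\mathcal{O}_X$. Crepancy gives $K_Y\simeq\mu^*K_X\simeq\mu^*\mathcal{O}_X\simeq\mathcal{O}_Y$. Pulling back a Calabi--Yau form $\Omega$ on $X$ does not give a nowhere-vanishing section directly, because $\mu^*\Omega$ vanishes along the exceptional locus where the Jacobian degenerates. The abstract isomorphism $K_Y\simeq\mathcal{O}_Y$ is nevertheless all Definition~\ref{7} requires. Compactness and connectedness of $Y$ are inherited, since $\mu$ is proper and birational onto the connected manifold $X$.

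Next, apply Lemma~\ref{10}: the induced map $\mu_*:\pi_1(Y)\longrightarrow\pi_1(X)$ is an isomorphism. By Proposition~\ref{9}, CY-hyperbolicity of $X$ means $\pi_1(X)$ is non-amenable. Amenability is an isomorphism invariant, so $\pi_1(Y)$ is non-amenable. Applying Proposition~\ref{9} again, now to $Y$, shows that $Y$ is CY-hyperbolic.

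The only step needing care is the applicability of Lemma~\ref{10}. Its proof uses an indeterminacy locus $E\subset X$ of complex codimension at least $2$, which holds because $X$ is smooth: the locus where $\mu^{-1}$ is not defined has codimension $\ge2$ (Zariski's main theorem / normality). The argument also uses that $\mu^{-1}(E)$ is a proper analytic subset of $Y$. Both hold here, so no new obstacle arises.

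Since the same reasoning runs in reverse, one also gets the converse: if $Y$ is CY-hyperbolic and $K_X$ is trivial, then $X$ is CY-hyperbolic. A direct analytic proof, pulling back the bounded primitive $\zeta$ via the lift $\widetilde\mu$, would fail. Boundedness does not survive pullback to $Y$, since $\mu^*\omega$ degenerates on the exceptional set, and $\mu^*\nu_\Omega$ is not a volume form there. This is why the topological route through Proposition~\ref{9} is the one to take.
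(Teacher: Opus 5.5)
Your proof is correct and is the paper's own argument: crepancy gives $K_Y\simeq\mathcal{O}_Y$, Lemma~\ref{10} gives $\pi_1(Y)\simeq\pi_1(X)$, and Proposition~\ref{9} then carries non-amenability from $\pi_1(X)$ to $\pi_1(Y)$. Your side remarks are inaccurate, though the proof does not rely on them: writing $K_Y\simeq\mu^*K_X\otimes\mathcal{O}_Y(R)$ with $R$ the effective Jacobian divisor, crepancy forces $R=0$ on the compact manifold $Y$, so $\mu^*\Omega$ vanishes nowhere and $\mu$ is in fact a biholomorphism; and $\widetilde\mu^*\zeta$ stays bounded for the lifted metric on $\widetilde Y$ because $d\widetilde\mu$ is uniformly bounded, so a direct analytic proof would not fail.
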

\begin{proof}
Since $\mu$ is crepant and $K_X\simeq\mathcal{O}_X$, we have $K_Y\simeq\mathcal{O}_Y$. Moreover, a modification induces an isomorphism of fundamental groups, so $\pi_1(Y)\simeq\pi_1(X)$ by Lemma~\ref{10}. Since $X$ is CY-hyperbolic, Proposition~\ref{9} implies that $\pi_1(X)$ is non-amenable. Hence $\pi_1(Y)$ is also non-amenable, and another application of Proposition~\ref{9} shows that $Y$ is CY-hyperbolic.
\end{proof}
\hspace{2pt} Deformation theory also offers a way to produce new families of CY-hyperbolic manifolds, as we shall demonstrate, as Proposition~\ref{9} implies that CY-hyperbolicity is constant throughout families of Calabi--Yau manifolds:
\begin{cor}
Let $\sigma:\mathcal{X}\longrightarrow B$ be a smooth proper holomorphic family of compact Calabi--Yau manifolds. Then either every fibre is CY-hyperbolic or none is.
\end{cor}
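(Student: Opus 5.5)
The plan is to reduce the statement to Proposition~\ref{9}: CY-hyperbolicity of a fibre $X_b:=\sigma^{-1}(b)$ is equivalent to non-amenability of $\pi_1(X_b)$. It therefore suffices to show that all fibres have isomorphic fundamental groups. Here I assume, as is implicit in the notion of a family of deformations, that the base $B$ is a connected complex manifold (or at least a connected complex space over which $\sigma$ is a proper submersion). Each fibre is a compact Calabi--Yau manifold by hypothesis, so Proposition~\ref{9} applies to every $X_b$.

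The key step is Ehresmann's fibration theorem. Since $\sigma$ is a proper holomorphic submersion, the underlying smooth map $\sigma:\mathcal{X}\longrightarrow B$ is a proper surjective submersion of smooth manifolds, hence a locally trivial $\mathscr{C}^\infty$ fibre bundle. Consequently, for any two points $b,b'\in B$ joined by a path $\gamma$, a trivialization of the pull-back bundle $\gamma^*\mathcal{X}\longrightarrow[0,1]$ gives a diffeomorphism $X_b\underset{\mathscr{C}^\infty}{\cong}X_{b'}$. Alternatively, one can argue that the set of $b'$ whose fibre is diffeomorphic to $X_b$ is open and closed in $B$ by local triviality. Connectedness of $B$ then gives $\pi_1(X_b)\simeq\pi_1(X_{b'})$ for all $b,b'\in B$.

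Amenability is an isomorphism invariant of groups, so either $\pi_1(X_b)$ is non-amenable for every $b$ or for none. Applying Proposition~\ref{9} fibrewise yields the dichotomy. I expect no serious obstacle. The only point needing care is the connectedness of $B$ (without it the statement is false, e.g.\ for a disjoint union of two unrelated families). I will state this as part of the standing meaning of a family, and I will also note that Ehresmann requires properness, which is given. If $B$ is a singular complex space, I would instead invoke the topological local triviality of proper smooth morphisms (a submersion in the analytic sense is locally a product over the base), which gives the same conclusion.
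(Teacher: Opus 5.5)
Your proposal is correct and follows the paper's own proof: Ehresmann's theorem makes the fibres diffeomorphic, so their fundamental groups are isomorphic, and Proposition~\ref{9} gives the conclusion. Your remark that $B$ must be connected for the dichotomy to hold is a worthwhile clarification, which the paper leaves implicit.
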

\begin{proof}
By Ehresmann's theorem~\cite{ehresmann1947espaces}, the fibres are diffeomorphic and hence have isomorphic fundamental groups. The result follows from Proposition~\ref{9}.
\end{proof}
\hspace{2pt} Several examples of non-Kähler Calabi--Yau $\partial\bar\partial$-manifolds have been constructed, notably in the setting of solvmanifolds; see, for instance, the works of A. Fino, A. Otal and L. Ugarte~\cite{fino2015six}, and A. Cattaneo and A. Tomassini~\cite{cattaneo2024lemma}. The latter examples show that one cannot expect the fundamental group of a compact Calabi--Yau $\partial\bar\partial$-manifold to be virtually abelian. Indeed, A. Cattaneo and A. Tomassini construct \textbf{generalized Nakamura manifolds} $N_{M,P,\tau}=G_M/\Gamma$, where $G_M=\C\ltimes_\rho\C^n$ is a solvable complex Lie group and $\Gamma=\Gamma'_\tau\ltimes_\rho\Gamma''_P$ is a cocompact lattice. For suitable choices of the parameters, these are non-Kähler Calabi--Yau $\partial\bar\partial$-manifolds~\cite[Theorem~1.1 and Section~\S4.6]{cattaneo2024lemma}. Their fundamental groups are isomorphic to $\Gamma$. To see how such a group can fail to be virtually abelian, one may take, for example,
\[
M=\begin{pmatrix}2&1\\1&1\end{pmatrix}\in\mathrm{SL}(2,\Z),
\]
whose eigenvalues are $(3+\sqrt5)/2$ and $(3-\sqrt5)/2$. In the construction, the action of a generator of the $\Z^2$-factor $\Gamma'_\tau$ on the lattice $\Gamma''_P\simeq\Z^4$ is induced by $M$ on the two copies of $\Z^2$. Since no positive power of $M$ is the identity, this action remains nontrivial for every positive power of that generator. Consequently, no finite-index subgroup of $\Gamma$ can be abelian: such a subgroup intersects $\Gamma''_P$ in a finite-index subgroup and contains a positive power of the acting generator, which does not commute with every element of that intersection. Thus, for admissible choices of $\tau$, the resulting Calabi--Yau $\partial\bar\partial$-manifold has a fundamental group that is not virtually abelian. Nevertheless, since $\Gamma$ is a lattice in a solvable Lie group, it is solvable and hence amenable. This example therefore rules out virtual abelianness as a general conclusion while remaining compatible with the weaker conjecture that $\pi_1(X)$ is amenable. Other examples of non-Kähler Calabi--Yau $\partial\bar\partial$-manifolds arise from constructions involving connected sums of copies of $\Sp^3\times\Sp^3$ and related manifolds~\cite{friedman2020lemma}; the examples under consideration here are simply connected and therefore also have amenable fundamental groups. For the general theory of the $\partial\bar\partial$-lemma and its relation to Bott--Chern cohomology, we refer~\cite{angella2013lemma}. A similar phenomenon occurs for compact complex-parallelizable manifolds, among which the $\partial\bar\partial$-property characterizes complex tori. Indeed, the $\partial\bar\partial$-property implies the degeneration of the Frölicher spectral sequence at the first page~\cite[{\S}~5, (5.21)]{deligne1975real}, while this degeneration characterizes complex tori among compact complex-parallelizable manifolds~\cite[Introduction]{kasuya2023frolicher}. In view of these observations, we are led to formulate the following variant of Theorem~\ref{thm:CY-amenability}, in which we replace the Fujiki class~$\mathcal{C}$ assumption with the $\partial\bar\partial$-property, while weakening the expected conclusion from virtual abelianity to amenability.
\begin{conj}
Let $X$ be a compact Calabi--Yau $\partial\bar\partial$-manifold. Then $\pi_1(X)$ is amenable. In particular, $X$ is not Calabi--Yau hyperbolic.
\end{conj}
\hspace{2pt} If this conjecture holds, Bogomolov--Tian--Todorov unobstructedness theorem for compact Calabi--Yau $\partial\bar\partial$-manifolds \cite[Theorem~1.2]{popovici2019holomorphic} does not apply to the CY-hyperbolic setting. Moreover, the triviality of the canonical bundle is not stable under small deformations of compact complex manifolds in general: I. Nakamura constructed a compact complex-parallelisable manifold admitting arbitrarily small deformations with Kodaira dimension $\kappa=-\infty$ \cite[\S3, Case~III-(3b)]{nakamura1975complex}. Therefore, the unobstructedness of deformations of CY-hyperbolic manifolds, as well as the openness of the Calabi--Yau property under small deformations, remain questions to be investigated.

\hspace{2pt} Nevertheless, we shall explore two distinct deformation-theoretic approaches to construct CY-hyperbolic manifolds.
\begin{enumerate}
    \item[\textbf{(\textit{a})}] \textbf{Deformations of the hyperkähler factor of a generalized Calabi--Gray manifold.} Let $M$ be a compact hyperkähler manifold and $X$ a generalized Calabi--Gray manifold associated with $M$ and a fixed pair $(Y,\varphi)$ as above. By Kodaira--Spencer deformation theory, the space of infinitesimal deformations of $M$ is given by $H^1(M,T_M)$, while obstructions to extending infinitesimal deformations may lie in $H^2(M,T_M)$~\cite{kodaira1958deformations}. By the Bogomolov--Tian--Todorov theorem~\cite{bogomolov1978hamiltonian,tian1987smoothness,todorov1989weil}, small deformations of $M$ are unobstructed. Moreover, every sufficiently small deformation of $M$ is hyperkähler~\cite[Proposition 9]{beauville1983varietes}. We can choose a small deformation family $\{M_t\}_{t\in\B_\varepsilon}$, where $\B_\varepsilon$ is an open ball of sufficiently small radius $\varepsilon>0$ centered at the origin in $\C^k$, with $k=h^1(M,T_M)$. Keeping $(Y,\varphi)$ fixed, we can construct the corresponding family of generalized Calabi--Gray manifolds $\{X_t\}_{t\in\B_\varepsilon}$, with $X_0=X$. The twistor spaces associated to $\{M_t\}_{t\in\B_\varepsilon}$ vary smoothly with the hyperkähler structures, and so do the induced complex structures on the family $\{X_t\}_{t\in\B_\varepsilon}$. In particular, each $X_t$ has trivial canonical bundle and satisfies $\pi_1(X_t)\simeq\pi_1(M)\times\pi_1(Y)$. It follows from Proposition~\ref{9} that every $X_t$ is CY-hyperbolic.
    \item[\textbf{(\textit{b})}] \textbf{Ghys's deformations of complex structures on compact quotients of $\mathrm{SL}_2(\C)$.} Although complex-parallelizable manifolds have trivial holomorphic tangent bundles, they need not be locally rigid. In particular, \textit{Raghunathan's rigidity theorem} asserts that if $G$ is a connected complex semisimple Lie group with no simple factor locally isomorphic to $\mathrm{SL}_2(\C)$ (for example $G=\mathrm{SL}_n(\C)$ with $n\geq 3$), and $\Gamma\subset G$ is a torsion-free discrete cocompact subgroup, then the canonical complex structure on the compact quotient $G/\Gamma$ is locally rigid: every sufficiently small deformation is biholomorphic to the original complex structure~\cite[Theorem~3 and its Corollary]{raghunathan1966vanishing}.
    
    \hspace{2pt} Let $G$ be a connected, simply connected complex semisimple Lie group, let $\Gamma\subset G$ be a torsion-free cocompact lattice, and set $X=G/\Gamma$. Since $X$ is complex-parallelizable, its holomorphic tangent bundle is trivial, and hence $H^1(X,T_X)\simeq H^1(X,\mathcal{O}_X)\otimes\mathfrak g$, where $\mathfrak g$ denotes the Lie algebra of $G$. Moreover, for such quotients, $H^1(X,\mathcal{O}_X)\simeq H^1(\Gamma,\C)$. Thus, if $b_1(\Gamma)=0$, then $H^1(X,T_X)=0$, so $X$ is infinitesimally rigid.
    
    \hspace{2pt} The case $G=\mathrm{SL}_2(\C)$ is particularly interesting, as it lies outside this rigidity regime and allows us to construct new CY-hyperbolic threefolds by deforming the complex structures of its compact quotients by suitable torsion-free cocompact lattices, as we shall see below.
    
    \hspace{2pt} Let $\Gamma\subset\mathrm{SL}_2(\C)$ be a torsion-free cocompact lattice and put $X_0=\mathrm{SL}_2(\C)/\Gamma$. It was proved by C. S. Rajan that the canonical complex structure on $X_0$ admits nontrivial deformations if and only if $b_1(\Gamma)>0$~\cite{Rajan1994deformations}. 
    
    \hspace{2pt} This criterion is closely connected to the topology of hyperbolic $3$-manifolds. Indeed, $\mathrm{PSL}_2(\C)$ is the identity component of the orientation-preserving isometry group of hyperbolic $3$-space, and $\Gamma$ determines a closed hyperbolic $3$-manifold $\Hyp^3/\overline{\Gamma}$, where $\overline{\Gamma}$ denotes its image in $\mathrm{PSL}_2(\C)$. Although $b_1(\Gamma)$ need not be positive, Thurston's virtual fibering conjecture (proven by I. Agol) implies that every closed hyperbolic $3$-manifold has a finite-sheeted cover with positive first Betti number~\cite{agol2013virtual}. Consequently, there exists a finite-index subgroup $\Gamma'\subset\Gamma$ with $b_1(\Gamma')>0$. Applying Rajan's theorem to $\Gamma'$ shows that the corresponding finite cover $X'_0=\mathrm{SL}_2(\C)/\Gamma'$ admits nontrivial deformations, even when $X_0$ itself is locally rigid.
    
    \hspace{2pt} Ghys's work provides a geometric description of these deformations. Let $G=\mathrm{SL}_2(\C)$ and consider the representation variety $\mathcal{R}(\Gamma)=\operatorname{Hom}(\Gamma,G)$, with $\rho_0$ denoting the trivial representation. For a representation $\rho$ sufficiently close to $\rho_0$, define an action of $\Gamma$ on $G$ by $\gamma*_\rho u=\rho(\gamma)^{-1}u\gamma$, where $\gamma\in\Gamma$ and $u\in G$. For $\rho$ sufficiently close to $\rho_0$, this action is free and properly discontinuous, and its quotient $X_\rho=G/\Gamma_\rho$ is a compact complex manifold diffeomorphic to $X_0$. When $\rho=\rho_0$, this recovers $X_0$, and for $\rho,\rho'\in\mathcal{R}(\Gamma)$ close to $\rho_0$, $X_{\rho}$ and $X_{\rho'}$ are biholomorphic if and only if $\rho=u\rho'u^{-1}$ for some $u\in G$. Moreover, $X_\rho$ is not complex-parallelizable for $\rho\neq\rho_0$.
    
    \hspace{2pt} E. Ghys relates the local deformation space of $X_0$ to the deformation space of holomorphic $(G\times G,G)$-structures: the latter is described locally by representations into $G\times G$, and in this construction the holonomy representations are of the form $\gamma\longmapsto(\rho(\gamma),\gamma)$. After accounting for the natural equivalences of representations, this gives a character-variety description of the local deformation space in terms of the germ of $\operatorname{Hom}(\Gamma,G)\big/\big/G$ at the class of the trivial representation~\cite{Ghys1995deformations}. T. Jamin gives a more recent treatment of the deformation theory of compact quotients of $\mathrm{SL}_2(\C)$ in the framework of the Teichmüller stack~\cite{Jamin2024teichmuller}. We now show that the manifolds obtained through Ghys's construction remain CY-hyperbolic.
    
    \begin{thm}
    $X_\rho$ is CY-hyperbolic for a representation $\rho$ sufficiently close to the trivial representation $\rho_0$. \label{13}
    \end{thm}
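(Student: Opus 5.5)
The plan is to reduce the statement to Proposition~\ref{9}. That proposition says CY-hyperbolicity of a compact Calabi--Yau manifold depends only on whether its fundamental group is amenable. So two facts about $X_\rho$ are needed: that $K_{X_\rho}$ is trivial, and that $\pi_1(X_\rho)$ is non-amenable.

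\textbf{Fundamental group.} By Ghys's construction recalled above, for $\rho$ close to $\rho_0$ the twisted action $\gamma*_\rho u=\rho(\gamma)^{-1}u\gamma$ of $\Gamma$ on $G=\mathrm{SL}_2(\C)$ is free and properly discontinuous. Since $G$ is simply connected, $G\longrightarrow X_\rho$ is the universal cover, and $\pi_1(X_\rho)\simeq\Gamma$. Alternatively, $X_\rho$ is diffeomorphic to $X_0$, which gives the same conclusion. Non-amenability of $\Gamma$ was established in the examples above, via Borel density and the Tits alternative.

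\textbf{Canonical bundle.} This is the step needing an actual argument, since $X_\rho$ is no longer complex-parallelizable and the left-invariant volume form of $X_0$ cannot simply be reused. I would take $\Omega_G$ to be a bi-invariant holomorphic volume form on $G$. Such a form exists because $\mathrm{SL}_2(\C)$ is unimodular: $\det\operatorname{Ad}=1$, as $G$ is semisimple.

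For each $\gamma$, the map $u\longmapsto\rho(\gamma)^{-1}u\gamma$ is a composition of a left translation and a right translation, both holomorphic. Hence it preserves $\Omega_G$, and $\Omega_G$ descends to a nowhere-vanishing holomorphic $(3,0)$-form on $X_\rho$. This gives $K_{X_\rho}\simeq\mathcal{O}_{X_\rho}$. The same argument also shows that $X_\rho$ is compact and complex, so it is a compact Calabi--Yau manifold in the sense of Definition~\ref{7}.

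\textbf{Conclusion and main obstacle.} Proposition~\ref{9} now gives that $X_\rho$ is CY-hyperbolic. The only delicate point is the existence of the bi-invariant form, that is, the unimodularity of $G$; the rest is bookkeeping from Ghys's construction. As a fallback that avoids unimodularity, one could argue as follows. The family $\{X_\rho\}$ is a smooth holomorphic family containing $X_0$, whose Calabi--Yau property is known. Combined with the corollary on families, this would suffice if triviality of $K$ were known to be open in this family. However, the paper notes that openness can fail in general, so the direct descent argument is the one I would commit to.
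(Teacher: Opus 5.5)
Your proposal is correct and matches the paper's proof: a bi-invariant holomorphic volume form on $G$ is preserved by $u\longmapsto\rho(\gamma)^{-1}u\gamma$ and descends to $X_\rho$, so $K_{X_\rho}$ is trivial; $G\longrightarrow X_\rho$ is the universal cover, so $\pi_1(X_\rho)\simeq\Gamma$ is non-amenable; and Proposition~\ref{9} concludes (your unimodularity justification just makes explicit what the paper asserts). One small correction: compactness of $X_\rho$ does not come from the descent argument but from Ghys's construction itself, since $X_\rho$ is diffeomorphic to $X_0$.
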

    \begin{proof}
    The complex Lie group $G$ admits a nowhere-vanishing holomorphic $3$-form $\Omega$ invariant under both left and right translations. Consequently, $\Omega$ is invariant under the deformed action $u\longmapsto\rho(\gamma)^{-1}u\gamma$:
    \[
    \gamma^*\Omega=(L_{\rho(\gamma)^{-1}}\circ R_\gamma)^*\Omega=\Omega\hspace{2pt},\qquad\text{for all }\gamma\in\Gamma_\rho
    \]
    Hence $\Omega$ descends to a nowhere-vanishing holomorphic $3$-form on $X_\rho$, giving $K_{X_\rho}\simeq\mathcal{O}_{X_\rho}$. Furthermore, $G$ is simply connected, and the deformed action is free and properly discontinuous. Therefore, $G\longrightarrow X_\rho$ is the universal covering map, and $\pi_1(X_\rho)\simeq\Gamma$ is non-amenable. The conclusion now follows from Proposition~\ref{9}.
    \end{proof}
\end{enumerate}
\hspace{2pt} We now describe a construction of new CY-hyperbolic manifolds from \textit{holomorphic principal elliptic bundles} over compact CY-hyperbolic manifolds. Recall that a \textbf{holomorphic principal elliptic bundle} is a holomorphic map $p:X\longrightarrow Y$ equipped with a holomorphic action of an elliptic curve $E$ that preserves the fibres and acts freely and transitively on each fibre. In particular, $p$ is locally holomorphically isomorphic to the projection $U\times E\to U$.
\begin{thm}
Let $Y$ be a compact CY-hyperbolic manifold, and let $p:X\longrightarrow Y$ be a holomorphic principal elliptic bundle. Then $X$ is CY-hyperbolic. \label{18}
\end{thm}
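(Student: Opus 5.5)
The plan is to reduce the statement to Proposition~\ref{9}, which requires two things: that $K_X\simeq\mathcal{O}_X$, and that $\pi_1(X)$ is non-amenable. Since $Y$ is CY-hyperbolic, Proposition~\ref{9} already gives that $K_Y$ is trivial and that $\pi_1(Y)$ is non-amenable.

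\emph{Step 1: triviality of the canonical bundle.} The $E$-action on $X$ is holomorphic and free, so it gives a nowhere-vanishing holomorphic vector field $V$ on $X$ generating the fibre directions. This yields an exact sequence of holomorphic vector bundles
\[
0\longrightarrow \mathcal{O}_X\cdot V\longrightarrow T_X\longrightarrow p^*T_Y\longrightarrow 0 .
\]
The relative tangent bundle is trivialized by $V$. Taking determinants gives $K_X\simeq p^*K_Y\otimes\mathcal{O}_X\simeq\mathcal{O}_X$.

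An explicit Calabi--Yau form can also be written down. Take $\Omega_Y$ on $Y$ and a holomorphic $1$-form $\theta$ along the fibres. Locally $\theta=dw$ in a trivialization $U\times E$. The transition functions are translations $w\mapsto w+c(y)$ with $c$ holomorphic, so $dw$ changes only by $dc$, a term pulled back from the base. Hence $p^*\Omega_Y\wedge dw$ is globally well defined and nowhere vanishing. I would present the determinant argument and mention this form as a check.

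\emph{Step 2: non-amenability of $\pi_1(X)$.} The long exact homotopy sequence of the fibration $E\to X\to Y$ gives
\[
\pi_1(E)\simeq\Z^2\longrightarrow\pi_1(X)\overset{p_*}{\longrightarrow}\pi_1(Y)\longrightarrow\pi_0(E)=0 .
\]
So $p_*$ is a surjection $\pi_1(X)\twoheadrightarrow\pi_1(Y)$. Amenability passes to quotients: if $\pi_1(X)$ were amenable, any invariant mean on it would push forward to an invariant mean on $\pi_1(Y)$. Since $\pi_1(Y)$ is non-amenable, $\pi_1(X)$ is non-amenable. Both groups are finitely generated because the manifolds are compact, so the definition recalled before Remark~\ref{3} applies. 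Proposition~\ref{9} then shows that $X$ is CY-hyperbolic.

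\emph{Main obstacle.} The only point needing care is Step 1. For a non-trivial principal bundle the fibre coordinate $w$ is not a global function, and one has to check that the vertical vector field, equivalently the relative canonical bundle, is globally trivial. This holds because $E$ is abelian, so the fundamental vector field of the action is invariant and defined globally. Step 2 is formal and uses only the surjectivity of $p_*$.
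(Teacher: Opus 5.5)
Your proposal is correct and follows essentially the same route as the paper. The paper also trivializes the vertical tangent bundle using the $E$-action and takes determinants in the relative tangent sequence to get $K_X\simeq p^*K_Y\simeq\mathcal{O}_X$, then uses surjectivity of $p_*$ from the homotopy exact sequence, the fact that quotients of amenable groups are amenable, and Proposition~\ref{9}. Your explicit check that $p^*\Omega_Y\wedge dw$ is globally defined is a valid addition that the paper does not include.
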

\begin{proof}
Since $p:X\longrightarrow Y$ is a holomorphic principal elliptic bundle, its fibres are complex tori of dimension one, and the vertical tangent bundle $T_{X/Y}:=\operatorname{Ker}(dp)$ is holomorphically trivial. Indeed, the Lie algebra of the elliptic curve induces a nowhere-vanishing holomorphic section of $T_{X/Y}$. The relative tangent sequence:
\[
0\longrightarrow T_{X/Y}\longrightarrow T_X
\overset{dp}{\longrightarrow}p^*T_Y\longrightarrow 0
\]
therefore yields, upon taking determinants, $K_X\simeq p^*K_Y\simeq\mathcal{O}_X$. Moreover, the homotopy exact sequence of the fibration $p:X\longrightarrow Y$ contains:
\[
\pi_1(X)\overset{p_*}{\longrightarrow}\pi_1(Y)
\longrightarrow \pi_0(E),
\]
where $E$ is the elliptic fibre. Hence $p_*:\pi_1(X)\longrightarrow\pi_1(Y)$ is surjective. Since a quotient of an amenable group is amenable and $\pi_1(Y)$ is non-amenable, it follows that $\pi_1(X)$ is non-amenable. The conclusion now follows from Proposition~\ref{9}.
\end{proof}
\hspace{2pt} We now show that the preceding proposition applies to holomorphically nontrivial principal elliptic bundles by establishing a sufficient condition for their existence.
\begin{prop}
Let $Y$ be a compact complex manifold such that $r=h^{0,1}(Y)>0$, and let $E=\C/\Lambda$ be an elliptic curve. Then there exists a holomorphically nontrivial principal $E$-bundle over $Y$ whose underlying topological bundle is trivial. \label{19}
\end{prop}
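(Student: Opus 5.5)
The plan is to classify principal $E$-bundles over $Y$ by sheaf cohomology and pick a class that is nonzero but topologically trivial. Write $\mathcal{O}_Y(E)$ for the sheaf of germs of holomorphic maps $Y\to E$. Holomorphic principal $E$-bundles over $Y$, up to isomorphism, are classified by $H^1(Y,\mathcal{O}_Y(E))$. The underlying topological (equivalently, $C^\infty$) bundles are classified by $H^1(Y,\mathcal{C}_Y(E))$, which equals $H^2(Y,\Lambda)$ because the sheaf $\mathcal{C}^\infty(\C)$ is fine. It therefore suffices to exhibit a nonzero class in $H^1(Y,\mathcal{O}_Y(E))$ that maps to zero in $H^2(Y,\Lambda)$.

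\textbf{Step 1: the long exact sequence.} Start from the exponential-type short exact sequence of sheaves
\[
0\longrightarrow\Lambda\longrightarrow\mathcal{O}_Y\longrightarrow\mathcal{O}_Y(E)\longrightarrow 0,
\]
where $\Lambda$ is the constant sheaf. Its long exact sequence contains
\[
H^1(Y,\Lambda)\overset{\iota}{\longrightarrow}H^1(Y,\mathcal{O}_Y)\overset{q}{\longrightarrow}H^1(Y,\mathcal{O}_Y(E))\overset{\delta}{\longrightarrow}H^2(Y,\Lambda).
\]
The map $\delta$ is the topological characteristic class: comparing with the analogous sequence for smooth functions, where $H^1(Y,\mathcal{C}^\infty)=0$, identifies $\delta$ with the map to $H^1(Y,\mathcal{C}_Y(E))\cong H^2(Y,\Lambda)$. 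Consequently every class in the image of $q$ is a holomorphic principal $E$-bundle whose underlying topological bundle is trivial. The problem thus reduces to showing that $q$ is nonzero, i.e.\ that $\iota$ is not surjective.

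\textbf{Step 2: a dimension count.} Since $\Lambda\cong\Z^2$, we have $H^1(Y,\Lambda)\cong H^1(Y,\Z)^{2}$. This is a finitely generated abelian group, hence countable, so its image under $\iota$ is a countable subgroup of $H^1(Y,\mathcal{O}_Y)\cong\C^r$. Because $r>0$, the space $\C^r$ is uncountable, so $\iota$ cannot be surjective. In fact $\operatorname{coker}\iota$ is uncountable, so there are uncountably many non-isomorphic such bundles. Any element outside the image of $\iota$ yields a nonzero class $q(\xi)$, which is a holomorphically nontrivial bundle that is topologically trivial by Step 1.

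\textbf{Main obstacle.} The delicate point is Step 1, namely justifying the identification of $\delta$ with the topological obstruction. I would handle it by mapping the holomorphic sequence into the smooth sequence
\[
0\to\Lambda\to\mathcal{C}^\infty_Y\to\mathcal{C}_Y(E)\to0
\]
and using naturality together with the vanishing $H^1(Y,\mathcal{C}^\infty_Y)=0$.

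The argument above uses only the trivial direction of this identification. If $\delta(x)=0$, then $x$ is topologically trivial, because $x$ comes from $H^1(\mathcal{O}_Y)$, which maps to $H^1(\mathcal{C}^\infty_Y)=0$, so the smooth transition functions can be untwisted. This requires no Hodge theory and no Kähler hypothesis, which is essential since $Y$ may be non-Kähler.
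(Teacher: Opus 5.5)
Your proposal is correct and follows essentially the same route as the paper. Both arguments use the sequence $0\to\Lambda\to\mathcal{O}_Y\to\mathcal{O}_Y(E)\to 0$, the countability of $H^1(Y,\Lambda)$ against the uncountability of $H^1(Y,\mathcal{O}_Y)\cong\C^r$, and the topological triviality of classes coming from $H^1(Y,\mathcal{O}_Y)$. Your comparison with the smooth sequence, using $H^1(Y,\mathcal{C}^\infty_Y)=0$, justifies that last step more explicitly than the paper's one-line assertion.
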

\begin{proof}
Let $\mathcal{E}$ denote the sheaf of holomorphic maps to $E$. The exact sequence of sheaves
\[
0\longrightarrow\underline{\Lambda}\longrightarrow\mathcal{O}_Y\longrightarrow\mathcal{E}\longrightarrow0
\]
induces an injection
\[
Q_E(Y):=
\frac{H^1(Y,\mathcal{O}_Y)}{\operatorname{Im}\bigl(H^1(Y,\underline{\Lambda})\longrightarrow H^1(Y,\mathcal{O}_Y)\bigr)}\hookrightarrow H^1(Y,\mathcal{E}).
\]
The group $H^1(Y,\underline{\Lambda})$ is countable, since $Y$ is compact and $H^1(Y,\Z)$ is finitely generated, whereas $H^1(Y,\mathcal{O}_Y)\simeq\C^r$ is uncountable by hypothesis. It follows that $Q_E(Y)\neq0$. Any nonzero class in $Q_E(Y)$ therefore determines a nontrivial class in $H^1(Y,\mathcal{E})$, and hence a holomorphically nontrivial principal $E$-bundle over $Y$. Moreover, the image of $Q_E(Y)$ in $H^1(Y,\mathcal{E})$ lies in the kernel of the characteristic-class map
\[
H^1(Y,\mathcal{E})\longrightarrow H^2(Y,\underline{\Lambda}).
\]
Thus the corresponding principal $E$-bundle is topologically trivial.
\end{proof}
\begin{enumerate}
    \item[\textbf{(\textit{a})}] \textbf{Application to compact quotients of $\mathrm{SL}_2(\C)$.} Let $G=\mathrm{SL}_2(\C)$, and let $\Gamma\subset G$ be a torsion-free cocompact lattice, and set $Y:=G/\Gamma$. We may choose $\Gamma$ so that $b_1(Y)=b_1(\Gamma)=r>0$. The existence of such lattices is recalled in \cite[Remark~7(ii)]{agol2013virtual,biswas2022picard}. By \cite[Theorem~1(ii) and the proof thereof]{biswas2022picard}, we have $H^1(Y,\mathcal{O}_Y)\simeq\C^r$, so $h^{0,1}(Y)=r>0$. Since $Y$ is Calabi--Yau hyperbolic, Proposition~\ref{19} applies and yields a holomorphically nontrivial, topologically trivial principal elliptic bundle over $Y$ whose total space is Calabi--Yau hyperbolic by Theorem~\ref{18}.
    \item[\textbf{(\textit{b})}] \textbf{Application to generalized Calabi--Gray manifolds.} Let $M$ be a compact \textbf{simply connected} hyperkähler manifold with $\dim_\C M=2m$, and let $Y$ be a compact hyperelliptic Riemann surface of genus $g=2m+1\geq 3$. Denote by $\varphi:Y\longrightarrow\CP^1$ its hyperelliptic map, which has degree $2$, and by $\pi:\mathcal{Z}_M\longrightarrow\CP^1$ the twistor projection of $M$. Consider the fibered product $X:=Y\times_{\CP^1}\mathcal{Z}_M$, and let $p:X\longrightarrow Y$ be the natural projection. By the canonical bundle formula for generalized Calabi--Gray manifolds, we have
    \[
    K_X\simeq p^*\bigl(K_Y\otimes\varphi^*\mathcal{O}_{\CP^1}(-2m)\bigr).
    \]
    Since $\varphi$ is the hyperelliptic map of a curve of genus $g=2m+1$, we have
    \[
    K_Y\simeq\varphi^*\mathcal{O}_{\CP^1}(g-1)=\varphi^*\mathcal{O}_{\CP^1}(2m).
    \]
    It follows that $K_X\simeq\mathcal{O}_X$. Moreover, these generalized Calabi--Gray manifolds are Calabi--Yau hyperbolic, as established above. We next compute $h^{0,1}(X)$. The fibres of $p$ are copies of $M$. Since $M$ is compact and simply connected, $H^1(M,\mathcal{O}_M)=0$, and connectedness of the fibres gives $p_*\mathcal{O}_X\simeq\mathcal{O}_Y$. By the standard base-change theorem, we also have $R^1p_*\mathcal{O}_X=0$. The Leray spectral sequence
    \[
    E_2^{a,b}=H^a\bigl(Y,R^bp_*\mathcal{O}_X\bigr)\Longrightarrow H^{a+b}(X,\mathcal{O}_X)
    \]
    therefore has, in total degree $1$, only one potentially nonzero term, namely
    \[
    E_2^{1,0}=H^1(Y,\mathcal{O}_Y).
    \]
    Consequently,
    \[
    H^1(X,\mathcal{O}_X)\simeq H^1(Y,\mathcal{O}_Y),\qquad h^{0,1}(X)=g=2m+1>0.
    \]
    Thus Proposition~\ref{19} applies to $X$ and yields a holomorphically nontrivial, topologically trivial principal elliptic bundle over $X$. By Theorem~\ref{18}, its total space is again Calabi--Yau hyperbolic.
\end{enumerate}
\begin{rmk}
\hspace{2pt} The constructions developed above provide several ways of producing new CY-hyperbolic manifolds. However, they do not yet provide an example that is also Kobayashi hyperbolic, and each construction raises a different difficulty.

\hspace{2pt} First, the product construction does not yield Kobayashi hyperbolic manifolds when the initial factor is Kobayashi non-hyperbolic. Indeed, if $X$ is CY-hyperbolic but Kobayashi non-hyperbolic and $Y$ is any compact Calabi--Yau manifold, then $X\times Y$ is CY-hyperbolic by Proposition~\ref{11}. Since every complex submanifold of a Kobayashi hyperbolic manifold is Kobayashi hyperbolic, the product $X\times Y$ cannot be Kobayashi hyperbolic.

\hspace{2pt} Second, let $f:X\to Y$ be a finite étale cover. By \cite[Theorem~3.2.8]{kobayashi1998hyperbolic}, $X$ is Kobayashi hyperbolic if and only if $Y$ is Kobayashi hyperbolic. Consequently, none of the CY-hyperbolic examples constructed above, which are not Kobayashi hyperbolic, can yield a Kobayashi hyperbolic manifold by passing to a finite étale cover.

\hspace{2pt} Third, crepant modifications preserve CY-hyperbolicity by Theorem~\ref{12}. However, they do not turn a Kobayashi non-hyperbolic manifold into a Kobayashi hyperbolic one. More precisely, we have:
\begin{prop}
Let $\mu:Y\longrightarrow X$ be a modification between compact complex manifolds. If $Y$ is Kobayashi hyperbolic, then $\mu$ is a biholomorphism. In particular, $X$ is also Kobayashi hyperbolic.
\end{prop}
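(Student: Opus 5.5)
The plan is to show that a modification $\mu:Y\to X$ with non-empty exceptional locus forces rational curves into $Y$, so that Brody's criterion (Theorem~\ref{4}) gives the result.

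Let $E\subset X$ be the minimal analytic subset outside of which $\mu$ is biholomorphic, and put $E'=\mu^{-1}(E)$. If $E=\emptyset$, then $\mu$ is a proper bijective holomorphic map between manifolds that is locally biholomorphic, hence a biholomorphism. We therefore assume $E\neq\emptyset$ and aim for a contradiction with the Kobayashi hyperbolicity of $Y$.

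\textbf{Step 1: the fibres over $E$ are positive-dimensional.} I would first show that $\mu$ has positive-dimensional fibres over $E$. Suppose some fibre over a point of $E$ were finite. Since $X$ is smooth, hence normal, and $\mu$ is proper, bimeromorphic and has connected fibres, Zariski's main theorem would make $\mu$ a local biholomorphism near that point. This contradicts the minimality of $E$. Hence there is $x\in E$ with $\dim\mu^{-1}(x)\geq 1$.

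\textbf{Step 2: producing rational curves (the main obstacle).} The key claim is that the fibres of a modification of a smooth (or merely normal) space are covered by rational curves. In the projective or algebraic setting this is Abhyankar's lemma: exceptional loci of birational morphisms to smooth varieties are ruled. For compact complex manifolds, I would first reduce via the bimeromorphic-geometry local statement: locally over $X$, $\mu$ is dominated by a sequence of blow-ups with smooth centres (Hironaka's Chow lemma / flattening). Alternatively, one can argue directly with the canonical bundle. Write $K_Y=\mu^*K_X\otimes\mathcal{O}_Y(R)$, where the ramification divisor $R$ is effective with support exactly the exceptional divisor, which is non-empty by purity of the exceptional locus for a target that is smooth. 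Then $-K_Y$ is $\mu$-relatively negative on some curve in the exceptional fibre, so one can invoke the relative version of Mori's bend-and-break, or Kollár's relative rational-curve theorem. These produce a rational curve $\CP^1\to\mu^{-1}(x)\subset Y$. This is the delicate step. In the non-algebraic setting I would rely on the local structure theorem, which says that a modification is locally projective over $X$ (the graph can be chosen relatively projective via Hironaka's Chow lemma), and then apply Abhyankar's lemma fibrewise.

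\textbf{Step 3: conclusion.} A non-constant holomorphic map $\CP^1\to Y$ composed with $\C\to\CP^1$ gives a nonconstant entire curve in $Y$. By Brody's Theorem~\ref{4}, this contradicts the Kobayashi hyperbolicity of $Y$. Hence $E=\emptyset$ and $\mu$ is a biholomorphism. The final assertion follows because Kobayashi hyperbolicity is invariant under biholomorphisms.
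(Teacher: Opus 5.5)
Your overall strategy is the paper's: a non-biholomorphic modification onto a smooth $X$ yields a rational curve in $Y$ contracted by $\mu$, and this contradicts hyperbolicity. The paper does not prove the existence of that curve; it quotes it from \cite[Lemma~25]{biswas2016holomorphic}. Your Steps~1 and~3 are fine. Step~2, the only substantive step, is not established by the arguments you give for the analytic setting.

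\textbf{Where Step~2 fails.} Hironaka's Chow lemma does not say that $\mu$ itself is (locally) projective over $X$. It says that $\mu$ is dominated by a projective modification, i.e.\ $\tau=\mu\circ\sigma$ with $\tau:Z\to X$ a finite sequence of blow-ups with smooth centres. Abhyankar's lemma is a statement about birational morphisms of algebraic varieties and cannot be applied ``fibrewise'' over an analytic germ. The canonical-bundle route has two problems. First, it needs the negativity lemma, which uses relative projectivity, to find a contracted curve $C$ with $K_Y\cdot C=R\cdot C<0$; note that it is $K_Y$, not $-K_Y$, that must be negative on $C$. Second, it then needs bend-and-break, which is intrinsically algebraic (reduction modulo $p$) and unavailable on an arbitrary, possibly non-Kähler, compact complex manifold.

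\textbf{How to close the gap.} The reduction you mention first works once it is carried out. With $\tau=\mu\circ\sigma$ as above, $\sigma$ is proper and bimeromorphic, hence surjective, so $\sigma(\tau^{-1}(x))=\mu^{-1}(x)$. This fibre is positive-dimensional by Step~1. By induction along the tower of blow-ups, $\tau^{-1}(x)$ is connected and any two of its points are joined by a chain of rational curves contained in it, for three reasons. The positive-dimensional fibres of each blow-up are projective spaces. The strict transform of a rational curve not contained in the centre is rational. A rational curve $C$ lying in the centre $W$ is dominated by a rational curve in its preimage, namely a section of the projectivised normal bundle of $W$ pulled back to the normalisation $\CP^1$ of $C$. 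If $\sigma$ contracted every such rational curve, $\sigma(\tau^{-1}(x))$ would be a single point. Hence some rational curve in $\tau^{-1}(x)$ is mapped non-constantly by $\sigma$, and its image is a rational curve in $\mu^{-1}(x)\subset Y$. With this argument, or simply the citation the paper uses, your Step~3 completes the proof.
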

\begin{proof}
Suppose, by contradiction, that $\mu$ is not a biholomorphism. Since $\mu$ is a nontrivial modification and $X$ is smooth, its exceptional locus contains a rational curve $C\subset Y$ contracted to a point by $\mu$ by~\cite[Lemma~25]{biswas2016holomorphic}, i.e. there exists a nonconstant holomorphic map $\phi:\CP^1\longrightarrow Y$ whose image is $C$. This contradicts the Kobayashi hyperbolicity of $Y$. Therefore, $\mu$ must be a biholomorphism. It follows that $X$ is Kobayashi hyperbolic.
\end{proof}
\hspace{2pt} Fourth, the deformation constructions discussed above provide further examples of CY-hyperbolic manifolds that are not Kobayashi hyperbolic. On the one hand, small deformations of generalized Calabi--Gray manifolds are again generalized Calabi--Gray manifolds and hence remain Kobayashi non-hyperbolic by Theorem~\ref{15}. On the other hand, the manifolds $X_\rho$ arising from Ghys's construction are compact quotients of $\mathrm{SL}_2(\C)$ and are therefore also Kobayashi non-hyperbolic. Thus, in both constructions, CY-hyperbolicity and Kobayashi non-hyperbolicity persist simultaneously under the deformations considered.

\hspace{2pt} Finally, let $p:X\longrightarrow Y$ be a holomorphic principal elliptic bundle. Regardless of whether the base $Y$ is Kobayashi hyperbolic, the total space $X$ is never Kobayashi hyperbolic, since its fibers are elliptic curves and hence are not Kobayashi hyperbolic. Indeed, Kobayashi hyperbolicity is inherited by complex submanifolds, so the presence of a Kobayashi non-hyperbolic fiber precludes the Kobayashi hyperbolicity of $X$.
\end{rmk}

\hspace{2pt} Although all the examples constructed in this paper fail to be Kobayashi hyperbolic, it remains unclear whether this is an intrinsic obstruction or merely a limitation of the constructions considered here. In view of the difficulties encountered above, one might expect that finding such an example, if one exists, would require a substantially different approach. One possible direction is to investigate more systematically the non-amenable groups that can arise as fundamental groups of compact Calabi--Yau manifolds. Understanding which non-amenable groups can be realized in this setting may lead to new classes of Calabi--Yau hyperbolic manifolds and help clarify whether any of them can also be Kobayashi hyperbolic. This motivates the following question:
\begin{qst}
Which finitely presented non-amenable groups occur as the fundamental groups of compact complex manifolds with trivial canonical bundle? \label{17}
\end{qst}
\hspace{2pt} A more specific direction is to restrict attention to Kähler groups. Since the examples above show that Calabi--Yau hyperbolic manifolds can have Kähler fundamental groups, it is natural to ask which non-amenable Kähler groups can be realized in this way. We therefore pose the following refinement of Question~\ref{17}:
\begin{qst}
Which non-amenable Kähler groups can be realized as the fundamental groups of compact complex manifolds with trivial canonical bundle?
\end{qst}

\bibliographystyle{alpha}
\bibliography{CYH}

@article{brody1978compact,
  title={Compact {M}anifolds and {H}yperbolicity},
  author={Brody, Robert},
  journal={Transactions of the American Mathematical Society},
  volume={235},
  pages={213--219},
  year={1978}
}

@article{lang1986hyperbolic,
  title={Hyperbolic and {D}iophantine analysis},
  author={Lang, Serge},
  journal={Bulletin of the American Mathematical Society},
  volume={14},
  number={2},
  pages={159--205},
  year={1986}
}

@article{kobayashi1967intrinsic,
  title={Intrinsic metrics on complex manifolds},
  author={Kobayashi, Shoshichi},
  journal={Bulletin of the American Mathematical Society},
  volume={73},
  number={3},
  pages={347--349},
  year={1967}
}

@book{kobayashi1998hyperbolic,
  author    = {Kobayashi, Shoshichi},
  title     = {Hyperbolic Complex Spaces},
  publisher = {Springer},
  address   = {Berlin},
  year      = {1998},
  series    = {Grundlehren der mathematischen Wissenschaften},
  volume    = {318},
  doi       = {10.1007/978-3-662-03582-5}
}

@article{gromov1991kahler,
  title={K{\"a}hler {H}yperbolicity and ${L}^2$-{H}odge theory},
  author={Gromov, Mikhail},
  journal={Journal of differential geometry},
  volume={33},
  number={1},
  pages={263--292},
  year={1991},
  publisher={Lehigh University}
}

@article{chen2018compact,
  title={Compact {K}{\"a}hler manifolds homotopic to negatively curved {R}iemannian manifolds},
  author={Chen, Bing-Long and Yang, Xiaokui},
  journal={Mathematische Annalen},
  volume={370},
  number={3},
  pages={1477--1489},
  year={2018},
  publisher={Springer}
}

@article{sullivan1976cycles,
  title={Cycles for the dynamical study of foliated manifolds and complex manifolds},
  author={Sullivan, Dennis},
  journal={Inventiones mathematicae},
  volume={36},
  number={1},
  pages={225--255},
  year={1976}
}

@misc{cheeger1970lower,
  title={A lower bound for the smallest eigenvalue of the {L}aplacian, {P}roblems in analysis ({P}apers dedicated to {S}alomon {B}ochner, 1969)},
  author={Cheeger, Jeff},
  year={1970},
  publisher={Princeton Univ. Press, Princeton, NJ}
}

@inproceedings{gromov1981hyperbolic,
  title={Hyperbolic manifolds, groups and actions},
  author={Gromov, Mikhael},
  booktitle={Riemann surfaces and related topics: Proceedings of the 1978 Stony Brook Conference (State Univ. New York, Stony Brook, NY, 1978)},
  volume={97},
  pages={183--213},
  year={1981}
}

@article{sikorav2001growth,
  title={Growth of a primitive of a differential form},
  author={Sikorav, Jean-Claude},
  journal={Bulletin de la Soci{\'e}t{\'e} Math{\'e}matique de France},
  volume={129},
  number={2},
  pages={159--168},
  year={2001}
}

@article{brooks1981fundamental,
  title={The fundamental group and the spectrum of the {L}aplacian},
  author={Brooks, Robert},
  journal={Commentarii Mathematici Helvetici},
  volume={56},
  number={1},
  pages={581--598},
  year={1981},
  publisher={Springer}
}

@article{marouani2023balanced,
  title={Balanced {H}yperbolic and {D}ivisorially {H}yperbolic {C}ompact {C}omplex {M}anifolds},
  author={Marouani, Samir and Popovici, Dan},
  journal={Mathematical Research Letters},
  volume={30},
  number={6},
  pages={1813--1855},
  year={2023}
}

@article{marouani2023skt,
  title={{SKT} hyperbolic and {G}auduchon hyperbolic compact complex manifolds},
  author={Marouani, Samir},
  journal={Algebraic and Geometric Methods of Analysis},
  pages={63},
  year={2023}
}

@article{haggui2023compact,
  title={Compact {C}omplex $p$-{K}{\"a}hler {H}yperbolic {M}anifolds},
  author={Haggui, Fathi and Marouani, Samir},
  journal={Complex Analysis and Operator Theory},
  volume={17},
  number={5},
  pages={58},
  year={2023},
  publisher={Springer}
}

@article{ma2024strongly,
  title={Strongly {G}auduchon {H}yperbolicity and {T}wo {O}ther {T}ypes of {H}yperbolicity},
  author={Ma, Yi},
  journal={arXiv preprint arXiv:2404.08830},
  year={2024}
}

@article{kasuya2025partially,
  title={Partially hyperbolic compact complex manifolds},
  author={Kasuya, Hisashi and Popovici, Dan},
  journal={Revista Matem{\'a}tica Iberoamericana},
  volume={41},
  number={5},
  pages={1795--1832},
  year={2025}
}

@article{otal2023six,
  author  = {Otal, Antonio and Ugarte, Luis},
  title   = {Six {D}imensional {H}omogeneous {S}paces with {H}olomorphically
             {T}rivial {C}anonical {B}undle},
  journal = {Journal of Geometry and Physics},
  volume  = {194},
  year    = {2023},
  pages   = {105014},
  note    = {27 pp. Addendum: Journal of Geometry and Physics,
             216 (2025), 105570, 5 pp.},
  doi     = {10.1016/j.geomphys.2023.105014},
  url     = {https://arxiv.org/abs/2305.02654}
}

@misc{kasuya2025higher,
  author        = {Kasuya, Hisashi and Popovici, Dan and Ugarte, Luis},
  title         = {Higher-{D}egree {H}olomorphic {C}ontact {S}tructures},
  year          = {2025},
  eprint        = {2502.01447v2},
  archivePrefix = {arXiv},
  primaryClass  = {math.DG},
  url           = {https://arxiv.org/abs/2502.01447v2}
}

@article{kasuya2025properties,
  author        = {Kasuya, Hisashi and Popovici, Dan and Ugarte, Luis},
  title         = {Properties of {H}olomorphic {$p$}-{C}ontact {M}anifolds},
  year          = {2025},
  eprint        = {2511.10818},
  journal = {arXiv},
  primaryClass  = {math.DG},
  url           = {https://arxiv.org/abs/2511.10818v2}
}

@article{khelifati2026deformations,
  title={Deformations of non-{K}{\"a}hler hyperbolicity notions and modifications of degenerate balanced manifolds},
  author={Khelifati, Abdelouahab},
  journal={International Journal of Mathematics},
  pages={2650068},
  year={2026},
  publisher={World Scientific}
}

@article{folner1955groups,
  title={On groups with full {B}anach mean value},
  author={F{\o}lner, Erling},
  journal={Mathematica Scandinavica},
  pages={243--254},
  year={1955},
  publisher={JSTOR}
}

@article{vonneumann1929allgemeinen,
  author  = {von Neumann, John},
  title   = {Zur allgemeinen {T}heorie des {M}a{\ss}es und des {I}ntegrals},
  journal = {Fundamenta Mathematicae},
  volume  = {13},
  year    = {1929},
  pages   = {73--116, 333}
}

@article{paun1997groupe,
  title={Sur le groupe fondamental des vari{\'e}t{\'e}s k{\"a}hl{\'e}riennes compactes {\`a} classe de {R}icci num{\'e}riquement effective},
  author={Paun, Mihai},
  journal={Comptes Rendus de l'Acad{\'e}mie des Sciences-Series I-Mathematics},
  volume={324},
  number={11},
  pages={1249--1254},
  year={1997},
  publisher={Elsevier}
}

@article{paun1998varietes,
  title={Sur les vari{\'e}t{\'e}s k{\"a}hl{\'e}riennes compactes {\`a} classe de {R}icci num{\'e}riquement effective},
  author={Paun, Mihai},
  journal={Bulletin des sciences mathematiques},
  volume={122},
  number={2},
  pages={83--92},
  year={1998},
  publisher={Elsevier}
}

@article{adelʹson1957banach,
  title={The {B}anach mean on groups},
  author={Adel{'}son-Velskii, Georgii Maksimovich and Shreider, Yulii Anatol{'}evich},
  journal={Uspekhi Matematicheskikh Nauk},
  volume={12},
  number={6},
  pages={131--136},
  year={1957},
  publisher={Russian Academy of Sciences, Steklov Mathematical Institute of Russian~…}
}

@article{campana1992connexite,
  author  = {Campana, Frédéric},
  title   = {Connexité rationnelle des variétés de {F}ano},
  journal = {Annales scientifiques de l'École Normale Supérieure},
  volume  = {25},
  number  = {5},
  pages   = {539--545},
  year    = {1992},
  doi     = {10.24033/asens.1658}
}

@article{kollar1992rationally,
  author  = {Koll{\'a}r, J{\'a}nos and Miyaoka, Yoichi and Mori, Shigefumi},
  title   = {Rationally {C}onnected {V}arieties},
  journal = {Journal of Algebraic Geometry},
  volume  = {1},
  pages   = {429--448},
  year    = {1992}
}

@article{beauville1983varietes,
  author  = {Beauville, Arnaud},
  title   = {Vari{\'e}t{\'e}s k{\"a}hleriennes dont la premi{\`e}re classe de {C}hern est nulle},
  journal = {Journal of Differential Geometry},
  volume  = {18},
  number  = {4},
  pages   = {755--782},
  year    = {1983}
}

@article{bogomolov1974decomposition,
  author  = {Bogomolov, F. A.},
  title   = {On the decomposition of {K}{\"a}hler manifolds with trivial canonical class},
  journal = {Mathematics of the USSR-Sbornik},
  volume  = {22},
  number  = {4},
  pages   = {580--583},
  year    = {1974}
}

@article{bogomolov1974kahler,
  title={K{\"a}hler manifolds with trivial canonical class},
  author={Bogomolov, Fedor A},
  journal={Mathematics of the USSR-Izvestiya},
  volume={8},
  number={1},
  pages={9--20},
  year={1974}
}

@article{kobayashi1981recent,
  title={Recent results in complex differential geometry. Jber. dt. Math},
  author={Kobayashi, Shoshichi},
  journal={Verein},
  volume={83},
  pages={147--158},
  year={1981}
}

@article{yau1978calabi,
  author  = {Yau, Shing-Tung},
  title   = {On the {R}icci curvature of a compact {K}{\"a}hler manifold
             and the complex {M}onge--{A}mp{\`e}re equation. {I}},
  journal = {Communications on Pure and Applied Mathematics},
  volume  = {31},
  number  = {3},
  pages   = {339--411},
  year    = {1978}
}

@article{wang1954complex,
  author  = {Wang, Hsien-Chung},
  title   = {Complex parallisable manifolds},
  journal = {Proceedings of the American Mathematical Society},
  volume  = {5},
  pages   = {771--776},
  year    = {1954}
}

@article{kodaira1964structure,
    author  = {Kodaira, Kunihiko},
    title   = {On the {S}tructure of {C}ompact {C}omplex {A}nalytic {S}urfaces, {I}},
    journal = {American Journal of Mathematics},
    volume  = {86},
    number  = {4},
    pages   = {751--798},
    year    = {1964},
    doi     = {10.2307/2373157}
}

@article{kodaira1966structure,
  author  = {Kodaira, Kunihiko},
  title   = {On the structure of compact complex analytic surfaces, {II}},
  journal = {American Journal of Mathematics},
  volume  = {88},
  number  = {3},
  pages   = {682--721},
  year    = {1966}
}

@book{barth2004compact,
    author    = {Barth, Wolf P. and Hulek, Klaus and Peters, Chris A. M. and Van de Ven, Antoni},
    title     = {Compact {C}omplex {S}urfaces},
    edition   = {2},
    series    = {Ergebnisse der Mathematik und ihrer Grenzgebiete. 3. Folge},
    volume    = {4},
    publisher = {Springer},
    address   = {Berlin},
    year      = {2004}
}

@article{lu1994complex,
    author  = {Lu, Peng and Tian, Gang},
    title   = {The {C}omplex {S}tructure on a {C}onnected {S}um of ${S}^3\times {S}^3$ with {T}rivial {C}anonical {B}undle},
    journal = {Mathematische Annalen},
    volume  = {298},
    number  = {4},
    pages   = {761--764},
    year    = {1994},
    doi     = {10.1007/BF01459760}
}

@article{friedman1986simultaneous,
  author  = {Friedman, Robert},
  title   = {Simultaneous resolution of threefold double points},
  journal = {Mathematische Annalen},
  volume  = {274},
  number  = {4},
  pages   = {671--689},
  year    = {1986},
  doi     = {10.1007/BF01458602}
}

@article{finepanov2010hyperbolic,
    author  = {Fine, Joel and Panov, Dmitri},
    title   = {Hyperbolic {G}eometry and {N}on-{K}\"ahler {M}anifolds with {T}rivial {C}anonical {B}undle},
    journal = {Geometry \& Topology},
    volume  = {14},
    number  = {3},
    pages   = {1723--1763},
    year    = {2010},
    doi     = {10.2140/gt.2010.14.1723}
}

@article{fei2016construction,
  title={A construction of non-{K}{\"a}hler {C}alabi--{Y}au manifolds and new solutions to the {S}trominger system},
  author={Fei, Teng},
  journal={Advances in Mathematics},
  volume={302},
  pages={529--550},
  year={2016},
  publisher={Elsevier}
}

@incollection{fei2018generalized,
  author    = {Fei, Teng},
  title     = {Generalized {Calabi--Gray} Geometry and Heterotic Superstrings},
  booktitle = {Proceedings of the International Consortium of Chinese Mathematicians, 2017: First Annual Meeting},
  pages     = {261--282},
  year      = {2020},
  publisher = {International Press},
  doi       = {10.48550/arXiv.1807.08737}
}

@article{fei2024remarks,
    author  = {Fei, Teng},
    title   = {Remarks on {G}eneralized {C}alabi--{G}ray {M}anifolds},
    journal = {International Journal of Mathematics},
    volume  = {37},
    number  = {5--6},
    eid     = {2442004},
    year    = {2026},
    doi     = {10.1142/S0129167X24420047}
}

@article{nakamura1975complex,
  author  = {Nakamura, Iku},
  title   = {Complex parallelisable manifolds and their small deformations},
  journal = {Journal of Differential Geometry},
  volume  = {10},
  number  = {1},
  pages   = {85--112},
  year    = {1975},
  doi     = {10.4310/jdg/1214432677}
}

@phdthesis{yachou1998varietes,
  title={Sur les vari{\'e}t{\'e}s semi-k{\"a}hl{\'e}riennes},
  author={Yachou, Abderrahim},
  year={1998},
  school={Lille 1}
}

@incollection{demailly2015structure,
  title={Structure theorems for compact {K}{\"a}hler manifolds with nef anticanonical bundles},
  author={Demailly, Jean-Pierre},
  booktitle={Complex Analysis and Geometry: KSCV10, Gyeongju, Korea, August 2014},
  pages={119--133},
  year={2015},
  publisher={Springer}
}

@article{kobayashi1976intrinsic,
  author  = {Kobayashi, Shoshichi},
  title   = {Intrinsic distances, measures and geometric function theory},
  journal = {Bulletin of the American Mathematical Society},
  volume  = {82},
  number  = {3},
  pages   = {357--416},
  year    = {1976},
  doi     = {10.1090/S0002-9904-1976-14018-9}
}

@article{kamenova2014kobayashi,
  author  = {Kamenova, Ljudmila and Lu, Steven and Verbitsky, Misha},
  title   = {Kobayashi pseudometric on hyperk{\"a}hler manifolds},
  journal = {Journal of the London Mathematical Society},
  volume  = {90},
  number  = {2},
  pages   = {436--450},
  year    = {2014},
  doi     = {10.1112/jlms/jdu038}
}

@article{kamenova2020kobayashi,
  author  = {Kamenova, Ljudmila and Vafa, Cumrun},
  title   = {Kobayashi non-hyperbolicity of {C}alabi--{Y}au manifolds
             via {M}irror {S}ymmetry},
  journal = {Communications in Mathematical Physics},
  volume  = {378},
  number  = {1},
  pages   = {329--334},
  year    = {2020},
  doi     = {10.1007/s00220-020-03719-y}
}

@article{brunnbauer2024atoroidal,
  title={On atoroidal and hyperbolic cohomology classes},
  author={Brunnbauer, M and Kotschick, D and Sch{\"o}nlinner, Lukas},
  journal={Topology and its Applications},
  volume={344},
  pages={108830},
  year={2024},
  publisher={Elsevier}
}

@article{borel1960density,
  author  = {Borel, Armand},
  title   = {Density {P}roperties for {C}ertain {S}ubgroups of {S}emi-{S}imple {G}roups {W}ithout {C}ompact {C}omponents},
  journal = {Annals of Mathematics},
  volume  = {72},
  number  = {1},
  year    = {1960},
  pages   = {179--188},
  doi     = {10.2307/1970150}
}

@article{tits1972free,
  author  = {Tits, Jacques},
  title   = {Free {S}ubgroups in {L}inear {G}roups},
  journal = {Journal of Algebra},
  volume  = {20},
  number  = {2},
  year    = {1972},
  pages   = {250--270},
  doi     = {10.1016/0021-8693(72)90058-0}
}

@article{milnor1968growth,
  author  = {Milnor, John},
  title   = {Growth of {F}initely {G}enerated {S}olvable {G}roups},
  journal = {Journal of Differential Geometry},
  volume  = {2},
  number  = {4},
  pages   = {447--449},
  year    = {1968}
}

@article{wolf1968growth,
  author  = {Wolf, Joseph A.},
  title   = {Growth of {F}initely {G}enerated {S}olvable {G}roups and {C}urvature of {R}iemannian {M}anifolds},
  journal = {Journal of Differential Geometry},
  volume  = {2},
  number  = {4},
  pages   = {421--446},
  year    = {1968},
  doi     = {10.4310/jdg/1214428658}
}

@article{popovici2015aeppli,
  title={Aeppli cohomology classes associated with {G}auduchon metrics on compact complex manifolds},
  author={Popovici, Dan},
  journal={Bulletin de la Soci{\'e}t{\'e} Math{\'e}matique de France},
  volume={143},
  number={4},
  pages={763--800},
  year={2015}
}

@article{calabi1958construction,
  author  = {Calabi, Eugenio},
  title   = {Construction and {P}roperties of {S}ome {S}ix-{D}imensional
             {A}lmost {C}omplex {M}anifolds},
  journal = {Transactions of the American Mathematical Society},
  volume  = {87},
  number  = {2},
  pages   = {407--438},
  year    = {1958},
  doi     = {10.2307/1993108}
}

@article{gray1969vector,
  author  = {Gray, Alfred},
  title   = {Vector {C}ross {P}roducts on {M}anifolds},
  journal = {Transactions of the American Mathematical Society},
  volume  = {141},
  pages   = {465--504},
  year    = {1969},
  doi     = {10.2307/1995115}
}

@article{Campana1992twistor,
  author  = {Campana, Fr{\'e}d{\'e}ric},
  title   = {An application of twistor theory to the non-hyperbolicity of certain compact symplectic {K}{\"a}hler manifolds},
  journal = {Journal f{\"u}r die reine und angewandte Mathematik (Crelles Journal)},
  volume  = {1992},
  number  = {425},
  pages   = {1--8},
  year    = {1992},
  doi     = {10.1515/crll.1992.425.1}
}

@misc{OAI-GSS-2026,
  author       = {{OpenAI}},
  title        = {Global {S}pherical {S}hells on {M}inimal {S}urfaces of {C}lass {VII}},
  year         = {2026},
  note         = {Preprint, dated September 24, 2026},
  url          = {https://github.com/openai/math/blob/main/preprints/Global-Spherical-Shells-on-Minimal-Surfaces-of-Class-VII-September-24-2026/paper.pdf}
}

@misc{OAI-Kobayashi-2026,
  author       = {{OpenAI}},
  title        = {Canonical {A}mpleness of {C}ompact {H}yperbolic {K}ähler {M}anifolds},
  year         = {2026},
  note         = {Preprint, dated September 23, 2026},
  url          = {https://github.com/openai/math/blob/main/preprints/Canonical-ampleness-of-compact-hyperbolic-Kahler-manifolds-September-23-2026/canonical-ampleness.pdf}
}

@misc{OpenAIAbelianitySpecialKahler2026,
  author       = {{OpenAI}},
  title        = {The abelianity conjecture for special compact {K}{\"a}hler manifolds},
  year         = {2026},
  howpublished = {OpenAI Math Release preprint},
  note         = {Released September 23, 2026},
  url          = {https://github.com/openai/math/blob/main/preprints/The-abelianity-conjecture-for-special-compact-Kahler-manifolds-September-23-2026/paper.pdf}
}

@article{bogomolov1976classification,
  author  = {Bogomolov, Fedor A.},
  title   = {Classification of surfaces of class {VII}$_0$ with {$b_2=0$}},
  journal = {Mathematics of the USSR-Izvestiya},
  volume  = {10},
  year    = {1976},
  pages   = {255--269}
}

@article{dloussky2003class,
  author  = {Dloussky, Georges and Oeljeklaus, Karl and Toma, Matei},
  title   = {Class {VII}$_0$ surfaces with $b_2$ curves},
  journal = {To appear in the relevant bibliography},
  year    = {2003}
}

@article{inoue1974surfaces,
  author  = {Inoue, Masahisa},
  title   = {On surfaces of class {VII}$_0$},
  journal = {Inventiones Mathematicae},
  volume  = {24},
  year    = {1974},
  pages   = {269--310}
}

@book{luck2002l2,
  author    = {Lück, Wolfgang},
  title     = {$L^2$-{I}nvariants: {T}heory and {A}pplications to {G}eometry and {$K$}-{T}heory},
  series    = {Ergebnisse der Mathematik und ihrer Grenzgebiete},
  volume    = {44},
  publisher = {Springer},
  address   = {Berlin},
  year      = {2002},
  doi       = {10.1007/978-3-662-04687-6}
}

@book{godbillon1971elements,
  author    = {Godbillon, Claude},
  title     = {Éléments de topologie algébrique},
  publisher = {Hermann},
  address   = {Paris},
  year      = {1971}
}

@article{kodaira1958deformations,
  author  = {Kodaira, Kunihiko and Spencer, Donald C.},
  title   = {On deformations of complex analytic structures. {I}},
  journal = {Annals of Mathematics},
  volume  = {67},
  number  = {2},
  pages   = {328--401},
  year    = {1958},
  doi     = {10.2307/1970009}
}

@article{rajan1994deformations,
  author  = {Rajan, C. S.},
  title   = {Deformations of complex structures on
             ${\Gamma}\backslash\mathrm{SL}_2(\mathbb{C})$},
  journal = {Proceedings of the Indian Academy of Sciences.
             Mathematical Sciences},
  volume  = {104},
  number  = {2},
  pages   = {389--395},
  year    = {1994},
  doi     = {10.1007/BF02863419}
}

@article{ghys1995deformations,
  author  = {Ghys, \'Etienne},
  title   = {D\'eformations des structures complexes sur les espaces
             homog\`enes de $\mathrm{SL}(2,\mathbb{C})$},
  journal = {Journal f\"ur die reine und angewandte Mathematik},
  volume  = {468},
  pages   = {113--138},
  year    = {1995}
}

@article{raghunathan1966vanishing,
  author  = {Raghunathan, M. S.},
  title   = {Vanishing theorems for cohomology groups associated
             to discrete subgroups of semisimple {L}ie groups},
  journal = {Osaka Journal of Mathematics},
  volume  = {3},
  number  = {2},
  pages   = {243--256},
  year    = {1966}
}

@article{agol2013virtual,
  title={The {V}irtual {H}aken {C}onjecture (with an appendix by {I}an {A}gol, {D}aniel {G}roves and {J}ason {M}anning)},
  author={Agol, Ian},
  journal={Documenta Mathematica},
  volume={18},
  pages={1045--1087},
  year={2013}
}

@article{jamin2024teichmuller,
  author  = {Jamin, Th\'eo},
  title   = {On the {T}eichm\"uller stack of compact quotients of
             $\mathrm{SL}_2(\mathbb{C})$},
  journal = {Geometriae Dedicata},
  volume  = {218},
  number  = {3},
  pages   = {66},
  year    = {2024},
  doi     = {10.1007/s10711-024-00916-0}
}

@article{biswas2016holomorphic,
  author  = {Biswas, Indranil and McKay, Benjamin},
  title   = {Holomorphic {C}artan geometries and rational curves},
  journal = {Complex Manifolds},
  volume  = {3},
  number  = {1},
  pages   = {145--168},
  year    = {2016},
  doi     = {10.1515/coma-2016-0004}
}

@article{biswas2022picard,
  author  = {Biswas, Pritthijit and Sankaran, Parameswaran},
  title   = {Picard {G}roups of {C}ertain {C}ompact {C}omplex {P}arallelizable {M}anifolds and {R}elated {S}paces},
  journal = {Bulletin des Sciences Math{\'e}matiques},
  volume  = {179},
  pages   = {103153},
  year    = {2022},
  doi     = {10.1016/j.bulsci.2022.103153}
}

@inproceedings{bogomolov1978hamiltonian,
  title={Hamiltonian {K}ähler manifolds},
  author={Bogomolov, Fedor A},
  booktitle={Soviet Math. Dokl.},
  volume={19},
  pages={1462--1465},
  year={1978}
}

@incollection{tian1987smoothness,
  title={Smoothness of the universal deformation space of compact {C}alabi--{Y}au manifolds and its {P}eterson-{W}eil metric},
  author={Tian, Gang},
  booktitle={Mathematical aspects of string theory},
  pages={629--646},
  year={1987},
  publisher={World Scientific}
}

@article{todorov1989weil,
  title={The {W}eil-{P}etersson geometry of the moduli space of $\mathrm{SU}(n\geq 3)$ ({C}alabi--{Y}au) manifolds {I}},
  author={Todorov, Andrey N},
  journal={Communications in mathematical physics},
  volume={126},
  number={2},
  pages={325--346},
  year={1989},
  publisher={Springer}
}

@article{meeks1990theory,
  author  = {Meeks, William H., III},
  title   = {The theory of triply periodic minimal surfaces},
  journal = {Indiana University Mathematics Journal},
  volume  = {39},
  number  = {3},
  pages   = {877--936},
  year    = {1990},
  doi     = {10.1512/iumj.1990.39.39043}
}

@article{li2026topological,
  title={Topological obstructions to geometric positivity and negativity on {C}alabi--{Y}au manifolds},
  author={Li, Ping},
  journal={arXiv preprint arXiv:2608.12705},
  year={2026}
}

@book{ueno1975classification,
  title     = {Classification {T}heory of {A}lgebraic {V}arieties and {C}ompact {C}omplex {S}paces},
  author    = {Ueno, Kenji},
  series    = {Lecture Notes in Mathematics},
  volume    = {439},
  publisher = {Springer-Verlag},
  address   = {Berlin},
  year      = {1975},
  doi       = {10.1007/BFb0070570}
}

@article{campana2004orbifolds,
  title={Orbifolds, special varieties and classification theory},
  author={Campana, Fr{\'e}d{\'e}ric},
  journal={Annales de l'institut Fourier},
  volume={54},
  number={3},
  pages={499--630},
  year={2004}
}

@article{campana2014abelianity,
  title={Abelianity conjecture for special compact {K}{\"a}hler $3$-folds},
  author={Campana, Fr{\'e}deric and Claudon, Beno{\^\i}t},
  journal={Proceedings of the Edinburgh Mathematical Society},
  volume={57},
  number={1},
  pages={55--78},
  year={2014},
  publisher={Cambridge University Press}
}

@article{popovici2019holomorphic,
  author  = {Popovici, Dan},
  title   = {Holomorphic Deformations of Balanced Calabi--Yau $\partial\bar\partial$-Manifolds},
  journal = {Annales de l'Institut Fourier},
  volume  = {69},
  number  = {2},
  pages   = {673--728},
  year    = {2019},
  doi     = {10.5802/aif.3254}
}

@article{ehresmann1947espaces,
  title={Sur les espaces fibr{\'e}s diff{\'e}rentiables},
  author={Ehresmann, Charles},
  journal={CR Acad. Sci. Paris},
  volume={224},
  number={1611-1612},
  pages={9},
  year={1947}
}

@article{carlson1989harmonic,
  title={Harmonic mappings of {K}{\"a}hler manifolds to locally symmetric spaces},
  author={Carlson, James A and Toledo, Domingo},
  journal={Publications Math{\'e}matiques de l'IH{\'E}S},
  volume={69},
  pages={173--201},
  year={1989}
}

@article{fino2015six,
  title={Six-dimensional solvmanifolds with holomorphically trivial canonical bundle},
  author={Fino, Anna and Otal, Antonio and Ugarte, Luis},
  journal={International Mathematics Research Notices},
  volume={2015},
  number={24},
  pages={13757--13799},
  year={2015},
  publisher={Oxford University Press}
}

@article{cattaneo2024lemma,
  title={$\partial\bar\partial$-lemma and $p$-{K}{\"a}hler structures on families of solvmanifolds},
  author={Cattaneo, Andrea and Tomassini, Adriano},
  journal={Mathematische Zeitschrift},
  volume={308},
  number={3},
  pages={56},
  year={2024},
  publisher={Springer}
}

@article{friedman2020lemma,
  title={The $\partial\bar\partial$-lemma for general {C}lemens manifolds},
  author={Friedman, Robert},
  journal={Pure and Applied Mathematics Quarterly},
  volume={15},
  number={4},
  pages={1001--1028},
  year={2020},
  publisher={International Press of Boston, Inc. Somerville, MA 02143, USA}
}

@article{angella2013lemma,
  author  = {Angella, Daniele and Tomassini, Adriano},
  title   = {On the $\partial\bar\partial$-Lemma and {B}ott--{C}hern cohomology},
  journal = {Inventiones Mathematicae},
  volume  = {192},
  number  = {1},
  pages   = {71--81},
  year    = {2013},
  doi     = {10.1007/s00222-012-0406-3}
}

@article{kasuya2023frolicher,
  title={Fr{\"o}licher spectral sequence and {H}odge structures on the cohomology of complex parallelisable manifolds},
  author={Kasuya, Hisashi and Stelzig, Jonas},
  journal={Transformation Groups},
  volume={28},
  number={4},
  pages={1595--1606},
  year={2023},
  publisher={Springer}
}

@article{deligne1975real,
  author  = {Deligne, Pierre and Griffiths, Phillip and Morgan, John and Sullivan, Dennis},
  title   = {Real homotopy theory of {K}\"ahler manifolds},
  journal = {Inventiones Mathematicae},
  volume  = {29},
  number  = {3},
  pages   = {245--274},
  year    = {1975},
  doi     = {10.1007/BF01389853}
}
\end{document}